\theoremstyle{plain}
\newtheorem{theorem}{Theorem}[section]
\newtheorem{lemma}[theorem]{Lemma}
\newtheorem{prop}[theorem]{Proposition}
\newtheorem{corollary}[theorem]{Corollary}
\newtheorem{example}[theorem]{Example}
\theoremstyle{definition}
\newtheorem{definition}[theorem]{Definition}
\newtheorem{assumption}[theorem]{Assumption}
\newtheorem{case}[theorem]{Case}
\theoremstyle{remark}
\newtheorem{remark}[theorem]{Remark}
\def\liminf{\mathop{\rm lim\,inf}\limits}
\def\limsup{\mathop{\rm lim\,sup}\limits}
\def\R{\mathbb{R}}
\def\E{\mathbb{E}}
\def\P{\mathbb{P}}
\def\eps{\varepsilon}
\def\param{\boldsymbol{\theta}}
\def\Param{\boldsymbol{\Theta}}
\definecolor{hancolor}{rgb}{0.1, 0.0, 0.9}
\DeclareMathOperator*{\argmin}{arg\,min}
\DeclarePairedDelimiter\norm{\lVert}{\rVert}
\newcommand{\cX}{\mathcal{X}}
\newcommand{\cF}{\mathcal{F}}
\def\1{\mathds{1}}
\newcommand\freefootnote[1]{%
  \let\thefootnote\relax%
  \footnotetext{#1}%
  \let\thefootnote\svthefootnote%
}
\definecolor{willcolor}{rgb}{0.9, 0.0, 0.1}
\icmltitlerunning{Stochastic Optimization with Arbitrary Recurrent Data Sampling}
\begin{document}

\twocolumn[
\icmltitle{Stochastic Optimization with Arbitrary Recurrent Data Sampling}



\icmlsetsymbol{equal}{*}

\begin{icmlauthorlist}
\icmlauthor{William Powell}{equal,uw}
\icmlauthor{Hanbaek Lyu}{equal,uw}
\end{icmlauthorlist}

\icmlaffiliation{uw}{Department of Mathematics, University of Wisconsin-Madison, WI, USA}

\icmlcorrespondingauthor{William Powell}{wgpowell@wisc.edu}
\icmlcorrespondingauthor{Hanbaek Lyu}{hlyu@math.wisc.edu}

\icmlkeywords{Machine Learning, ICML}

\vskip 0.3in
]



\printAffiliationsAndNotice{\icmlEqualContribution} 

\begin{abstract}
For obtaining optimal first-order convergence guarantees for stochastic optimization, it is necessary to use a recurrent data sampling algorithm that samples every data point with sufficient frequency. Most commonly used data sampling algorithms (e.g., i.i.d., MCMC, random reshuffling) are indeed recurrent under mild assumptions. In this work, we show that for a particular class of stochastic optimization algorithms, we do not need any further property (e.g., independence, exponential mixing, and reshuffling) beyond recurrence in data sampling to guarantee optimal rate of first-order convergence. Namely, using regularized versions of Minimization by Incremental Surrogate Optimization (MISO), we show that for non-convex and possibly non-smooth objective functions with constraints, the expected optimality gap converges at an optimal rate $O(n^{-1/2})$ under general recurrent sampling schemes. Furthermore, the implied constant depends explicitly on the `speed of recurrence', measured by the expected amount of time to visit a  data point, either averaged (`target time') or supremized (`hitting time') over the target 
locations. We discuss applications of our general framework to decentralized optimization and distributed non-negative matrix factorization. 
\end{abstract}

\section{Introduction}
In this paper we consider the minimization of a non-convex weighted finite-sum objective $f: \R^p \to \R$:
\begin{align}
    \param^* \in \argmin_{\param \in \Param} \Big\{f(\param):= \sum_{v \in \mathcal{V}} f^v(\param)\pi(v)\Big\}
    \label{eqn:main_problem}
\end{align}
where $\Param \subseteq \R^{p}$ is a convex, but not necessarily compact feasible set and $\param$ represents the parameters of a model to be optimized. Here $\mathcal{V}$ is a finite index set where one can view each index $v\in \mathcal{V}$ representing (a batch of) data that can be accessed at once. Then $f^v(\param)$ is the loss incurred using parameter $\param$ with respect to data at $v$, which is weighted by $\pi(v)\ge 0$ when forming the overall objective $f$ in \eqref{eqn:main_problem}. Without loss of generality, we assume the $\pi(v)$s sum to one.
When $\pi(v) \equiv \frac{1}{|\mathcal{V}|}$ the problem \eqref{eqn:main_problem} becomes the classical finite sum problem in the optimization literature. Instances of non-uniform $\pi$ arise when training a model with imbalanced data as has been studied in \cite{steininger2021density, wang2022imbanlanced, sow2024doubly}.

 We aim to solve this problem by developing an algorithm which produces iterative parameter updates $\param_n$ given only access to an \textit{arbitrary sequence of data samples} $(v_n)_{n \geq 1}$. In order to reach a first-order stationary point of \eqref{eqn:main_problem} for general objectives, it is necessary to use a sampling algorithm that is \textit{recurrent}, meaning that every data point is sampled infinitely often with `sufficient frequency'. Note that recurrence is satisfied by many common sampling schemes such as i.i.d. (independently and identically distributed) sampling, (irreducible) Markov Chain Monte-Carlo (MCMC), cyclic sampling \cite{bertsekas2011incremental}, and random-reshuffling  \cite{ying2017performance}. The main question we ask in this work is the following: 
\begin{description}
    \item $\bullet$ \textit{Is there any class of stochastic optimization algorithms for which recurrent sampling is enough to obtain optimal first-order convergence guarantee for \eqref{eqn:main_problem}?}
\end{description}
 In this paper, we show that for a class of suitable  extensions of stochastic optimization algorithms known as \emph{Minimization by Incremental Surrogate Optimization} (MISO) \cite{marial2015miso}, no additional property of a data sampling algorithm (e.g., independence, exponential mixing, reshuffling) other than recurrence is needed in order to guarantee convergence to first-order stationary points. Furthermore, we show that the rate of convergence depends crucially on the worst-case (over the initial locations) amount of times to reach a furthest target and/or an average target. 
 These correspond to the notion of `hitting time' and `random target time' in Markov chain theory, respectively.

With the original MISO algorithm in \cite{mairal2013stochastic}, even under the general recurrent data sampling, we are able to obtain asymptotically optimal iteration complexity if we can use strongly convex surrogate functions. However, there is a significant technical bottleneck in showing asymptotic convergence to stationary points, which was classically established in \cite{marial2015miso} in case of the i.i.d. data sampling. We find that using additional regularization helps with improving the convergence rate and allows us to prove asymptotic convergence to stationary points under arbitrary recurrent data sampling. For these reasons, we propose a slight extention of MISO that we call the \emph{Regularized Minimization by Incremental Surrogate Optimization} (RMISO), which takes the following form: 
\begin{description}
    \item[Step 1.] Sample $v_n$ according to a recurrent sampling algorithm
    \item[Step 2.] $g_n^{v_n} \leftarrow$ Convex majorizing surrogate of $f^{v_n}$ at $\param_{n-1}$; $g_n^v = g_{n-1}^v$ for $v \neq v_n$
    \item[Step 3.] $\bar{g}_n \leftarrow \sum_{v \in \mathcal{V}} g_n^v\pi(v)$; Compute
    \begin{align*}
        \param_n \in \argmin_{\param \in \Param} &\Big[ \bar{g}_n(\param) + \Psi(\norm{\param - \param_{n-1}}) \Big].
    \end{align*}
\end{description}

The algorithm maintains a list of majorizing surrogate functions for each data point $v$. At each step, a new data sample $v_{n}$ is drawn according to a recurrent data sampling algorithm. We then find a new majorizing convex surrogate $g_{n}^{v_{n}}$ that is tight at the current parameter $\param_{n-1}$. All other surrogates are unchanged. Then the new parameter $\param_{n}$ is found by minimizing the empirical mean of the current surrogates plus a regularization term $\Psi(\lVert \param-\param_{n-1} \rVert)$ that penalizes large values of $\norm{\param_n - \param_{n-1}}$. To handle dependent data, many algorithms use some form of projection or regularization to achieve this property \cite{lyu2023srmm, bhandari2018TDLearning, roy2023online}. This allows one to control the bias introduced by dependent sampling schemes as well as use the broader class of convex surrogates instead of requiring them to be strongly convex. The original MISO \cite{marial2015miso} is  recovered by omitting this regularization term. The particular choice of this term is crucial for the success of the analysis under the general recurrent data sampling setting. 

 Applications of our work include distributed optimization over networks 
 where $\mathcal{V}$ forms the vertex set of a connected graph $\mathcal{G} = (\mathcal{V}, \mathcal{E})$ and each vertex $v$ stores some data. Prior work \cite{JohanssonDistributed2010, Johansson2007ASP, ram2009incremental, Lopes2007incremental, Mao2020walkman, even2023stochastic, sun2022Adaptive} studies the performance of various optimization algorithms in this setting assuming the sequence $(v_n)_{n \geq 1}$ is a Markov chain 
 on the graph $\mathcal{G}$. Here $\pi$ is typically taken to be uniform and it is frequently assumed that the Markov chain is an MCMC sampling converging to $\pi$, see 
 \cite{sun2022Adaptive, JohanssonDistributed2010, wang2022stability}. 
 
  In this setting we find both theoretically and empirically that convergence of our algorithm can be accelerated by choosing sampling schemes that guarantee a higher frequency of visits to each $v \in \mathcal{V}$. Such schemes may be non-Markovian or not aperiodic and so not guaranteed to converge to a stationary distribution. Moreover, our analysis does not require $\pi$ to agree with the stationary measure of the sampling process if it exists. As remarked in \cite{even2023stochastic}, this additional flexibility may be advantageous as it allows one to opt for more efficient sampling schemes who's stationary measure may not agree with the data-weighting-distribution $\pi$. In our context, these are the schemes which minimize the measures of recurrence we define in the sequel.


\subsection{Contribution}\label{sec: contribution}

Our algorithms and analysis consider three cases which we briefly summarize in the following bullet points. 

\begin{itemize}
    \item[\textbullet] We show convergence rates of $O(n^{-1/2})$ for MISO with strongly convex surrogates or constant quadratic proximal regularization, matching the rate shown for SAG in \cite{even2023stochastic}. The implied constant depends on the potentially much smaller 'target time' rather than the hitting time.

    \item[\textbullet] The same convergence rates hold for MISO with dynamic quadratic proximal regularization where, inspired by the dynamic step size used for SAG in \cite{even2023stochastic}, the regularization parameter is adaptive to the state of the sampling process. Asymptotic convergence of stationarity measures in expectation is also proved.

    \item[\textbullet] Convergence rates of $O(n^{-1/2}\log n)$ are shown for MISO with diminishing search radius restriction, where averaged surrogates are minimized within a diminishing radius. We show almost sure convergence to stationarity for this method.

    \item[\textbullet]  We experimentally validate our results for the tasks of non-negative matrix factorization and logistic regression. We find that our method is robust to data heterogeneity as it produces stable iterate trajectories while still maintaining fast convergence (see Sec. \ref{sec: experiments}).
\end{itemize}

 \subsection{Related Work}

 MISO \cite{marial2015miso} was originally developed to solve finite sum problems under i.i.d sampling and proceeds by repeatedly minimizing a surrogate of the empirical loss function. In \cite{marial2015miso} it is shown that for MISO the expected objective optimality gap $\E[f(\param_n) - f(\param^*)]$ decays at rate $O(1/n)$ when the objective function is convex and exponentially fast when it is  strongly convex, just as batch gradient descent does \cite{Bottou2018optimization}. For non-convex $f$ it is shown that the iterates produced by MISO converge to the set of stationary points of $f$ over a convex constraint set, but no convergence rate analysis is given. Convergence rates for non-convex objectives were later provided for unconstrained problems in \cite{qian2019miso} where it was shown that the expected gradient norm $\E[\norm{\nabla f(\param_n)}]$ decays at rate $O(n^{-1/2})$. This rate was matched for the constrained setting in \cite{Karimi2022MISSO}. However, both papers only consider i.i.d sampling.  

(R)MISO may be compared with \emph{Stochastic Averaged Gradient} (SAG) \cite{schmidtSAG2017} as both store the most recent information computed using the data $v$ and output new parameter updates $\param_n$ depending on an average of this information over $\mathcal{V}$. Recently in \cite{even2023stochastic}, it was shown that for non-convex objectives SAG produces iterates such that the expected gradient norm decays at rate $O(n^{-1/2})$ under Markovian sampling. In comparison, the expected gradient norm converges at rate $O(n^{-1/4})$ for other stochastic first-order methods such as Stochastic Gradient Descent (SGD) \cite{SunOMCGD, AlacaogluNonconvex2023, even2023stochastic, karimi2019nonasymptotic}. Other works devoted to the study of first order optimization methods under Markovian sampling include \cite{beznosikov2023firstordermarkovnoise, bhandari2018TDLearning, wang2022stability, Xie2023BiasExtrap, lyu2023srmm}.


There has also been a recent focus on proving faster convergence for SGD using without-replacement sampling methods such as random-reshuffling \cite{Gurbuzbalaban2021reshufffling, ying2017performance}. This has been further extended to variance reduced algorithms \cite{huang2021improved, malinovsky2023rrwithvariancereduction, Beznosikov2023RRSARAH} and distributed optimization \cite{mishchenko2022FedRR, horvath2022fedshuffle}. New sampling algorithms that aim to improve over random reshuffling have been suggested in \cite{rajput2022permutationbased, lu2022grab, Mohtashami2022characterizing}. In particular, in \cite{lu2022exampleselection} the authors show that the convergence of SGD can be accelerated provided a certain concentration inequality holds and propose leveraging this using a greedy sample selection strategy.

 To obtain our results, we adopt a new analytical approach which is inspired in part by the analysis of SAG in \cite{even2023stochastic}. This strategy differs significantly from mixing rate arguments used in the analysis of stochastic optimization methods with Markovian data (e.g \cite{SunOMCGD, bhandari2018TDLearning, nagaraj2020least, lyu2020online, lyu2022online, lyu2023srmm, AlacaogluNonconvex2023}). We give a short sketch of our proofs in Section \ref{sec: sketch of proofs} and a brief overview of mixing rate techniques and the challenges of adapting them to the analysis of MISO in Appendix \ref{sec: mixing time analysis}. We believe that these techniques may be of interest in their own right and may further contribute to analyzing other stochastic optimization methods with recurrent data sampling.

\subsection{Notation}
In this paper, we let $\R^p$ denote the ambient space for the parameter space $\Param$ equipped with the standard inner product $\langle \cdot, \cdot \rangle$ and the induced Euclidean norm $\norm{\cdot}$. For $\param \in \R^{p}$ and $\eps > 0$, we let $B_{\eps}(\param)$ represent the closed Euclidean ball of radius $\eps$ centered at $\param$. We let $\1(A)$ be the indicator function of an event $A$ which takes value $1$ on $A$ and $0$ on $A^c$. We denote $\pi_{\text{min}} = \min_{v \in \mathcal{V}} \pi(v)$. We let $a \wedge b = \min\{a, b\}$ for real numbers $a$ and $b$. For a set $\cX$ we let $|\cX|$ denote its cardinality. 


\section{Preliminary Definitions and Algorithm Statement}\label{sec: algorithm statement}

In this section we state the two main algorithms used to solve \eqref{eqn:main_problem}. To do this we start by defining first-order surrogate functions and then define a few random variables that will be important in both implementation and analysis. First-order surrogates are defined by

\begin{definition}[First-order surrogates]\label{def: surrogates}
\normalfont
A convex function $g : \R^p \to \R$ is a first-order surrogate function of $f$ at $\param$ if 
\vspace{-0.2cm} 
\begin{itemize}
    \item[(i)] $g(\param') \geq f(\param')$ holds for all $\param' \in \Param$
    \item[(ii)] the approximation error $h:= g - f$ is differentiable and $\nabla h$ is $L$-Lipschitz continuous for some $L > 0$; moreover $h(\param) = 0$ and $\nabla h(\param) = 0$.
\end{itemize}
We denote by $\mathcal{S}_{L}(f, \param)$ the set of all first order surrogates of $f$ at $\param$ such that $\nabla h$ is $L$-Lipschitz. We further define $\mathcal{S}_{L, \mu}(f, \param)$ to be the set of all surrogates $g \in \mathcal{S}_{L}(f, \param)$ such that $g$ is $\mu$-strongly convex.
\end{definition}

Certain properties of the data sampling process are crucial in our analysis, especially in proving Lemmas \ref{lem: conv_proof_surrogate_error_grad_sum}, \ref{lem: gap sum dynamic prox reg}, and \ref{lem: gap sum dr}. Below we define the \emph{return time} and \emph{last passage time}. 
\begin{definition}[Return time]\label{def: return time}
\normalfont
    For $n \geq 0$ and $v \in \mathcal{V}$, the time to return to data $v$ starting from time $n$ is defined as 
    \begin{align}
        \tau_{n, v} = \inf\{j \geq 1: v_{n + j} = v\}.
    \end{align}
\end{definition}
\noindent
That is, $\tau_{n, v}$ is the amount of time which one has to wait after time $n$ for the process to return to $v$. The return time may be viewed as a generalization of the return times of a Markov chain. Indeed, $\tau_{0, v} = \inf\{n \geq 1: v_n = v\}$ agrees with the classical notion of return time from the Markov chain literature. This is closely related to the last passage time defined below.

\begin{definition}[Last passage time]\label{def: passage times}
\normalfont
    For $n \geq 1$ and $v \in \mathcal{V}$ we define the \textit{last passage time} of $v$ before time $n$ as
    \begin{align}
        k^v(n) = \sup\{j \leq n: v_j = v\}.
    \end{align}
    If the process has not yet visited $v$, i.e. $\{j \leq n : v_j = v\} = \emptyset$, then we set $k^v(n) = 1$.
\end{definition}


\begin{algorithm}[ht]
    \small
    \caption{Incremental Majorization Minimization with Dynamic Proximal Regularization}
    \label{RMISO}
    \begin{algorithmic}[1]
    \STATE \textbf{Input:} Initialize $\param_0 \in \Param$; $N > 0$; $\rho \geq 0$
    \STATE \textbf{Option: } $Regularization \in \{Dynamic, Constant\}$
    \STATE  Initialize surrogates $g_0^v \in \mathcal{S}_{L, \mu}(f^v, \param_0)$. 
    \FOR {$n = 1$ {\bfseries to} $N$}
    \STATE sample a data point $v_n$
    \STATE choose $g_n^{v_n} \in \mathcal{S}_{L, \mu}(f^{v_n}, \param_{n-1})$; $g_n^v = g_{n-1}^v \hspace{0.1cm} \forall v \neq v_n$
    \STATE $\bar{g}_n \leftarrow \sum_{v \in \mathcal{V}} g_n^v \pi(v)$
    \IF{$Regularization=Dynamic$}
    \STATE $\rho_n \leftarrow \rho + \max_{v \in \mathcal{V}} (n - k^v(n))$
    \ELSIF{$Regularization=Constant$}
    \STATE $\rho_n \leftarrow \rho$
    \ENDIF
    \STATE $\param_n \leftarrow \argmin_{\param \in \Param} \Big[\bar{g}_n(\param) + \frac{\rho_n}{2}\norm{\param - \param_{n-1}}^2\Big]$
    \ENDFOR
    \STATE \textbf{output:} $\param_N$
    \end{algorithmic}
\end{algorithm}

\begin{algorithm}[ht]
    \small
    \caption{Incremental Majorization Minimization with Diminishing Radius}
    \label{RMISO_DR}
    \begin{algorithmic}[1]
    \STATE \textbf{Input:} Initialize $\param_0 \in \Param$; $N > 0$; $(r_n)_{n \geq 1}$
    \STATE Initialize surrogates $g_0^{v} \in \mathcal{S}_{L}(f^{v}, \param_0)$. 
    \FOR{$n = 1$ {\bfseries to} $N$}
    \STATE sample a data point $v_n$
    \STATE choose $g_n^{v_n} \in \mathcal{S}_{L}(f^{v_n}, \param_{n-1})$; $g_n^v = g_{n-1}^v$ for all $v \neq v_n$
    \STATE $\bar{g}_n \leftarrow \sum_{v \in \mathcal{V}} g_n^v\pi(v)$
    \STATE $\param_n \leftarrow \argmin_{\param \in \Param \cap B_{r_n}(\param_{n-1})} \bar{g}_n(\param)$
    \ENDFOR
    \STATE \textbf{output:} $\param_N$
    \end{algorithmic}
\end{algorithm}

The last passage time $k^v(n)$ appears naturally as it is the last time the surrogate for data point $v$ has been updated during the execution of either Algorithm \ref{RMISO} or \ref{RMISO_DR}. Thus, $g_n^v$ is a surrogate of $f^v$ at $\param_{k^v(n) - 1}$ and the corresponding surrogate error at this point $h_n^v(\param_{k^v(n) - 1})$ and its gradient are equal to zero. We will use this fact crucially in the proof of the key lemma, Lemma \ref{lem: conv_proof_surrogate_error_grad_sum}. 


Our algorithms are stated formally in Algorithms \ref{RMISO} and \ref{RMISO_DR}. In Algorithm \ref{RMISO}, the regularization term uses \emph{Proximal Regularization} (PR) while Algorithm \ref{RMISO_DR} utilizes a \emph{Diminishing Radius} (DR) restriction. 


\section{Main Results}\label{sec: main_results}

\subsection{Optimality Conditions}
We now introduce the optimality conditions used in this paper and related quantities. Here we denote $f$ to be a general objective function $f: \Param \to \R$, but elsewhere $f$ will refer to the objective function in \eqref{eqn:main_problem} unless otherwise stated. 

For a given function $f$ and $\param^{*}, \param \in \Param$, we define its \emph{directional derivative} at $\param^{*}$ in the direction $\param - \param^{*}$ as
{\small
\begin{align}
    \nabla f(\param^{*}, \param - \param^{*}) := \lim_{\alpha \to 0^{+}} \frac{ f(\param^{*} + \alpha(\param - \param^{*})) - f(\param^{*})}{\alpha}
\end{align}
}

\vspace{-0.1cm}
\noindent A necessary first order condition for $\param^*$ to be a local minimum of $f$ is to require $\nabla f(\param^{*}, \param - \param^{*}) \geq 0$ for all $\param \in \Param$ (see \cite{marial2015miso}). Thus we define the optimality of $f$ at $\param^* \in \Param$ as 
\begin{align}
    O_f(\param^*) := \sup_{\param \in \Param, \norm{\param - \param^*} \leq 1} - \nabla f(\param^*, \param - \param^*).
\end{align}
Note that $O_f(\param^*)$ is non-negative (since we may take $\param = \param^*$) and only positive if there exists some $\param \in \Param$ with $\nabla f(\param^{*}, \param - \param^{*}) < 0$.
 Thus we say that $\param^{*} \in \Param$ is a \emph{stationary point} of $f$ over $\Param$ if $O_f(\param^*) = 0$.
If $f$ is differentiable and $\Param$ is convex, this is equivalent to $-\nabla f(\param^*)$ being in the normal cone of $\Param$ at $\param^*$. If $\param^*$ is in the interior of $\Param$ then it implies that $\norm{\nabla f(\param^*)} = 0$.

For iterative algorithms, this stationary point condition may hardly be satisfied in a finite number of iterations. A practically important question is how the worst case number of iterations required to achieve an $\eps$-approximate solution scales with the desired precision $\eps$. We say that $\param^* \in \Param$ is an $\eps$-\emph{approximate stationary point} of $f$ over $\Param$ if $O_f(\param^*) \leq \eps$.
This notion of $\eps$-approximate solution is consistent with the corresponding notion for unconstrained problems. In fact, if $f$ is differentiable, and if $\param^{*}$ is distance at least one away from the boundary $\partial \Param$, then it reduces to $\norm{\nabla f(\param^*)} \leq \eps$. For each $\eps > 0$, we then define the \emph{worst-case iteration complexity} of an algorithm for solving \eqref{eqn:main_problem} as 
\begin{align}
    N_{\eps}(\param_0):= \inf\{n \geq 1 : O_f(\param_n) \leq \eps\},
\end{align}
where $(\param_n)_{n \geq 0}$ is a sequence of iterates produced by the algorithm with initial estimate $\param_0$. 

\subsection{Assumptions}\label{sec: assumptions}
In this subsection, we state our assumptions for establishing the main results. Throughout this paper, we denote by $\cF_n$ the $\sigma$-algebra generated by the samples $v_1, \hdots, v_n$ and the parameters $\param_0, \hdots, \param_n$ produced by Algorithm \ref{RMISO} or \ref{RMISO_DR}. With this definition, $(\cF_n)_{n \geq 1}$ defines a filtration. 

In what follows we will also define some important quantities in terms of the measure theoretic definition of the $L_{\infty}$ norm for random variables:  
\begin{align}
    \norm{X}_{\infty} =  \inf \{ t > 0 : \P(|X| > t) = 0\}.
\end{align}
This is due to the technical consideration that the conditional expectation $\E[\tau_{n, v} | \cF_n]$ is random and hence so is $\sup_{n \geq 1} \E[\tau_{n, v} | \cF_n]$. Our analysis requires this supremum to be bounded by a non-random constant, while in the fully general case $\sup_{n \geq 1} \E[\tau_{n, v} | \cF_n]$ may be an unbounded. 

We first state our main assumption on the sampling scheme.

\begin{assumption}[Recurrent data sampling]\label{assumption: process properties}
\normalfont
    The sequence $(v_n)_{n \geq 1}$ of data samples defines a stochastic process which satisfies the following property: for each $v \in \mathcal{V}$, $ \sup_{n \geq 1} \left \lVert \E[\tau_{n, v} | \cF_n] \right \rVert_{\infty}  <  \infty$, i.e., the expected return time conditioned on $\cF_n$ is uniformly bounded.
\end{assumption}

Assumption \ref{assumption: process properties} states that the data $(v_n)_{n \geq 1}$ are sampled in such a way that the expected time between visits to a particular data point is finite and uniformly bounded. Generalizing the notion of positive recurrence in Markov chain theory, we say a sampling algorithm is \emph{recurrent} if Assumption \ref{assumption: process properties} is satisfied. We emphasize that recurrence is the \textit{only} requirement we make of the sampling process in order to prove the convergence rate guarantees in Theorem \ref{thm: convergence rates to stationarity} and the asymptotic convergence in Theorem \ref{thm: a.s. convergence} (We do not assume independence or Markovian dependence, etc.). We include below a list of some commonly used recurrent sampling algorithms.

\begin{description}
    \item[1.] (\textit{i.i.d. sampling}) Sampling data i.i.d from a fixed distribution is the most common assumption in the literature \cite{mairal2013stochastic, marial2015miso, Bottou2018optimization, schmidtSAG2017, Rie2013svrg}. Suppose we sample $v_n$ i.i.d from some distribution $\gamma$ on $\mathcal{V}$. Then the $\tau_{n, v}$ are independent geometric random variables taking values from $\{1,2,\dots,\}$ with success probability $\gamma(v)$, so $\E[\tau_{n, v} |\cF_n] = 1/\gamma(v)$. In particular, if $\gamma$ is uniform $\E[\tau_{n, v} |\cF_n] = |\mathcal{V}|$ for all $n$ and $v$. 
    \item[2.] (\textit{MCMC}) Markov chain Monte Carlo methods (see e.g. Ch.3 of \cite{levin2017markov}) produce a Markov chain $(v_n)$ on $\mathcal{V}$. If this chain is irreducible then $\max_{v, w} \E_w[\tau_{0, v}]$ is finite \cite{levin2017markov}. 
    For any $n$, $v$, and initial distribution $\nu$, the Markov property implies $\E_{\nu}[\tau_{n, v} |\cF_n] = \E_{v_n}[\tau_{0, v}]$. So any irreducible Markov chain satisfies \ref{assumption: process properties}. 
    
    \item[3.] (\textit{Cyclic sampling}) In cyclic sampling one samples data in order according to some enumeration until the dataset is exhausted. This process is then repeated until convergence. The authors of \cite{lu2022exampleselection} show that iteration complexity for SGD can be improved from $O(\eps^{-4})$ to $O(\eps^{-3})$ using such methods.
    To see that \ref{assumption: process properties} holds in this setting, we simply notice that $\tau_{n, v} \leq |\mathcal{V}|$ for all $n$ and $v$. 
    
    \item[4.] (\textit{Reshuffling}) Reshuffling is similar to cycling sampling except that the dataset is randomly permuted at the beginning of each epoch \cite{lu2022exampleselection}. It was observed empirically that random reshuffling performs better that i.i.d sampling in \cite{Bottou2012sgdtricks} and further studied in \cite{Gurbuzbalaban2021reshufffling, lu2022exampleselection}. The authors of \cite{lu2022exampleselection} show the same improvement in iteration complexity for SGD as for cyclic sampling. 
    In this case, \ref{assumption: process properties} is satisfied since $\tau_{n, v} \leq 2|\mathcal{V}|$. 
\end{description}

In Markov chain theory, the quantity $\max_{v, w} \E_w[\tau_{0,v}]$ is commonly denoted $t_{\text{hit}}$. Adapting this notion, we define
\begin{align}
    t_{\text{hit}}:= \max_{v \in \mathcal{V}} \, \sup_{n \geq 1} \,\, \left \lVert \E[\tau_{n, v} |\cF_n] \right \rVert_{\infty}
    \label{eq: thit}
\end{align}
for each $v$ when \ref{assumption: process properties} holds, for general sampling schemes. 

Continuing the connection with Markov chains, we also let
\begin{align}
    t_{\odot} := \sup_{n \geq 1} \,\,  \left \lVert \sum_{v \in \mathcal{V}} \E[\tau_{n, v} |\cF_n] \pi(v) \right \rVert_{\infty}
    \label{eq: target time}
\end{align}
where $\pi$ is as in \eqref{eqn:main_problem}. Note that if $v_n$ is an irreducible Markov chain then the Markov property implies 
\begin{align}
    t_{\odot} = \max_{w \in \mathcal{V}} \sum_{v \in \mathcal{V}} \E_w[\tau_{0, v}]\pi(v).
    \label{eq: target time plus}
\end{align}
This is closely related to the \emph{target time} define by 
\begin{align}
    t_{\odot}^w = \sum_{v \in V} \E_w[\tau_v]\pi(v)
    \label{eq: targ_time markov}
\end{align}
with the difference being that here $\tau_v = \inf\{n \geq 0 : v_n = v\}$ is the first hitting time of $v$ rather than the first return time to $v$. The \emph{random target lemma} (Lemma 10.1 in \cite{levin2017markov}) states that if $\pi$ is the stationary distribution of the Markov chain, then the target time is independent of the starting state $w$. In this case, the quantities \eqref{eq: target time plus} and \eqref{eq: targ_time markov} only differ by one. This can be seen by first noting that $\E_{w}[\tau_v] = \E_w[\tau_{0, v}]$ if $v \neq w$. This leaves only the difference $\E_w[\tau_{0, w}]\pi(w) - \E_w[\tau_w]\pi(w)$. The second term is equal to zero and the first equals one since if $\pi$ is the unique stationary distribution for the chain, $\pi(w) = \frac{1}{\E_w[\tau_{0,w}]}$ \cite{levin2017markov}.

\begin{figure}[h!]
\centering
\resizebox{0.23\textwidth}{!}{
\begin{tikzpicture}
\graph[simple, nodes={circle, draw, fill=black}, empty nodes] 
{ subgraph K_n [n=8,clockwise,radius=2cm];
10[x=4, y=0] -- 3
};
\end{tikzpicture}
}
\caption{Lonely graph}
\label{fig: lonely_node_graph}
\end{figure}
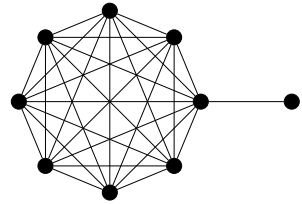

For transitive irreducible Markov chains, $t_{\text{hit}}$ and $t_{\odot}$ are comparable \cite{levin2017markov}.  However, in other situations $t_{\odot}$ may be much smaller that $t_{\text{hit}}$. 
For instance, consider the simple random walk on the graph in Figure \ref{fig: lonely_node_graph}, which we call the `lonely' graph, and let $\pi$ be its stationary distribution. In the lonely graph, $|\mathcal{V}| - 1$ vertices form a clique and the remaining vertex has degree one. A random walk on this graph has worst case hitting time $t_{\text{hit}} = O(|\mathcal{V}|^2)$: its value is tied to the lonely vertex with degree one which has low probability of being visited. On the other hand $t_{\odot}$ is only $O(|\mathcal{V}|)$ since it only depends on an \emph{average} hitting time instead of the worst case. See Example 10.4 in \cite{levin2017markov} for more details. 

We again emphasize that in our general setting, we do not require $(v_n)$ to be a Markov chain nor do we require $\pi$ to be a stationary measure for the process. The above analysis serves to motivate the definition given in \eqref{eq: target time} and provide an example of where $t_{\odot}$ may be much smaller than $t_{\text{hit}}$.

We next list some assumptions of the functions $f^v$. 
\begin{assumption}[Lower-bounded objective and directional derivatives]\label{assumption: directional derivatives}
\normalfont
    For all $v \in \mathcal{V}$ and $\param, \param' \in \Param$, the function $f^v$ is bounded below, i.e. $\inf_{\param \in \Param} f^v(\param) > -\infty$. Moreover, the directional derivative $\nabla f^v(\param, \param' - \param)$ exists.
\end{assumption}
Assumption \ref{assumption: directional derivatives} implies that the objective $f$ is bounded below. For the remainder of this paper we will denote $\Delta_0 := \bar{g}_0(\param_0) - \inf_{\param \in \Param} f(\param)$. It is important to note that if the initial surrogates $g_0^v$ are in $\mathcal{S}_{L}(f^v, \param_0)$ (as is the case in both Algorithms \ref{RMISO} and \ref{RMISO_DR}) then $g_0(\param_0) = f(\param_0)$ so $\Delta_0 = f(\param_0) - \inf_{\param \in \Param} f(\param)$. The regularity assumption in \ref{assumption: directional derivatives} was used in \cite{marial2015miso} and is necessary in analyzing our algorithms using our definition of approximate stationarity.  For Algorithm \ref{RMISO_DR} we make the following stronger but common assumption which is crucial to our analysis:
\begin{assumption}
\label{assumption: Lsmooth}
\normalfont
    For each $v \in \mathcal{V}$, the function $f^v$ is continuously differentiable and $\nabla f^v$ is $L$-Lipschitz continuous.
\end{assumption}
\noindent
For simplicity, we assume that if Assumption \ref{assumption: Lsmooth} holds then the Lipschitz constant of $f^v$ agrees with that of the corresponding approximation error $h^v$. 

Finally, Assumption \ref{assumption: sequences} states that the radii in Algorithm \ref{RMISO_DR} decrease slowly, but not too slowly. This is analogous to square summability of step sizes in gradient descent. 

\begin{assumption}[Square-summable and non-summable radii]\label{assumption: sequences}
\normalfont
    The sequence $(r_n)_{n \geq 1}$ is non-increasing, $\sum_{n = 1}^{\infty} r_n = \infty$, and $\sum_{n = 1}^{\infty} r_n^2 < \infty$. 
\end{assumption}

\subsection{Statement of main results}


In this section we state the two main results of this work. We consider the following three cases corresponding to the three variants of our main algorithm:

\begin{case}\label{case: prox_regularization-iid}
    Assumptions \ref{assumption: process properties}-\ref{assumption: directional derivatives} hold. Use Algorithm \ref{RMISO} with $Regularization Schedule = Constant$. 
\end{case}


\begin{case}\label{case: prox_regularization-markovian} 
    Assumptions \ref{assumption: process properties}-\ref{assumption: directional derivatives} hold. Use Algorithm \ref{RMISO} with $Regularization Schedule = Dynamic$.
\end{case}

\begin{case}\label{case: diminishing_radius-markovian}
    Assumptions \ref{assumption: process properties}-\ref{assumption: sequences} hold. Use Algorithm \ref{RMISO_DR}.
\end{case}

Notice that in Case \ref{case: prox_regularization-iid}, if one chooses $\rho = 0$ and the surrogates are $g_n^v$ are in $\mathcal{S}_{L, \mu}(f^v, \param_{n-1})$ for some $\mu > 0$, then Algorithm \ref{RMISO} reduces to the classical MISO algorithm in \cite{marial2015miso}. 

Our first main result, Theorem \ref{thm: convergence rates to stationarity}, gives worst case upper-bounds on the expected rate of convergence to optimality. For each of the cases \ref{case: prox_regularization-iid}-\ref{case: diminishing_radius-markovian} we give rates of convergence for the objective function $f$. 

\begin{theorem}[Rate of Convergence to Stationarity]\label{thm: convergence rates to stationarity}
Algorithms \ref{RMISO} and \ref{RMISO_DR} satisfy the following for any $N \geq 1$:

\vspace{-0.2cm}
\begin{enumerate}[label=(\roman{*}), itemsep=-0.1cm]
\item \label{item: conv to stationarity-miso}
Assume Case \ref{case: prox_regularization-iid}. Further assume $\rho = 0$ and Algorithm \ref{RMISO} is run with $\mu$-strongly convex surrogates. Then
\begin{align}
    \min_{1 \leq n \leq N} \E[O_f(\param_n)] \leq \frac{Lt_{\odot}\sqrt{\frac{2\Delta_0}{\mu}}}{\sqrt{N}}
\end{align}
\item \label{item: conv to stationarity- alg 2 iid} Assume Case \ref{case: prox_regularization-iid}. If  $\rho \leq Lt_{\odot} \leq \rho + \mu$ then
\begin{align}
    \min_{1 \leq n \leq N} \E \left[ O_f(\param_n) \right] \leq 2\sqrt{\frac{2\Delta_0Lt_{\odot}}{N}}.
\end{align}

\item \label{item: conv to stationarity- alg 2} Assume Case \ref{case: prox_regularization-markovian}. If $\rho \leq Lt_{\odot} \leq \rho + \mu$ then 
\begin{multline}
    \min_{1 \leq n \leq N} \E \left[ O_f(\param_n) \right]\\
    \leq 2\sqrt{\frac{ 2\Delta_0(Lt_{\odot} + (2 t_{\text{hit}} + 1)\log_2 (4|\mathcal{V}|))}{N}}.
\end{multline}

\item \label{item: conv to stationarity- alg 1} Assume Case \ref{case: diminishing_radius-markovian}. Let $C_N = \sum_{n = 1}^N r_n^2$. Then
\begin{multline}
\hspace{-0.3cm} \min_{1 \leq n \leq N} \E \left[O_f(\param_n) \right]\\
\leq \frac{\Delta_0 + \sqrt{\frac{2L}{\pi_{\text{min}}}C_N\Delta_0} + \Big(3 + t_{\odot}\Big)C_NL}{\sum_{n = 1}^N (1 \wedge r_{n+1})}.
\end{multline}

\end{enumerate}
\end{theorem}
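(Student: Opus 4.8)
The plan is to run a descent argument on the Lyapunov sequence $\Lambda_n := \bar{g}_n(\param_n)$ and then turn the accumulated descent into a bound on a weighted average of the optimality gaps. First I would decompose each one-step change $\Lambda_{n-1}-\Lambda_n$ into two non-negative pieces. Because $g_n^{v_n}$ is tight at $\param_{n-1}$ while $g_{n-1}^{v_n}$ majorizes $f^{v_n}$, the surrogate update lowers the averaged surrogate by $h_{n-1}^{v_n}(\param_{n-1})\pi(v_n)\ge 0$, and because $\param_{n-1}$ is feasible for the step-$n$ minimization over $\Param\cap B_{r_n}(\param_{n-1})$, the minimization lowers it by $\bar{g}_n(\param_{n-1})-\bar{g}_n(\param_n)\ge 0$. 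Since $\bar{g}_n\ge f\ge\inf_\param f$ and $\Lambda_0=f(\param_0)$, telescoping gives the two summable quantities I will need: $\sum_{n}(\bar{g}_{n+1}(\param_n)-\bar{g}_{n+1}(\param_{n+1}))\le\Delta_0$ and $\sum_{n}h_n^{v_{n+1}}(\param_n)\pi(v_{n+1})\le\Delta_0$. The latter, combined with $\pi(v)\ge\pi_{\text{min}}$, controls the total sampled surrogate error by $\Delta_0/\pi_{\text{min}}$, which is where the $\pi_{\text{min}}$ in the numerator will come from.

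Next I would establish a per-step upper bound on $O_f(\param_n)$ using the step-$(n+1)$ minimization, which is centered exactly at $\param_n$. Under Assumption \ref{assumption: Lsmooth} everything is differentiable, so I can write $-\nabla f(\param_n,\param-\param_n)=-\langle\nabla\bar{g}_{n+1}(\param_n),\param-\param_n\rangle+\langle\nabla\bar{h}_{n+1}(\param_n),\param-\param_n\rangle$, the second term being at most $\norm{\nabla\bar{h}_{n+1}(\param_n)}$ for $\norm{\param-\param_n}\le 1$. To handle the ball restriction, for any feasible $\param$ with $\norm{\param-\param_n}\le 1$ I would rescale to $\param'=\param_n+s(\param-\param_n)$ with $s=1\wedge r_{n+1}$, which lands in $\Param\cap B_{r_{n+1}}(\param_n)$; applying the $2L$-smoothness upper bound of $\bar{g}_{n+1}$ together with optimality of $\param_{n+1}$ yields $\langle-\nabla\bar{g}_{n+1}(\param_n),\param-\param_n\rangle\le\frac{1}{1\wedge r_{n+1}}(\bar{g}_{n+1}(\param_n)-\bar{g}_{n+1}(\param_{n+1}))+L(1\wedge r_{n+1})$. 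This produces exactly the weight $1\wedge r_{n+1}$ and gives $O_f(\param_n)\le\frac{1}{1\wedge r_{n+1}}(\bar{g}_{n+1}(\param_n)-\bar{g}_{n+1}(\param_{n+1}))+L(1\wedge r_{n+1})+\norm{\nabla\bar{h}_{n+1}(\param_n)}$.

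I would then multiply by $(1\wedge r_{n+1})$, sum over $n$, and take expectations. The minimization-drop term telescopes to $\Delta_0$; the term $\sum_n L(1\wedge r_{n+1})^2\le LC_N$ contributes part of the $(3+t_\odot)C_NL$ piece. The surrogate-error sum $\sum_n(1\wedge r_{n+1})\,\E[\norm{\nabla\bar{h}_{n+1}(\param_n)}]$ is controlled by the key Lemma \ref{lem: conv_proof_surrogate_error_grad_sum}, whose proof I would carry out as follows: smoothness and tightness of each surrogate at its last update point give $\norm{\nabla h_n^v(\param_n)}\le L\norm{\param_n-\param_{k^v(n)-1}}\le L\sum_{j=k^v(n)}^{n}r_j$ since $\norm{\param_j-\param_{j-1}}\le r_j$; using $r_{n+1}\le r_j$ for $j\le n$, weighting by $\pi(v)$, and swapping the order of summation reduces $\sum_n(1\wedge r_{n+1})\norm{\nabla\bar{h}_{n+1}(\param_n)}$ to $L\sum_j r_j^2\sum_v\pi(v)\tau_{j,v}$-type sums, whose conditional expectation is bounded by $t_\odot$ through \eqref{eq: target time} and Assumption \ref{assumption: process properties}, giving the $t_\odot C_N L$ contribution. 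The remaining $\sqrt{\tfrac{2L}{\pi_{\text{min}}}C_N\Delta_0}$ cross term I would extract by bounding the gradient error by the corresponding surrogate \emph{values} via smoothness, applying Cauchy--Schwarz across $n$ (pulling out $\sqrt{\sum(1\wedge r_{n+1})^2}\le\sqrt{C_N}$), and invoking the sampled-value descent bound $\Delta_0/\pi_{\text{min}}$. Finally I would use $\min_{1\le n\le N}\E[O_f(\param_n)]\cdot\sum_n(1\wedge r_{n+1})\le\sum_n(1\wedge r_{n+1})\E[O_f(\param_n)]$ and divide through.

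The hard part will be Lemma \ref{lem: conv_proof_surrogate_error_grad_sum}: the summation-order swap that converts the telescoped displacements $\param_n-\param_{k^v(n)-1}$ into return-time-weighted radii, and the use of recurrence to replace the random counts by $t_\odot$. Two subtleties deserve care here. First, because conditioning is on $\cF_n$ and $\sup_n\E[\tau_{n,v}\mid\cF_n]$ is itself random, the recurrence bound must be applied through the $L_\infty$-norm formulation of Assumption \ref{assumption: process properties} rather than a naive tower-property argument. Second, the ball-boundary handling via the radial rescaling must be arranged so that the factor $1\wedge r_{n+1}$ appears cleanly and so that the descent and surrogate-value sums align under the index shift between $\bar{g}_n$ and $\bar{g}_{n+1}$; getting the constants to collapse into the stated $(3+t_\odot)$ is the main bookkeeping obstacle.
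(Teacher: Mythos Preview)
Your proposal only treats item \ref{item: conv to stationarity- alg 1} (the diminishing-radius case). Items \ref{item: conv to stationarity-miso}--\ref{item: conv to stationarity- alg 2} concern Algorithm~\ref{RMISO} with proximal regularization and need their own arguments; the paper handles them separately (the surrogate first-order condition gives $-\nabla\bar g_n(\param_n,\param-\param_n)\le \rho_n\|\param_n-\param_{n-1}\|$, then one sums and combines with Lemma~\ref{lem: iterate gap summability} and the key lemma). Your plan does not touch this.

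For item \ref{item: conv to stationarity- alg 1} your route is correct but genuinely different from the paper's, and in fact sharper. The paper decomposes $f=\bar g_n-\bar h_n$ and then, via Lemma~\ref{lem: surrogate_grad_inf_lowerbound}, relates the $\bar g_n$-optimality at $\param_n$ to the step-$(n{+}1)$ descent; this produces the extra term $r_{n+1}\|\nabla h_n^{v_{n+1}}(\param_n)\|$ (the source of the $\sqrt{2LC_N\Delta_0/\pi_{\min}}$ piece, through Cauchy--Schwarz and Lemma~\ref{lem:single gap convergence}) and the $2Lr_n^2$ term, while the gradient-step sum is bounded by Proposition~\ref{prop: surogate_gradient_step_sum}. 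You instead decompose $f=\bar g_{n+1}-\bar h_{n+1}$ and compare function values directly through the $2L$-smooth upper model and the optimality of $\param_{n+1}$. The payoff is that $g_{n+1}^{v_{n+1}}$ is tight at $\param_n$, so the $v_{n+1}$-component of $\nabla\bar h_{n+1}(\param_n)$ vanishes; hence $\|\nabla\bar h_{n+1}(\param_n)\|\le\sum_v\pi(v)\|\nabla h_n^v(\param_n)\|$ and the key Lemma~\ref{lem: conv_proof_surrogate_error_grad_sum} (with $c_n=1\wedge r_{n+1}$) yields $Lt_\odot C_N$ without any ``sampled-error'' contribution. Your three terms therefore sum to $\Delta_0+(1+t_\odot)LC_N$, which already implies the stated bound and is strictly better: no $\pi_{\min}$ dependence and $(1+t_\odot)$ in place of $(3+t_\odot)$.

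This means your paragraph about ``extracting the remaining $\sqrt{\tfrac{2L}{\pi_{\min}}C_N\Delta_0}$ cross term'' is chasing a ghost: your decomposition simply does not generate that term, and your worry that ``getting the constants to collapse into the stated $(3+t_\odot)$ is the main bookkeeping obstacle'' is misplaced. Two small points to tighten: (a) the sum $\sum_n(\bar g_{n+1}(\param_n)-\bar g_{n+1}(\param_{n+1}))$ is not literally a telescope; you first need the surrogate monotonicity $\bar g_{n+1}(\param_n)\le\bar g_n(\param_n)$ (Lemma~\ref{lem: surrogate monotonicity}) and then it telescopes to at most $\Delta_0$; (b) watch the index mismatch between $\bar h_{n+1}$ in your per-step bound and the $\bar h_n$ in the key lemma---either observe the inequality above or rerun the key-lemma proof with $k^v(n{+}1)$, which reduces to $k^v(n)$ for $v\ne v_{n+1}$ and gives zero for $v=v_{n+1}$.
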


To our best knowledge, the rates of convergence given in Theorem \ref{thm: convergence rates to stationarity} are entirely new for first-order algorithms with general recurrent data sampling. In contrast to the convergence result for SAG in \cite{even2023stochastic} that depends on the hitting time $t_{\text{hit}}$, Algorithm \ref{RMISO} with constant proximal regularization (case \ref{case: prox_regularization-iid}) depends  on the possibly much smaller  target time $t_{\odot}$. See Table \ref{tab: complexity_comparison} for a comparison of our results with other works concerning non-convex optimization with non-i.i.d data.

We remark that items \ref{item: conv to stationarity-miso} and \ref{item: conv to stationarity- alg 2 iid} show the potential benefit of using proximal regularization even if the surrogates are already strongly convex. For non-convex $f$, the strong convexity parameter $\mu$ of any surrogates cannot be larger than the Lipschitz constant $L$. However, we are free to choose $\rho$. So an optimal choice of $\rho$ results in dependence on $\sqrt{Lt_{\odot}}$ in \ref{item: conv to stationarity- alg 2 iid} instead of the linear dependence in \ref{item: conv to stationarity-miso}. However, it is not necessary to chose $\rho$ in the range given in items \ref{item: conv to stationarity- alg 2 iid} and \ref{item: conv to stationarity- alg 2}. We include a more general version of Theorem \ref{thm: convergence rates to stationarity}, Theorem \ref{thm: convergence rates to stationarity-extended}, in Appendix \ref{sec: remarks on main results} which shows that these convergence rates hold for arbitrary  $\rho$ and $\mu$ so long as $\rho + \mu > 0$. Overall, our theory suggests that one can improve convergence by using sampling schemes that cover the dataset most efficiently, i.e. those that minimize $t_{\odot}$ and $t_{\text{hit}}$. See the remarks in Appendix \ref{sec: remarks on main results} for more on this topic.

 We also remark that Theorem 1 in \cite{even2023stochastic} gives a lower bound in terms of $t_{\text{hit}}$. While our results depend $t_{\odot}$, they do not contradict this lower bound. See Appendix \ref{sec: remarks on main results} for more discussion.

\begin{table*}[ht]
\centering
\caption{Comparison of iteration complexity for non-convex optimization with non-i.i.d data. The notation $\tilde{O}(\cdot)$ omits logarithmic factors. Here $t_{\text{mix}}$ represents dependence on the mixing time of the Markov chain.
}
\vspace{0.1in}
\resizebox{0.9\textwidth}{!}{
\begin{tabular}{lllll}
\hline
& \rule{0pt}{1.0\normalbaselineskip} Iteration complexity & Memory \vspace{0.1cm} & Sampling & Sampling dependence \\
\hline
 \rule{0pt}{1.0\normalbaselineskip} AdaGrad \cite{AlacaogluNonconvex2023} & $\tilde{O}(\eps^{-4})$ & $O(1)$ & Markovian & $O(\sqrt{t_{\text{mix}}})$\\
\hline
\rule{0pt}{1.0\normalbaselineskip} SGD \cite{SunOMCGD, even2023stochastic} & $\tilde{O}(\eps^{-4})$ & $O(1)$ & Markovian & $O(\sqrt{t_{\text{mix}}})$\\
\hline
\rule{0pt}{1.0\normalbaselineskip} SGD \cite{mishchenko2020reshuffling, lu2022exampleselection} & $O(\eps^{-3})$ & $O(1)$ & Reshuffling & $O(\sqrt{|\mathcal{V}|})$\\
\hline
\rule{0pt}{1.0\normalbaselineskip} SAG \cite{even2023stochastic} & $O(\eps^{-2})$ & $O(|\mathcal{V}|)$ & Markovian & $O(\sqrt{t_{\text{hit}}})$\\
\hline
 \rule{0pt}{1.0\normalbaselineskip} RMISO Case \ref{case: prox_regularization-iid} & $O(\eps^{-2})$ & $O(|\mathcal{V}|)$ & Recurrent & $O(\sqrt{t_{\odot}})$ \\
 \hline
 \rule{0pt}{1.0\normalbaselineskip} RMISO Case \ref{case: prox_regularization-markovian} & $O(\eps^{-2})$ & $O(|\mathcal{V}|)$ & Recurrent & $O(\sqrt{t_{\odot} + t_{\text{hit}}})$ \\
\hline
\rule{0pt}{1.0\normalbaselineskip} RMISO Case \ref{case: diminishing_radius-markovian} & $\tilde{O}(\eps^{-2})$ & $O(|\mathcal{V}|)$ & Recurrent & $O(t_{\odot})$ \\
\hline
\end{tabular}
}
\label{tab: complexity_comparison}
\end{table*}

Our second result, Theorem \ref{thm: a.s. convergence}, concerns the asymptotic behavior of Algorithms \ref{RMISO} and \ref{RMISO_DR}. 

\begin{theorem}[Global Convergence]\label{thm: a.s. convergence}
Algorithms \ref{RMISO} and \ref{RMISO_DR} have the following asymptotic convergence properties:
\begin{enumerate}[label=(\roman{*}), itemsep=-0.1cm]
    \item \label{item: stationary_limit_points-dynamic} For Case \ref{case: prox_regularization-markovian}, we have $\lim_{n \to \infty} \E[O_f(\param_n)] = 0$ and $\lim_{n \to \infty} \E[O_f(\param_n)^2] = 0$.

    \item \label{item: stationary_limit_points} For Case \ref{case: diminishing_radius-markovian} almost surely every limit point of $(\param_n)_{n \geq 1}$ is stationary for $f$ over $\Param$.
\end{enumerate}

\end{theorem}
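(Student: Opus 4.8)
The plan is to drive both parts from two structural facts. First, the averaged surrogate value $\bar{g}_n(\param_n)$ is quasi-monotone: since $g_n^{v_n}$ is tight at $\param_{n-1}$ and each $g_{n-1}^v$ majorizes $f^v$, one has $\bar{g}_n(\param_{n-1}) \leq \bar{g}_{n-1}(\param_{n-1})$, and the (regularized or radius-restricted) minimization giving $\param_n$ yields $\bar{g}_n(\param_n) \leq \bar{g}_n(\param_{n-1})$; hence $\bar{g}_n(\param_n)$ is non-increasing and bounded below by $\inf_{\param \in \Param} f$. Second, writing $\bar{h}_n := \sum_{v \in \mathcal{V}}(g_n^v - f^v)\pi(v)$ and subtracting $\nabla \bar{h}_n$ from the first-order optimality condition of the subproblem at $\param_n$ gives, for Case \ref{case: prox_regularization-markovian}, the pointwise estimate $O_f(\param_n) \leq \rho_n\norm{\param_n - \param_{n-1}} + \norm{\nabla\bar{h}_n(\param_n)}$. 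The surrogate-error gradient is where recurrence enters: because $g_n^v$ is tight at $\param_{k^v(n)-1}$ we have $\nabla h_n^v(\param_{k^v(n)-1}) = 0$, so $L$-Lipschitzness gives $\norm{\nabla h_n^v(\param_n)} \leq L\norm{\param_n - \param_{k^v(n)-1}} \leq L\sum_{j = k^v(n)}^n \norm{\param_j - \param_{j-1}}$. These are exactly the inputs to the summability Lemmas \ref{lem: conv_proof_surrogate_error_grad_sum}, \ref{lem: gap sum dynamic prox reg}, and \ref{lem: gap sum dr}.

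For part \ref{item: stationary_limit_points-dynamic} I would not argue from the running-minimum bound in Theorem \ref{thm: convergence rates to stationarity}\ref{item: conv to stationarity- alg 2}, but from the stronger statement underlying it: Lemmas \ref{lem: conv_proof_surrogate_error_grad_sum}--\ref{lem: gap sum dynamic prox reg} bound $\sum_{n=1}^{\infty}\E[O_f(\param_n)^2]$ by a finite constant independent of $N$ (dividing this bound by $N$ and combining $\min \leq$ average with Jensen's inequality recovers precisely the stated rate). Since this is a convergent series of nonnegative terms, its summands must vanish, so $\E[O_f(\param_n)^2] \to 0$; the companion claim $\E[O_f(\param_n)] \to 0$ then follows from $\E[O_f(\param_n)] \leq \sqrt{\E[O_f(\param_n)^2]}$.

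For part \ref{item: stationary_limit_points} the argument is pathwise. The quasi-monotonicity above shows $\bar{g}_n(\param_n)$ converges almost surely, and telescoping its two nonnegative decrements yields $\sum_n \pi(v_n)h_{n-1}^{v_n}(\param_{n-1}) < \infty$ a.s. The radius restriction gives the deterministic stability $\norm{\param_n - \param_{n-1}} \leq r_n \to 0$ with $\sum_n \norm{\param_n-\param_{n-1}}^2 \leq \sum_n r_n^2 < \infty$. Next, applying Tonelli's theorem to the finite expected sum produced by Lemma \ref{lem: gap sum dr} upgrades it to an almost surely finite sum, which together with recurrence ($k^v(n) \to \infty$ for each $v$) and $\norm{\nabla h_n^v(\param_n)} \leq L\sum_{j=k^v(n)}^n r_j$ forces $\norm{\nabla\bar{h}_n(\param_n)} \to 0$ a.s.\ along the iterate. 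Finally, to conclude that every limit point $\param_\infty = \lim_k \param_{n_k}$ is stationary, I would argue by contradiction: if $O_f(\param_\infty) = \epsilon > 0$ then, since $f$ is differentiable (Assumption \ref{assumption: Lsmooth}) and $\nabla\bar{h}_n(\param_n) \to 0$, a feasible step of order $r_n$ from $\param_{n-1}$ along a descent direction of $f$ at $\param_\infty$ decreases $\bar{g}_n$ by at least order $\epsilon r_n$ up to an $O(r_n^2)$ curvature correction; because the iterate revisits every neighborhood of $\param_\infty$ infinitely often while $\sum_n r_n = \infty$ dominates $\sum_n r_n^2 < \infty$, the accumulated decrease would be infinite, contradicting the a.s.\ convergence of $\bar{g}_n(\param_n)$. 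Hence $O_f(\param_\infty) = 0$.

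The main obstacle in both parts is controlling the surrogate-error gradient $\nabla\bar{h}_n(\param_n)$ under \emph{only} recurrence. For part \ref{item: stationary_limit_points-dynamic} this is absorbed into the summability lemmas, where the dynamic rule $\rho_n = \rho + \max_{v}(n - k^v(n))$ is essential: it inflates the proximal penalty exactly when some surrogate is stale, keeping $\norm{\param_n - \param_{n-1}}$ small enough that both $\rho_n^2\norm{\param_n - \param_{n-1}}^2$ and $L^2\big(\sum_v \pi(v)\sum_{j=k^v(n)}^n \norm{\param_j - \param_{j-1}}\big)^2$ are summable in expectation, and the constants $t_{\odot}, t_{\text{hit}}$ emerge from bounding the expected ages $\E[n - k^v(n)]$. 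For part \ref{item: stationary_limit_points} the delicate point is that, without mixing, the age $n - k^v(n)$ is controlled only in expectation, so showing the staleness $\sum_{j=k^v(n)}^n r_j \to 0$ almost surely requires a Borel--Cantelli argument balancing the age against the tail $\sum_{j \geq k^v(n)} r_j^2$; one must also handle the times at which the constraint $\norm{\param_n - \param_{n-1}} = r_n$ is active, so that the subproblem optimality condition only certifies descent within the ball and the normal-cone contribution must be shown to vanish in the limit.
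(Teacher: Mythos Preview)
Your structural setup is correct: the decomposition $O_f(\param_n) \leq \rho_n\norm{\param_n-\param_{n-1}} + \norm{\nabla\bar{h}_n(\param_n)}$ for Case~\ref{case: prox_regularization-markovian}, and the use of Lemma~\ref{lem: gap sum dr} to obtain $\bar{h}_n(\param_n) \to 0$ almost surely (hence $\norm{\nabla\bar{h}_n(\param_n)} \to 0$ via $\norm{\nabla\bar{h}_n}^2 \leq 2L\bar{h}_n$) in Case~\ref{case: diminishing_radius-markovian}, both match the paper. But each part has a genuine gap at its concluding step.

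\textbf{Part \ref{item: stationary_limit_points-dynamic}.} Your claim that the lemmas give $\sum_{n=1}^\infty \E[O_f(\param_n)^2] < \infty$ is not supported. Lemma~\ref{lem: gap sum dynamic prox reg} does yield $\sum_n \E[\norm{\nabla\bar{h}_n(\param_n)}^2] \leq 2L\sum_n \E[\bar{h}_n(\param_n)] < \infty$, but the remaining piece requires $\sum_n \E[\rho_n^2\norm{\param_n-\param_{n-1}}^2] < \infty$. Since $\rho_n^2\norm{\param_n-\param_{n-1}}^2 \leq 2\rho_n\delta_n$ with $\delta_n = \bar{g}_{n-1}(\param_{n-1}) - \bar{g}_n(\param_n)$, you would need $\sum_n \E[\rho_n\delta_n] < \infty$; however $\rho_n$ is almost surely unbounded, $\cF_n$-measurable, and correlated with $\delta_n$, so the almost-sure bound $\sum_n \delta_n \leq \Delta_0$ does not deliver this. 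Your parenthetical also misreads the rate proof: the paper establishes $\sum_{n \leq N}\E[O_f(\param_n)] = O(\sqrt{N})$, not summability of the squares. The paper's actual route is termwise: it bounds $\E[\rho_n\delta_n] \leq \sqrt{\E[\rho_n^2]}\sqrt{\E[\delta_n^2]}$, invokes Lemma~\ref{lem: uniform bound of reg parameter} to bound $\E[\rho_n^2]$ uniformly in $n$, and applies dominated convergence ($0 \leq \delta_n \leq \Delta_0$ a.s.\ and $\delta_n \to 0$ a.s.) to get $\E[\delta_n^2] \to 0$. Combined with $\E[\bar{h}_n(\param_n)] \to 0$ this gives $\E[O_f(\param_n)^2] \to 0$ directly, without any summability of that sequence.

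\textbf{Part \ref{item: stationary_limit_points}.} Your contradiction step asserts that because $\param_\infty$ is a limit point and $\sum_n r_n = \infty$, the accumulated decrease along visits to its neighborhood is infinite. This does not follow: the visit times $n_k$ could be so sparse that $\sum_k r_{n_k} < \infty$ even though $\sum_n r_n = \infty$. The paper closes this with two additional ingredients. First, a \emph{long/short point} dichotomy: if $\norm{\param_n - \param_{n-1}} < r_n$ then convexity of $\bar{g}_n$ forces $\param_n \in \argmin_{\param \in \Param}\bar{g}_n(\param)$ (Proposition~\ref{prop: long_point_global_min}), so any limit of long points is already stationary (Proposition~\ref{prop: long_point_limit_stationary}); hence a small ball around a non-stationary $\param_\infty$ eventually contains only short points, and one shows the iterate must also \emph{exit} that ball infinitely often (Proposition~\ref{prop: non_stationary_nhood}). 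Second, a \emph{crossing argument}: each passage from $B_{\eps/3}(\param_\infty)$ out past $B_{2\eps/3}(\param_\infty)$ contributes at least $\eps/3$ to $\sum \norm{\param_{n+1}-\param_n}$ along the crossing indices, so infinitely many crossings force $\sum_k \norm{\param_{n_k+1}-\param_{n_k}} = \infty$ over a subsequence confined to $B_{2\eps/3}(\param_\infty)$. Proposition~\ref{prop: stationary_subsequence} (fed by Proposition~\ref{prop: surogate_gradient_step_sum} and Lemma~\ref{lem:single gap convergence}) then extracts a further subsequence whose limit is stationary and lies inside $B_\eps(\param_\infty)$, contradicting the choice of $\eps$. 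Your sketch has neither the long/short mechanism nor the crossing argument, and without them the infinite-decrease claim is unjustified.
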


Theorem \ref{thm: a.s. convergence} shows that although RMISO with diminishing radius requires computing a projection at each step and has higher order dependence on $t_{\odot}$, it enjoys the strongest asymptotic guarantees. RMISO with dynamic proximal regularization is somewhere in the middle. It has lower order dependence on $t_{\odot}$ than the diminishing radius version but also depends on $t_{\text{hit}}$. 
However, we are able to show that both the first and second moments of the optimality gap converge to zero. In particular, notice that in the familiar case that $\Param = \R^p$ and $f$ is differentiable \ref{item: stationary_limit_points-dynamic} implies that $\lim_{n \to \infty} \E[\norm{\nabla f(\param_n)}] = 0$.

Though Algorithm \ref{RMISO} with constant proximal regularization is the simplest of our proposed methods and has the best dependence on the constants $L$ and $t_{\odot}$, it appears that stronger regularization schemes as in cases \ref{case: prox_regularization-markovian} and \ref{case: diminishing_radius-markovian} are needed for  obtaining asymptotic convergence guarantees.
We refer the reader to Appendix \ref{sec: remarks on main results} as well as remark \ref{rem: no convergence constant pr} in Appendix \ref{sec: asymptotic analysis} for a more detailed discussion on the technical difficulties of proving asymptotic convergence for Case \ref{case: prox_regularization-iid} as well as proving it in the almost sure sense for Case \ref{case: prox_regularization-markovian}.

\subsection{Sketch of proofs}
\label{sec: sketch of proofs}

In this section we provide a short sketch of our analysis in order to convey the main ideas. Let $\bar{h}_n = \sum_{v \in V} h_n^v \pi(v)$ be the average surrogate approximation error at step $n$. The key step in our analysis (Lemma \ref{lem: conv_proof_surrogate_error_grad_sum}) is to  prove
 \begin{align}
     \sum_{n = 1}^{N} c_n \E[\norm{\nabla \bar{h}_n(\param_n)}] = O \left( \left(\sum_{n = 1}^{N} c_n^2 \right)^{1/2} \right),
     \label{eq: sketch_gap_sum}
 \end{align}
 where $c_n$ is any non-increasing sequence. For simplicity, assume that the surrogate functions $g_n^v$ and the objective functions $f^v$ are differentiable and we are in the unconstrained setting, i.e. $\Param = \R^p$. If $\param_n$ is a minimizer of $\bar{g}_n$ then $\nabla \bar{g}_n(\param_n) = 0$. Therefore $\norm{\nabla \bar{h}_n(\param_n)} = \norm{\nabla f(\param_n)}$ and \eqref{eq: sketch_gap_sum} implies
 \begin{align}
     \sum_{n = 1}^{N} c_n\E[\norm{\nabla f(\param_n)}]  = O \left( \left(\sum_{n = 1}^N c_n^2 \right)^{1/2} \right).
 \end{align}
 If we take $c_n = 1$, we can then conclude that $\min_{1 \leq n \leq N} \E[\norm{\nabla f(\param_n)}] = O(N^{-1/2})$.  The addition of regularization introduces an added complication because we are no longer directly minimizing $\bar{g}_n$ on the entire feasible set and so do not have $\nabla \bar{g}_n(\param_n) = 0$. However, as we argue in Section \ref{sec: convergence rate analysis}, the added regularization is not too strong asymptotically.


 The main idea is to focus in the individual error gradients because each has the property $\nabla h_n^v(\param_{k^v(n)-1}) = 0$. By Definition \ref{def: surrogates}, we then have $\norm{\nabla h_n^v(\param_n)} \leq L \norm{\param_n - \param_{k^v(n) - 1}}$, so to show \eqref{eq: sketch_gap_sum} we only need to prove 
 \begin{align}
     \sum_{n = 1}^N c_n\E[\norm{\param_n - \param_{k^v(n)-1}}] = O \left( \left( \sum_{n = 1}^N c_n^2 \right)^{1/2} \right) 
 \end{align}
 The triangle inequality and monotonicity of $(c_n)$ imply
 \begin{align}
     c_n\norm{\param_n - \param_{k^v(n)-1}}  \leq \sum_{i = k^v(n)}^n c_i \norm{\param_i - \param_{i-1}},
\label{eq: sketch_schwarts_breakup}
\end{align}
so we can relate the error $\norm{\nabla h_n^v(\param_n)}$ to the sequence $(\norm{\param_n - \param_{n-1}}^2)_{n \geq 1}$. The crucial role played by the regularization or strong convexity is that we can prove the following iterate stability: $\sum_{n = 1}^{\infty} \norm{\param_n - \param_{n-1}}^2 < \infty$ a.s. This idea was also used in \cite{lyu2021BCD,lyu2022online,lyu2023srmm}.

Under Assumption \ref{assumption: process properties} one can expect $\E[n - k^v(n)] \leq M$ for some $M$. One can then intuitively view the expectation of the right hand side of \eqref{eq: sketch_schwarts_breakup} similarly to $\sum_{i = n - M}^n c_i\E[\norm{\param_i - \param_{i-1}}]$. Summing this from $n = 1$ to $N$ we conclude that for a positive constant $C$,
\begin{multline}
      \sum_{n =1}^N \E[\norm{\nabla h_n^v(\param_n)}] \approx C \sum_{n = 1}^N c_n \E[\norm{\param_n - \param_{n-1}}].
\end{multline}
By Cauchy-Schwartz and the iterate stability obtained through regularization, the right hand side is $O \left( \left( \sum_{n = 1}^{N} c_n^2 \right)^{1/2} \right)$. Full details of our analysis are given in Appendices \ref{sec: preliminary lemmas}, \ref{sec: convergence rate analysis}, and \ref{sec: asymptotic analysis}.

\section{Applications and Experiments}\label{sec: applications and experiments}

\subsection{Applications}
In this section we give some applications of our general framework. These include applications to matrix factorization as well as a double averaging version of RMISO derived by using prox-linear surrogates.

\subsubsection{Distributed Matrix Factorization}\label{sec: nmf}

Before beginning this section we define some additional notation. For a collection of matrices $\{A_v\}_{v \in \mathcal{V}} \subset \R^{n \times m}$ we let $[A_v; v \in \mathcal{V}]$ be their concatenation along the horizontal axis. For a set $\Param \subset \R^{n \times m}$ we let $\Param^{\mathcal{V}} = \{[A_v ; v \in \mathcal{V}] : A_v \in \Param \text{ for all } v\}$.

We consider the matrix factorization loss $f(W, H) = \frac{1}{2}\norm{X - WH}_{F}^2 + \alpha \norm{H}_1$ where $X \in \R^{p \times d}$ is a given data matrix to be factored into the product of dictionary $W \in \Param_W \subseteq \R^{p \times r}$ and code $H \in \Param_H \subseteq \R^{r \times d}$ with $\alpha \geq 0$ being the $L_1$-regularization parameter for $H$. Here $\Param_W$ and $\Param_H$ are convex constraint sets. 

Suppose we have a connected graph $\mathcal{G} = (\mathcal{V}, \mathcal{E})$ where each vertex stores 
a matrix $X_v \in \R^{p \times d}$. For each $v \in \mathcal{V}$ define the loss function 
\begin{align}
    &f^v(W) = \inf_{H \in \Param_H} \frac{1}{2}\norm{X^v - WH}_{F}^2 + \alpha\norm{H}_1,
\end{align}
which is the minimum reconstruction error for factorizing $X_v$ using the dictionary $W$. In this context, the empirical loss to be minimized is $\frac{1}{|\mathcal{V}|} \sum_{v \in \mathcal{V}}f^v(W)$.
Note that this problem is not convex. Indeed, letting $X = [X_{v}; v\in V]$ it is equivalent to finding $(W^*, H^*) \in \Param_W \times \Param_H^{\mathcal{V}}$ minimizing $\frac{1}{2}\norm{X - WH}_F^2 + \alpha \norm{H}_1$, 
which is a constrained non-convex optimization problem with a bi-convex loss function. 

In order to apply RMISO let $W_{n-1} \in \Param_W$ be the previous dictionary and denote 
\begin{align}
    \label{eq: code update}
    H_n^v \in \argmin_{H \in \Param_H^\mathcal{V}} \frac{1}{2} \norm{X_v - W_{n-1}H}_F^2 + \alpha \norm{H}_1
\end{align}
if $v_n = v$ and otherwise $H_n^v = H_{n-1}^v$.
Then the function $g_n^v(W) := \frac{1}{2} \norm{X_v - W_{n-1}H_n^v}_{F}^2 + \alpha\norm{H_n^v}_1$ is a majorizing surrogate of $f^v$ at $W_{n-1}$ and belongs to $\mathcal{S}_{L'}(f^v, W_{n-1})$ for some $L' > 0$ (see Ex. \ref{ex: variational surrogates}). 
Then Algorithms \ref{RMISO} and \ref{RMISO_DR} can be used with these surrogates. 


\subsubsection{Prox-Linear Surrogates}
Suppose each $f^v$ is differentiable and has $L$-Lipschitz continuous gradients. Then the functions 
\begin{align}
   \hspace{-0.3cm} g^v(\param) = f^v(\param') + \langle \nabla f^v(\param'), \param - \param' \rangle + \frac{L}{2}\norm{\param - \param'}
\end{align}
are in $\mathcal{S}_{2L, L}(f^v, \param')$ (see Example \ref{ex: prox_linear surrogates}) . Further suppose that $\pi$ is the uniform distribution. Using these surrogates, the update according to Algorithm \ref{RMISO} is 
\begin{equation}
    \begin{cases}
        \bar{\param}_{n-1} &\leftarrow \frac{1}{|\mathcal{V}|}\sum_{v \in \mathcal{V}} \param_{k^v(n)-1}\\
        \bar{\nabla}_{n-1} &\leftarrow \frac{1}{|\mathcal{V}|}\sum_{v \in \mathcal{V}} \nabla f^v(\param_{k^v(n)-1})\\
        \tilde{\param}_{n-1} &\leftarrow \frac{\rho_n}{L + \rho_n} \param_{n-1} + \frac{L}{L + \rho_n} \bar{\param}_{n-1}\\
        \param_n &\leftarrow \text{Proj}_{\Param} \left(\tilde{\param}_{n-1} - \frac{1}{L + \rho_n}\bar{\nabla}_{n-1}\right).
    \end{cases}
    \label{eq: prox_linear_update}
\end{equation}
Compared with MISO (obtained by setting $\rho_n = 0$ in \eqref{eq: prox_linear_update}) we see that the additional proximal regularization has the effect of further averaging the iterates, putting additional weight of $\frac{\rho_n}{L + \rho_n}$ on the most recent parameter $\param_{n-1}$.  

\subsection{Experiments}\label{sec: experiments}

\subsubsection{Distributed Nonnegative Matrix Factorization}\label{sec: nmf experiments}
In this section we compare the performance of the distributed matrix factorization version of RMISO from Sec. \ref{sec: nmf} against other well known optimization algorithms. We consider a randomly drawn collection of 5000 images from the MNIST \cite{mnist} dataset 
where each sample $X_v$ represents a subset of images. 
In all experiments, we set  $\alpha = \frac{1}{28}$ and $r = 15$. The dictionary $W$ is constrained to be non-negative 
and rows with euclidean norm at most one.

The set of vertices $\mathcal{V}$ is arranged in a cycle graph with $|\mathcal{V}| = 55$ with each vertex restricted to only contain samples with the same label. We consider two different sampling algorithms: the standard random walk where $t_{\odot}$ and $t_{\text{hit}}$ are both $O(|\mathcal{V}|^2)$, and cyclic where both are $O(|\mathcal{V}|)$. Our theory suggests we should expect better performance for cyclic sampling versus the random walk. We compare all three versions of RMISO: constant proximal regularization (RMISO-CPR), dynamic proximal regularization (RMISO-DPR), and diminishing radius (RMISO-DR), with MISO \cite{marial2015miso}, the online nonnegative matrix factorization (ONMF) algorithm of \cite{marialONMF}, and AdaGrad \cite{duchiAdaGrad}. 

It is not guaranteed that the surrogates $g_n^v(W) := \frac{1}{2} \lVert X_v - W_{n-1}H_n^v\rVert_{F}^2 + \alpha\lVert H_n^v \rVert_1$ are strongly convex. However, while running the experiments, we find that the Hessian of the averaged surrogate is positive definite after only a few iterations and thus the results for MISO are also supported by Theorem \ref{thm: convergence rates to stationarity} \ref{item: conv to stationarity- alg 2 iid}. This phenomenon is also discussed in Assumption B of \cite{marialONMF}.

\begin{figure}[h]
\centering
        \begin{subfigure}{0.235\textwidth}
        \includegraphics[width=\textwidth]{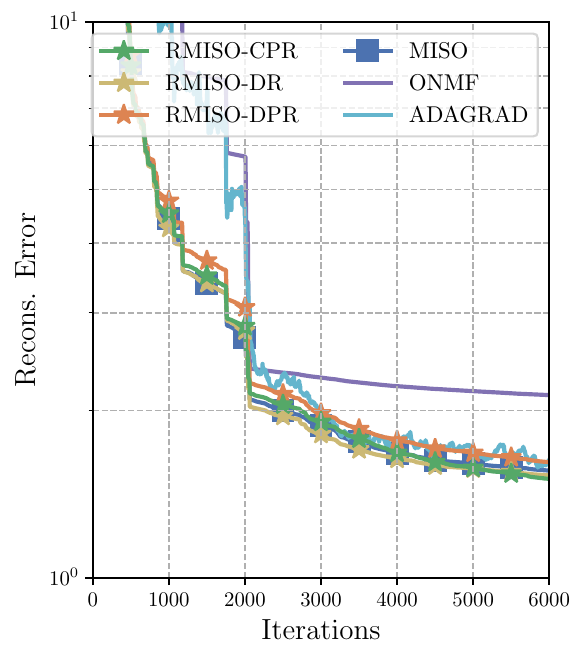}
        \caption{Random Walk}
        \end{subfigure}
        \begin{subfigure}{0.235\textwidth}
            \includegraphics[width=\textwidth]{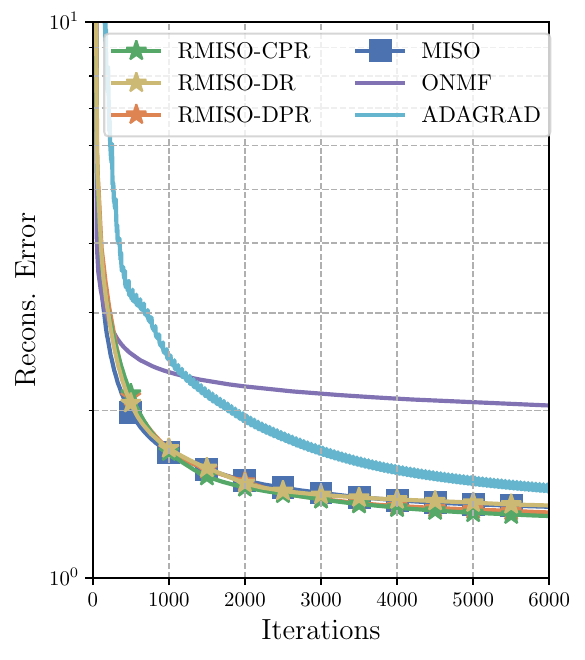}
            \caption{Cyclic}
        \end{subfigure}
        \vspace{-0.3cm}
        \caption{Plot of reconstruction error against interation number for NMF using two sampling algorithms. Results show the performance of algorithms RMISO, MISO (Algorithm \ref{RMISO} with $\rho_n = 0$), ONMF, and AdaGrad in factorizing a collection of MNIST \cite{mnist} data matrices. }
        \label{subfig: nmf}
\end{figure}


\vspace{-0.2cm}
We ran the experiment ten times with ten different random seeds and plot the average reconstruction error versus iteration number in Figure 
\ref{subfig: nmf}. We see that RMISO outperforms ONMF and shows competitive performance against AdaGrad for both sampling schemes. As expected, there is a dramatic performance improvement under cyclic sampling versus the random walk. 

\subsubsection{Logistic regression with nonconvex regularization}

We consider logistic regression with the non-convex regularization term $R(\theta) = 0.01 \cdot \sum_{i = 1}^{p} \frac{\theta_i^2}{1 + \theta_i^2}$ where $\param \in \R^p$ is the parameter to be optimized. We use the \texttt{a9a} dataset \cite{adult_2}. Here we consider the random walk on two separate graph topologies: the complete graph and the `lonely' graph as in Figure \ref{fig: lonely_node_graph}. Both graphs have $|\mathcal{V}| = 50$ and each vertex only stores data with the same label. 

\begin{figure}[h]
\centering
        \begin{subfigure}{0.235\textwidth}
        \includegraphics[width=\textwidth]{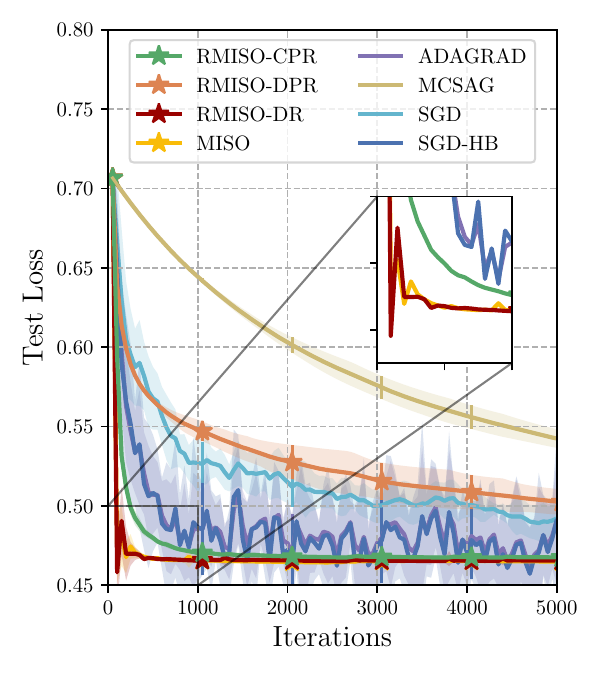}
        \caption{Lonely graph}
        \end{subfigure}
        \begin{subfigure}{0.235\textwidth}
            \includegraphics[width=\textwidth]{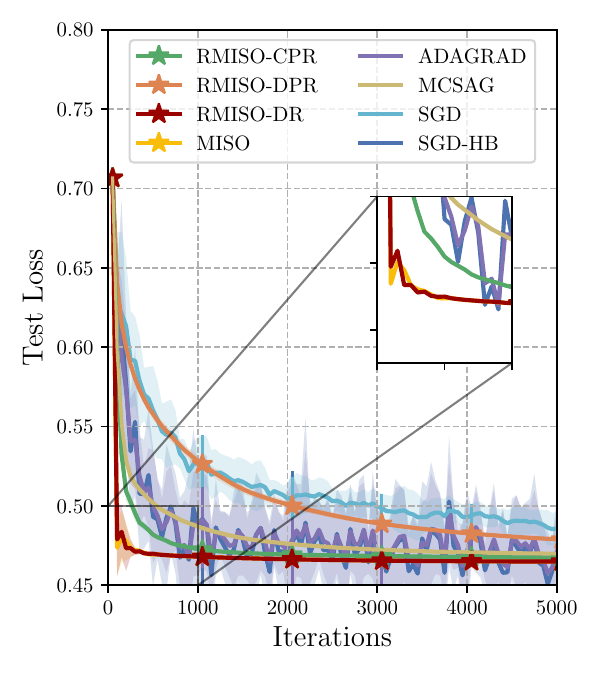}
            \caption{Complete graph}
        \end{subfigure}
        \vspace{-0.3cm}
    \caption{Plot of objective loss and standard deviation against the test dataset for \texttt{a9a} for two graph topologies and various optimization algorithms- RMISO, MISO (Algorithm \ref{RMISO} with $\rho_n =0$), AdaGrad, MCSAG, SGD, Adam, and SGD-HB}
    \label{fig: binary classification}
\end{figure}


 We compared eight different optimization algorithms: (1) the prox-linear version of Algorithm \ref{RMISO} \eqref{eq: prox_linear_update} with non-zero proximal regularization (RMISO-CPR); (2) Algorithm \ref{RMISO} with dynamic proximal regularization; (3) Algorithm \ref{RMISO_DR}; (4) MISO (Algorithm \ref{RMISO} with $\rho_n = 0$); (5) AdaGrad \cite{duchiAdaGrad}; (6) Markov Chain SAG (MCSAG) \cite{even2023stochastic}; (7) SGD with decaying step size; (8) SGD-HB (SGD with momentum).

 We ran each experiment with ten different seeds. The results plotted in Figure \ref{fig: binary classification} show the average loss against the test dataset for both graph topologies over these ten runs as well as a shaded region with boundaries given by the standard deviation. We see that RMISO-DPR and MCSAG display poorer performance on the lonely graph as $t_{\text{hit}}$ increases from $O(|\mathcal{V}|)$ to $O(|\mathcal{V}|^2)$. The performance of RMISO-CPR, RMISO-DR, and MISO are unchanged since each only depends on $t_{\odot}$, with RMISO-DR and MISO performing the best and only narrowly outperforming RMISO-CPR. 

Both SGD-HB and AdaGrad converge quickly in both settings but suffer from unstable trajectories compared to RMISO and MCSAG. A more stable algorithm may be advantageous in situations where the value of the objective function cannot easily be computed. See \cite{nesterovDoubleAverage} for an example of such a situation.  

 \section{Conclusion}

 In this paper we have established convergence and complexity results for our proposed extensions of MISO under the general assumption of recurrent data sampling. Our results show that convergence speed depends crucially on the average or supremized expected time to 
 reach other data points. In particular, the constant proximal regularization version of our algorithm depends only on the averaged target time, a potentially large improvement over the hitting time. Both our analysis and numerical experiments display the benefit of using possibly non-i.i.d or non-Markovian sampling schemes in order to accelerate convergence.

  \newpage

 \section*{Acknowledgements}

 This work was supported in part by NSF Award DMS-2023239 and DMS-2206296. The authors thank Qiaomin Xie for helpful comments.

 \section*{Impact Statement}

  This paper presents work whose goal is to advance the field of Machine Learning. There are many potential societal consequences of our work, none of which we feel must be specifically highlighted here.

\bibliography{refs}
\bibliographystyle{icml2024}

\newpage
\appendix
\onecolumn

\section{Further remarks on main results}\label{sec: remarks on main results}

We include in this section an extended version of Theorem \ref{thm: convergence rates to stationarity} including convergence rates for arbitrary regularization parameters as well as two of its corollaries. The extended version of Theorem \ref{thm: convergence rates to stationarity} is below.

\begin{theorem}[Extended Version of Theorem \ref{thm: convergence rates to stationarity} in the main text]\label{thm: convergence rates to stationarity-extended}
Algorithms \ref{RMISO} and \ref{RMISO_DR} satisfy the following:
\begin{enumerate}[label=(\roman{*})]

\item \label{item: conv to stationarity- alg 2 iid-extended} Assume Case \ref{case: prox_regularization-iid}. Then 
\begin{align}
     \min_{1 \leq n \leq N} \E\left[ \sup_{\param \in \Param, \norm{\param - \param_n}\leq 1} -\nabla f(\param_n, \param - \param_n) \right]
     \leq \frac{\sqrt{2\Delta_0}\left( \frac{\rho}{\sqrt{\rho + \mu}} + \frac{Lt_{\odot}}{\sqrt{\rho + \mu}}\right)}{\sqrt{N}}. \label{eq: conv_to_stationarity_alg 2 obj iid-extended}
\end{align}
In particular, if $\rho$ is chosen so that $\rho \leq Lt_{\odot} \leq \rho + \mu$ then 
\begin{align}
     \min_{1 \leq n \leq N} \E\left[ \sup_{\param \in \Param, \norm{\param - \param_n}\leq 1} -\nabla f(\param_n, \param - \param_n) \right]
     \leq 2\sqrt{\frac{2\Delta_0Lt_{\odot}}{N}}.
\end{align}

\item \label{item: conv to stationarity- alg 2-extended} Assume Case \ref{case: prox_regularization-markovian}. Then 
\begin{align}
    \min_{1 \leq n \leq N} \E \left[\sup_{\param \in \Param, \norm{\param - \param_n}\leq 1} - \nabla f(\param_n, \param - \param_n) \right]
    \leq \frac{\sqrt{2\Delta_0} \left( \sqrt{\rho + (2 t_{\text{hit}} + 1)\log_2 (4|\mathcal{V}|)} + \frac{Lt_{\odot}}{\sqrt{\rho + \mu}} \right) }{\sqrt{N}}. \label{eq: conv_to_stationarity_alg 2 obj-extended}
\end{align} 
If $\rho$ satisfies the condition $\rho \leq Lt_{\odot} \leq \rho + \mu$ then
\begin{align}
\min_{1 \leq n \leq N} \E \left[ \sup_{\param \in \Param, \norm{\param - \param_n \leq 1}} - \nabla f(\param_n, \param - \param_n) \right]
\leq \frac{\sqrt{2\Delta_0}\left( \sqrt{Lt_{\odot} + (2 t_{\text{hit}} + 1)\log_2 (4|\mathcal{V}|)} + \sqrt{Lt_{\odot}} \right)}{\sqrt{N}}.
\end{align}

\item \label{item: conv to stationarity- alg 1-extended} Assume Case \ref{case: diminishing_radius-markovian}. Then
\begin{align}
    \min_{1 \leq n \leq N} \E \left[ \sup_{\param \in \Param, \norm{\param - \param_n} \leq 1} \langle - \nabla f(\param_n), \param - \param_n \rangle \right]
    \leq \frac{\Delta_0 + \sqrt{\frac{2L}{\pi_{\text{min}}}C_N\Delta_0} + \Big(3 + t_{\odot}\Big)C_NL}{\sum_{n = 1}^N 1 \wedge r_{n+1}}, \label{eq: conv_to_stationarity_alg 1 obj-extended}
\end{align}
where $C_N = \sum_{n = 1}^N r_n^2$.
\end{enumerate}
\end{theorem}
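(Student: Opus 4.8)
The plan is to reduce all three parts to a single weighted-averaging inequality
\[
\min_{1 \le n \le N}\E[O_f(\param_n)] \;\le\; \frac{\sum_{n=1}^N c_n\,\E[O_f(\param_n)]}{\sum_{n=1}^N c_n},
\]
with $c_n \equiv 1$ for Cases \ref{case: prox_regularization-iid} and \ref{case: prox_regularization-markovian}, and $c_n = 1 \wedge r_{n+1}$ for Case \ref{case: diminishing_radius-markovian}, and then to bound the numerator. The first step, common to all cases, is a per-step optimality decomposition. Writing $f = \bar{g}_n - \bar{h}_n$ and using that $\bar{h}_n$ is differentiable (Definition \ref{def: surrogates}), while the first-order optimality condition of the regularized subproblem in Algorithm \ref{RMISO} gives $\nabla\bar{g}_n(\param_n,\param-\param_n) + \rho_n\langle \param_n - \param_{n-1}, \param - \param_n\rangle \ge 0$ for every $\param \in \Param$, I would obtain, after a Cauchy--Schwarz step over the unit ball $\{\norm{\param - \param_n}\le 1\}$,
\begin{align}
O_f(\param_n) \;\le\; \rho_n\norm{\param_n - \param_{n-1}} + \norm{\nabla \bar{h}_n(\param_n)}. \label{eq:plan-perstep}
\end{align}
For Case \ref{case: diminishing_radius-markovian} the analogous bound replaces the proximal term by a boundary term arising from the active ball constraint $B_{r_n}(\param_{n-1})$.

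The second ingredient is iterate stability. Summing the strong-convexity descent inequality for the $(\mu+\rho_n)$-strongly convex regularized objective, together with the majorization property $\bar{g}_n(\param_{n-1}) \le \bar{g}_{n-1}(\param_{n-1})$ (since $g_n^{v_n}$ is tight at $\param_{n-1}$), telescopes to $\sum_{n=1}^N (\mu+\rho_n)\norm{\param_n-\param_{n-1}}^2 \le 2\Delta_0$. For the surrogate-error term I would invoke the key Lemma \ref{lem: conv_proof_surrogate_error_grad_sum}, which bounds $\sum_n c_n\E[\norm{\nabla\bar{h}_n(\param_n)}]$ by a constant multiple of $Lt_\odot\sqrt{2\Delta_0/(\mu+\rho)}\,(\sum_n c_n^2)^{1/2}$; this is exactly where the target time $t_\odot$ enters. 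For Case \ref{case: prox_regularization-iid} the proximal contribution is then immediate: with $c_n\equiv 1$ and $\rho_n\equiv\rho$, Cauchy--Schwarz gives $\sum_n\norm{\param_n-\param_{n-1}} \le \sqrt{N}\,(2\Delta_0/(\mu+\rho))^{1/2}$, producing the summand $\rho\sqrt{2\Delta_0}/\sqrt{\rho+\mu}$; dividing by $N$ and combining with the Lemma yields part \ref{item: conv to stationarity- alg 2 iid-extended}.

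The only genuinely new estimate is for Case \ref{case: prox_regularization-markovian}, where $\rho_n = \rho + \max_{v}(n-k^v(n))$ is random. Here I would split $\rho_n\norm{\param_n-\param_{n-1}} = \sqrt{\rho_n}\cdot\sqrt{\rho_n}\norm{\param_n-\param_{n-1}}$ and apply Cauchy--Schwarz to get $\sum_n\rho_n\norm{\param_n-\param_{n-1}} \le (\sum_n\rho_n)^{1/2}(\sum_n\rho_n\norm{\param_n-\param_{n-1}}^2)^{1/2} \le (\sum_n\rho_n)^{1/2}\sqrt{2\Delta_0}$ via the stability bound. After taking expectations (Jensen on the square root) the task reduces to bounding $\E[\sum_n\rho_n] = N\rho + \sum_n\E[\max_v(n-k^v(n))]$, so the crux is a maximal inequality for the backward recurrence (staleness) time, $\E[\max_{v}(n-k^v(n))] \le (2t_{\text{hit}}+1)\log_2(4|\mathcal{V}|)$. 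I expect this to be the main obstacle. The idea is that, by Markov's inequality applied to the conditionally-bounded return time of Assumption \ref{assumption: process properties}, the probability of failing to revisit a fixed $v$ within a block of $2t_{\text{hit}}$ steps is at most $1/2$; chaining over disjoint blocks yields geometric tail decay $\P(n-k^v(n) > 2t_{\text{hit}}m \mid \cF) \le 2^{-m}$, and a union bound over the $|\mathcal{V}|$ coordinates followed by the tail-sum identity $\E[Y]=\sum_{t\ge 0}\P(Y>t)$ gives the logarithmic dependence on $|\mathcal{V}|$ and linear dependence on $t_{\text{hit}}$.

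Finally, Case \ref{case: diminishing_radius-markovian} uses $L$-smoothness (Assumption \ref{assumption: Lsmooth}) in place of strong convexity. I would derive descent from smoothness of $\bar{g}_n$ on $B_{r_n}(\param_{n-1})$, obtaining $\sum_n\norm{\param_n-\param_{n-1}}^2 \le C_N$ directly from the hard radius constraint, and I would handle \eqref{eq:plan-perstep} by separating the case where the unconstrained surrogate minimizer lies inside the ball (inactive constraint, clean stationarity for $\bar{g}_n$) from the case where it sits on the boundary (contributing a term scaling like $\norm{\param_n-\param_{n-1}}/r_n \le 1$, which is what forces the weight $c_n = 1\wedge r_{n+1}$). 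The cross term $\sqrt{(2L/\pi_{\text{min}})C_N\Delta_0}$ should emerge from a Cauchy--Schwarz pairing of the descent sum against the surrogate-error sum, with the factor $\pi_{\text{min}}^{-1}$ entering when the per-step progress contributed by the freshly-updated coordinate is lower-bounded by its weight $\pi(v)\ge\pi_{\text{min}}$; the term $(3+t_\odot)C_NL$ then collects the Lipschitz surrogate-error contribution (carrying $t_\odot$ from Lemma \ref{lem: conv_proof_surrogate_error_grad_sum}) together with the descent remainder. Dividing the assembled numerator by $\sum_n(1\wedge r_{n+1})$ delivers part \ref{item: conv to stationarity- alg 1-extended}.
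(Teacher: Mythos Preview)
Your proposal is correct and tracks the paper's argument closely for parts \ref{item: conv to stationarity- alg 2 iid-extended} and \ref{item: conv to stationarity- alg 2-extended}; the only cosmetic difference in the dynamic case is that the paper first bounds $\rho_n\norm{\param_n-\param_{n-1}}\le\sqrt{2\rho_n\delta_n}$ pointwise (with $\delta_n=\bar g_{n-1}(\param_{n-1})-\bar g_n(\param_n)$) and then applies Cauchy--Schwarz twice on expectations, whereas you do almost-sure Cauchy--Schwarz first and then Jensen --- both land on $\sqrt{2N(\rho+t_{\text{cov}})\Delta_0}$, and your description of the cover-time bound via geometric blocking plus a union bound is exactly Proposition \ref{prop: return time finite exponential moments and cover time bound}\ref{item: tcov bound} combined with Lemma \ref{lem: sum max resid time}.

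For part \ref{item: conv to stationarity- alg 1-extended} your high-level plan (weight $c_n=1\wedge r_{n+1}$, invoke Lemma \ref{lem: conv_proof_surrogate_error_grad_sum}, divide) is right, but the detailed mechanism is not the active/inactive-constraint case split you sketch. The paper instead proves a smoothness-based comparison (Lemma \ref{lem: surrogate_grad_inf_lowerbound}): using $2L$-smoothness of $\bar g_n$ and that $\param_n$ beats every competitor in $\Param\cap B_{r_n}(\param_{n-1})$, one obtains
\[
b_{n+1}\sup_{\norm{\param-\param_n}\le 1}\langle-\nabla\bar g_n(\param_n),\param-\param_n\rangle
\;\le\; \langle-\nabla\bar g_{n+1}(\param_n),\param_{n+1}-\param_n\rangle
+ r_{n+1}\norm{\nabla h_n^{v_{n+1}}(\param_n)} + 2Lr_{n+1}^2.
\]
The cross term $\sqrt{(2L/\pi_{\min})C_N\Delta_0}$ is precisely the middle term summed: bound $\norm{\nabla h_n^{v_{n+1}}(\param_n)}\le\sqrt{2L\,h_n^{v_{n+1}}(\param_n)}$ by the self-bounding inequality for nonnegative smooth functions (Lemma \ref{lem: L-smooth_grad_upperbound}), then Cauchy--Schwarz against $(r_{n+1})$ and invoke $\sum_n h_n^{v_{n+1}}(\param_n)\le\Delta_0/\pi_{\min}$ (Lemma \ref{lem:single gap convergence}). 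So $\pi_{\min}^{-1}$ enters through this single-gap telescoping identity, not through a ``descent sum paired with surrogate-error sum'' as you write. The first right-hand term sums to at most $\Delta_0+LC_N$ by Proposition \ref{prop: surogate_gradient_step_sum}, and the $t_\odot$ piece comes from Lemma \ref{lem: conv_proof_surrogate_error_grad_sum} with $c_n=r_{n+1}$ exactly as you indicate.
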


Next, Corollary \ref{cor: conv rates to stationarity-param in interior} specializes these results to the setting of unconstrained nonconvex optimization.

\begin{corollary}\label{cor: conv rates to stationarity-param in interior}
    Assume either $\Param = \R^p$ or that there exists $c \in (0, 1]$ so that $\text{dist}(\param_n, \partial \Param) \geq c$ for all $n \geq 1$. 
    \begin{enumerate}[label=(\roman{*})]

        \item \label{item: cor-conv to stationarity-interior-prox_markov} Let $(\param_n)_{n \geq 0}$ be an output of Algorithm \ref{RMISO}. Assume case \ref{case: prox_regularization-iid} or \ref{case: prox_regularization-markovian}. Then for any $N \geq 1$
        \begin{align}
             \min_{1 \leq n \leq N} \E\left[\norm{\nabla f(\param_n)}\right] = O \left(N^{-1/2} \right).
            \label{eq: cor-conv to stationarity-interior-PR}
        \end{align}
        
        \item \label{item: cor-conv to stationarity-iterior-DR} Let $(\param_n)_{n \geq 0}$ be an output of Algorithm \ref{RMISO_DR}. Assume case \ref{case: diminishing_radius-markovian}. Then for any $N \geq 1$
        \begin{align}
            \min_{1 \leq n \leq N} \E\left[\norm{\nabla f(\param_n)}\right] = O \left( \left( \sum_{n = 1}^N 1 \wedge r_n \right)^{-1} \right).
            \label{eq: cor-conv to stationarity-interior-DR}
        \end{align}

    \end{enumerate}
\end{corollary}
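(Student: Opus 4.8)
The plan is to derive Corollary~\ref{cor: conv rates to stationarity-param in interior} from Theorem~\ref{thm: convergence rates to stationarity-extended} by relating the optimality measure $O_f(\param_n)$ (and its directional-derivative expressions appearing in the extended theorem) to the gradient norm $\norm{\nabla f(\param_n)}$ in the differentiable, interior-point regime. The key observation is that under the hypothesis $\Param = \R^p$ or $\text{dist}(\param_n, \partial\Param) \geq c$, the minimizing direction in $O_f$ can be chosen freely within a ball of radius $c$ around $\param_n$ without leaving $\Param$, so the supremum over feasible unit-norm directions recovers the full gradient norm up to a factor depending only on $c$.

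First I would recall that when $f$ is differentiable, the directional derivative is $\nabla f(\param_n, \param - \param_n) = \langle \nabla f(\param_n), \param - \param_n \rangle$. Thus the quantity inside the expectation in \eqref{eq: conv_to_stationarity_alg 2 obj iid-extended}–\eqref{eq: conv_to_stationarity_alg 1 obj-extended} becomes $\sup_{\param \in \Param, \norm{\param - \param_n} \leq 1} \langle -\nabla f(\param_n), \param - \param_n \rangle$. In the unconstrained case $\Param = \R^p$, taking $\param - \param_n = -\nabla f(\param_n)/\norm{\nabla f(\param_n)}$ (a feasible unit vector) shows this supremum equals exactly $\norm{\nabla f(\param_n)}$, so $O_f(\param_n) = \norm{\nabla f(\param_n)}$ and the corollary's bounds follow immediately from the theorem. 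In the interior case, the same direction is feasible because $\param_n - \frac{\nabla f(\param_n)}{\norm{\nabla f(\param_n)}}\cdot(c \wedge 1)$ stays in $\Param$ by the distance assumption; choosing the step length $c \wedge 1 \leq 1$ gives $O_f(\param_n) \geq (c \wedge 1)\norm{\nabla f(\param_n)}$, hence $\norm{\nabla f(\param_n)} \leq (c \wedge 1)^{-1} O_f(\param_n)$, which only changes the implied constant.

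With this comparison in hand, I would then simply substitute: part~\ref{item: cor-conv to stationarity-interior-prox_markov} follows by applying items \ref{item: conv to stationarity- alg 2 iid-extended} or \ref{item: conv to stationarity- alg 2-extended} of the extended theorem, whose right-hand sides are of the form $C/\sqrt{N}$ for a constant $C$ depending on $\Delta_0$, $L$, $t_{\odot}$, $t_{\text{hit}}$, $\rho$, $\mu$, and $|\mathcal{V}|$ but not on $N$, yielding the $O(N^{-1/2})$ rate after absorbing the factor $(c \wedge 1)^{-1}$. Likewise, part~\ref{item: cor-conv to stationarity-iterior-DR} follows from item~\ref{item: conv to stationarity- alg 1-extended}: there the numerator is $\Delta_0 + \sqrt{\frac{2L}{\pi_{\text{min}}}C_N\Delta_0} + (3 + t_{\odot})C_N L$, which under Assumption~\ref{assumption: sequences} ($\sum r_n^2 < \infty$) remains bounded as $N \to \infty$ since $C_N$ converges, so the whole expression is $O\big((\sum_{n=1}^N 1 \wedge r_{n+1})^{-1}\big)$, matching the stated rate after noting $1 \wedge r_{n+1}$ and $1 \wedge r_n$ differ only by the monotonicity of $(r_n)$ and a single boundary term.

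The main obstacle—though it is conceptually mild—is verifying the feasibility of the descent direction in the interior case and making the factor $(c \wedge 1)^{-1}$ explicit, since one must be careful that the unit-ball constraint $\norm{\param - \param_n} \leq 1$ in the definition of $O_f$ interacts correctly with the distance bound $c$ when $c < 1$; the clean choice is to use step length exactly $c \wedge 1$ so that both the feasibility and the norm constraint hold simultaneously. The only other point requiring a sentence of care is confirming that the boundedness of $C_N$ in part~\ref{item: cor-conv to stationarity-iterior-DR} lets one treat the numerator as a constant, which is exactly where Assumption~\ref{assumption: sequences}'s square-summability is used.
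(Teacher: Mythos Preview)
Your proposal is correct and follows essentially the same approach as the paper: both arguments pick the descent direction $-\nabla f(\param_n)/\norm{\nabla f(\param_n)}$ (scaled by $c$ in the interior case) to lower-bound $O_f(\param_n)$ by a constant times $\norm{\nabla f(\param_n)}$, and then read off the rates from Theorem~\ref{thm: convergence rates to stationarity-extended}. Since the hypothesis already has $c \in (0,1]$, your $c \wedge 1$ is just $c$, matching the paper's factor $1/c$; and your observation that $C_N$ is bounded by Assumption~\ref{assumption: sequences} is exactly what the paper uses implicitly when it says ``the proof of \eqref{eq: cor-conv to stationarity-interior-DR} is similar.''
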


  Notice that we may take $r_n = \frac{1}{\sqrt{n} \log n}$ in Algorithm \ref{RMISO_DR}. Then Corollary \ref{cor: conv rates to stationarity-param in interior} implies 
  \begin{align}
         \min_{1 \leq n \leq N} \E\left[\norm{\nabla f(\param_n)}\right] = O \left( \frac{\log N}{\sqrt{N}} \right)
        \end{align}
  holds for case \ref{case: diminishing_radius-markovian}. 
  
 Finally, Corollary \ref{cor: iteration complexity} states the iteration complexity of Algorithms \ref{RMISO} and \ref{RMISO_DR}. 

\begin{corollary}[Iteration Complexity]
\label{cor: iteration complexity}
Algorithms \ref{RMISO} and \ref{RMISO_DR} have the following worst case iteration complexity:
\begin{enumerate}[label=(\roman{*})]
    \item \label{item: pr iteration complexity} Let $(\param_n)_{n \geq 0}$ be an output of Algorithm \ref{RMISO}. Assume case \ref{case: prox_regularization-iid} or \ref{case: prox_regularization-markovian}. Then $N_{\eps}(\param_0) = O(\eps^{-2})$.
    \item \label{item: dr iteration complexity} Let $(\param_n)_{n \geq 0}$ be an output of Algorithm \ref{RMISO_DR}. Assume case \ref{case: diminishing_radius-markovian}. Then $N_{\eps}(\param_0) = O(\eps^{-2}(\log \eps^{-1})^2)$.
\end{enumerate}
\end{corollary}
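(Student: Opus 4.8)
The plan is to obtain both complexity bounds by directly inverting the expected-rate estimates already furnished by Theorem \ref{thm: convergence rates to stationarity}. Throughout, I read $N_\eps(\param_0)$ as the least $N$ guaranteeing $\min_{1 \leq n \leq N} \E[O_f(\param_n)] \leq \eps$, so that the task reduces to solving the rate bounds for $N$ as a function of $\eps$. No new probabilistic machinery is needed; the work is entirely in the algebra of inverting the bounds and, for the diminishing-radius case, in pinning down the asymptotics of two deterministic sums.

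For part \ref{item: pr iteration complexity}, I would invoke items \ref{item: conv to stationarity- alg 2 iid} and \ref{item: conv to stationarity- alg 2} of Theorem \ref{thm: convergence rates to stationarity}, which cover Cases \ref{case: prox_regularization-iid} and \ref{case: prox_regularization-markovian} respectively. In both cases the bound has the shape $\min_{1 \leq n \leq N} \E[O_f(\param_n)] \leq C N^{-1/2}$, with $C = 2\sqrt{2\Delta_0 L t_{\odot}}$ under Case \ref{case: prox_regularization-iid} and $C = 2\sqrt{2\Delta_0(Lt_{\odot} + (2t_{\text{hit}}+1)\log_2(4|\mathcal{V}|))}$ under Case \ref{case: prox_regularization-markovian}. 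Setting $C N^{-1/2} \leq \eps$ and solving gives $N \geq C^2 \eps^{-2}$, so the least qualifying $N$ is at most $\lceil C^2 \eps^{-2}\rceil$, which is $O(\eps^{-2})$ since $C$ is a constant independent of $\eps$. This settles part \ref{item: pr iteration complexity}.

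For part \ref{item: dr iteration complexity}, I would specialize the diminishing-radius bound \ref{item: conv to stationarity- alg 1} to the schedule $r_n = (\sqrt{n}\,\log n)^{-1}$ for $n \geq 2$ (adjusting the first term), which satisfies Assumption \ref{assumption: sequences}: it is non-increasing with $\sum_n r_n = \infty$ and $\sum_n r_n^2 < \infty$. Two asymptotic facts drive the computation. First, $C_N = \sum_{n=1}^N r_n^2 = \sum_{n=1}^N \frac{1}{n(\log n)^2}$ is bounded uniformly in $N$ by the integral test, so the numerator in \ref{item: conv to stationarity- alg 1} is $O(1)$. Second, since $r_{n+1} < 1$ for all but finitely many $n$, a comparison of $\sum_{n=1}^N r_{n+1}$ with $\int^{\sqrt{N}} \frac{du}{\log u} = \mathrm{li}(\sqrt{N})$ gives $\sum_{n=1}^N (1 \wedge r_{n+1}) = \Theta(\sqrt{N}/\log N)$. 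Combining, the rate is $\min_{1 \leq n \leq N} \E[O_f(\param_n)] = O(\log N / \sqrt{N})$.

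It remains to invert $\log N / \sqrt{N} \leq \eps$. Substituting the ansatz $N = K\eps^{-2}(\log \eps^{-1})^2$ yields $\sqrt{N} = \sqrt{K}\,\eps^{-1}\log\eps^{-1}$ and $\log N = 2\log\eps^{-1} + O(\log\log\eps^{-1})$, whence $\log N/\sqrt{N} \sim (2/\sqrt{K})\,\eps$; choosing $K$ a large enough absolute constant makes the left side at most $\eps$ for all small $\eps$, giving $N_\eps(\param_0) = O(\eps^{-2}(\log\eps^{-1})^2)$. I expect this final inversion to be the only delicate point: one must keep track of the lower-order $\log\log$ terms to confirm that the $(\log\eps^{-1})^2$ factor, rather than any larger power, suffices, which is exactly why the $\tilde{O}$ notation in Table \ref{tab: complexity_comparison} hides a squared logarithm here.
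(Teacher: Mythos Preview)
Your proposal is correct and follows essentially the same approach as the paper: invoke the $O(N^{-1/2})$ rates from Theorem \ref{thm: convergence rates to stationarity} for part \ref{item: pr iteration complexity}, and for part \ref{item: dr iteration complexity} specialize to $r_n = 1/(\sqrt{n}\log n)$, obtain the rate $O(\log N/\sqrt{N})$, and invert. The paper carries out the inversion by directly checking that $n \geq \eps^{-2}(3\log\eps^{-1})^2$ forces $n^{-1/2}\log n \leq \eps$, while you use an ansatz with an unspecified constant $K$; both are equivalent, and your treatment of the denominator via the logarithmic integral is slightly more explicit than the paper's.
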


\begin{remark}[Comparison with the lower bound of \cite{even2023stochastic} Theorem 1]
Using our notation, the lower bound given in Theorem 1 of \cite{even2023stochastic} is
\begin{align}
    \norm{\nabla f(\param_N)}^2 = \Omega \left(L \Delta_0 \left(\frac{t_{\text{hit}}}{N}\right)^2\right).
\end{align}
Our convergence rates are given in terms of $\norm{\nabla f(\theta_n)}$ rather than $\norm{\nabla f(\theta_n)}^2$, so in our setting this is 
\begin{align}
    \norm{\nabla f(\param_N)} = \Omega \left(\sqrt{L \Delta_0} \left(\frac{t_{\text{hit}}}{N}\right)\right).
\end{align}
Notice that the rate of convergence in the lower bound is $O(N^{-1})$ while our upper bound gives a rate of convergence of $O(N^{-1/2})$. Thus, despite the dependence in our results on $t_{\odot}$, they do not contradict this lower bound.

\end{remark}

\begin{remark}[Optimal sampling and estimating $t_{\odot}$ and $t_{\text{hit}}$]
The dependence of the convergence rates on $t_{\odot}$ or $t_{\text{hit}}$ in Theorem \ref{thm: convergence rates to stationarity-extended} suggests one can accelerate convergence by choosing a sampling algorithm with the smallest values of these constants appropriate for the context. In general, an optimal sampling scheme is problem dependent. The best one can hope for, in terms of dependence on $|\mathcal{V}|$, is that both constants are $O(|\mathcal{V}|)$ which is achieved by i.i.d sampling. However, this may not be feasible in settings like decentralized optimization where communication can only occur between neighboring vertices in a graph.

Depending on the graph topology, it is likely that for the standard random walk $t_{\text{hit}}$ and $t_{\odot}$ are much larger than $|\mathcal{V}|$, especially for sparse graphs. If a cycle containing all nodes in the graph exists, our theory suggests using cyclic sampling by traversing such a spanning cycle. In this case, both $t_{\text{hit}}$ and $t_{\odot}$ have optimal order $O(|\mathcal{V}|)$. If no such cycle exists, a good way to minimized $t_{\text{hit}}$ is to find the shortest path in the graph which contains all vertices and then sample the vertices deterministically in order by walking over this path. This idea holds in a more general setting beyond optimization on graphs: a good way to minimize $t_{\text{hit}}$ is to sample data as efficiently as possible by covering the dataset with the fewest possible number of repeats.

For many specific instances, these quantities can be estimated analytically. For random walks on graphs, much about the hitting time and target time is known through classical Markov chain theory \cite{levin2017markov}. For cyclic sampling and random reshuffling respectively, one has $t_{\text{hit}} = |\mathcal{V}|$ and $t_{\text{hit}} \leq 2|\mathcal{V}|$ since each data point is visited exactly once every epoch and no re-shuffling occurs in the cyclic case. Under cyclic sampling, for fixed $n$ and for each $1 \leq k \leq |\mathcal{V}|$ there is a $v$ with $\mathbb{E}[\tau_{n, v} | \cF_n] = k$. So $t_{\odot}$ is the largest possible value of $\sum_{v \in \mathcal{V}} \sigma_v\pi(v)$ where $\sigma$ ranges over all permutations of ${1, ..., |\mathcal{V}|}.$ In particular, if $\pi$ is uniform, $t_{\odot} = \frac{|\mathcal{V}|-1}{2}$. For reshuffling with uniform $\pi$, $t_{\odot}$ is at least this large by considering a time $n$ at the beginning of an epoch. But it still holds $t_{\odot} \leq 2|\mathcal{V}|$ since $t_{\odot} \leq t_{\text{hit}}$. If these quantities cannot be easily estimated analytically, they can be approximated using Monte-Carlo. 

\end{remark}

\begin{remark}[Comparison with i.i.d sampling]
\normalfont
 If the sequence $(v_n)$ is formed by sampling vertices uniformly at random from $\mathcal{V}$ then, as previously mentioned, the return times $\tau_{n, v}$ are i.i.d geometric random variables with parameter $\frac{1}{|\mathcal{V}|}$. Then $\E[\tau_{n, v}| \cF_n] = |\mathcal{V}|$ for each $n$ and $v$ so $t_{\odot} = |\mathcal{V}|$. Substituting $|\mathcal{V}|$ for $t_{\odot}$ in the optimal bound in Theorem \ref{thm: convergence rates to stationarity} \ref{item: conv to stationarity- alg 2 iid} we recover the result given for MISO in the i.i.d setting in \cite{Karimi2022MISSO} up to a factor of two. This is in-spite of the fact that our analytical approach is necessarily different to handle general recurrent sampling and shows that our results are tight.
\end{remark}

\begin{remark}[Iterate stability and regularization]\label{rem: iterate stability and regularization}
\normalfont
Here we give some remarks on the use of diminishing radius and proximal regularization in Algorithms \ref{RMISO} and \ref{RMISO_DR}.

The diminishing radius restriction in Algorithm \ref{RMISO_DR} is a `hard' regularization technique. It bakes necessary iterate stability directly into the problem by enforcing the stronger condition $\norm{\param_n - \param_{n-1}} \leq r_n$. In comparison with proximal regularization, diminishing radius bounds the one step iterate difference by a deterministic quantity. Moreover, as is argued in the proof of Theorem \ref{thm: a.s. convergence} \ref{item: stationary_limit_points} and the preceding propositions, sufficiently often the iterate $\param_n$ obtained by minimizing the $\bar{g}_n$ over the trust region in fact minimizes $\bar{g}_n$ over the entire feasible set $\Param$. This allows us to prove that limit points of the iterates produces by Algorithm \ref{RMISO_DR} are stationary even for  general recurrent sampling schemes. However, diminishing radius introduces the drawback of needing to compute a projection at each step of the optimization process. 

Compared to diminishing radius, proximal regularization is a form of `soft' regularization. It is less restrictive than diminishing radius, but only allows us to derive a weaker form of iterate stability: we only have $\sum_{n = 1}^{\infty} \norm{\param_n - \param_{n-1}}^2 < \infty$ instead of the stronger $\norm{\param_n - \param_{n-1}} \leq r_n$ which makes some aspects of the analysis slightly more challenging. 

As mentioned in Section \ref{sec: assumptions}, the use of dynamic proximal regularization in Case \ref{case: prox_regularization-markovian} adapts to the sampling process and increasingly penalizes large values of $\norm{\param_n - \param_{n-1}}$ as the amount of time any vertex is left un-visited increases. The drawback is that we are unable to prove almost sure asymptotic convergence in Theorem \ref{thm: a.s. convergence} for Case \ref{case: prox_regularization-markovian} as we are for Case \ref{case: diminishing_radius-markovian}. If we could show $\rho_n \norm{\param_n - \param_{n-1}} \to 0$, then asymptotic convergence would follow. However $\rho_n$ is not bounded and can take arbitrarily large values, albeit with low probability. But as we show in Lemma \ref{lem: uniform bound of reg parameter}, $\E[\rho_n]$ is uniformly bounded, 
which allows us to deduce $\E[\rho_n \norm{\param_n - \param_{n-1}}] \to 0$ and leads to the $L^{1}$-convergence result in Theorem \ref{thm: a.s. convergence}. 

For case \ref{case: prox_regularization-iid} it is relatively straightforward to show $\rho_n \norm{\param_n - \param_{n-1}} \to 0$ since $\rho_n$ is constant. In this case, the difficulty lies in showing $\norm{\nabla \bar{h}_n(\param_n)} \to 0$.  We \emph{are} able to prove this for Case \ref{case: prox_regularization-markovian} however because the use of dynamic proximal regularization allows us to show that the sequence $\max_{v \in \mathcal{V}}(n - k^v(n))\norm{\param - \param_{n - 1}}^2$ is summable. More detail is given in the proofs and discussion in subsection \ref{sec: dpr a.s. convergence}.
    
\end{remark}



\section{Convergence analysis using mixing times} \label{sec: mixing time analysis}
In this section, we give an overview of the standard pipeline for analyzing stochastic optimization algorithms with Markovian data and discuss the difficulties of applying these techniques to the analysis of MISO. 

The analysis of first order methods such as SGD generally rely on conditionally unbiased gradient estimates. In the context of solving problem \eqref{eqn:main_problem}, this is to require 
\begin{align}
    \E[\nabla f^{v_n}(\param_{n-1})|\cF_{n-1}] = \nabla f(\param_{n-1})
    \label{eq: unbiased gradient}
\end{align}
where $\cF_n$ is the filtration of information up to time $n$. 
 However, in the dependent data setting \eqref{eq: unbiased gradient} does not hold which complicates the analysis significantly. Previous works (e.g \cite{SunOMCGD, bhandari2018TDLearning, nagaraj2020least, lyu2020online, lyu2022online, lyu2023srmm, AlacaogluNonconvex2023}) use a "conditioning on the distant past" argument. Specifically, for SGD, one assumes that $(v_n)$ is a Markov chain on state space $\mathcal{V}$ which mixes exponentially fast to its stationary distribution $\pi$ with parameter $\lambda$. One then considers the quantity
 \begin{align}
 \E[\nabla f^{v_n}(\param_{n-a_n})|\cF_{n-a_n}]
 \end{align}
 where $a_n$ is a slowly growing sequence satisfying $\sum_{n \geq 1} \lambda^{a_n} < \infty$. 
 By further assuming either uniformly bounded gradients as in \cite{SunOMCGD} or that the conditional expectations $\E[\norm{\nabla f^{v_{n+1}}(\param)}|\cF_n]$ are uniformly bounded as in \cite{AlacaogluNonconvex2023}, one can show that
 \begin{align}
     \left \lVert \nabla f(\param_{n- a_n}) - \E[\nabla f^{v_n}(\param_{n - a_n}) |\cF_{n - a_n}] \right \rVert = O(\lambda^{a_n}).
\label{eq: sgd_mixing_bound}
 \end{align}
 Using Lipschitz continuity of gradients, it is then established that 
 \begin{align}
     \norm{\nabla f(\param_n) - \E[\nabla f^{v_n}(\param_n)|\cF_{n-a_n}]} = O(\lambda^{a_n}) + O(\norm{\param_n - \param_{n-a_n}}),
\label{eq: sgd_mixing_lipschitz_bound}
 \end{align}
 allowing one to control the bias in the stochastic gradient estimate. Conventional analysis may then be used to prove 
 \begin{align}
     \sum_{n = 1}^{\infty} \gamma_n \E \left[ \langle \nabla f(\param_n), \nabla f^{v_n}(\param_n) \rangle \right] < \infty,
 \end{align}
 where $\gamma_{n}$ denotes the stepsize at iteration $n$. Combining this with \eqref{eq: sgd_mixing_lipschitz_bound}, it can finally be deduced that 
 \begin{align}    
 \sum_{n = 1}^{\infty} \gamma_n \E[\norm{\nabla f(\param_n)}^2] < \infty
 \end{align}
 for an appropriate stepsize $\gamma_n$ which gives the desired convergence rate. 

 This technique was further developed to analyze convergence rates of \emph{stochastic majorization minimization} (SMM) \cite{mairal2013stochastic} algorithms under Markovian sampling in \cite{lyu2023srmm}. In the context of problem \eqref{eqn:main_problem}, SMM proceeds by minimizing a recursively defined majorizing surrogate of the empirical loss function. The algorithm is stated concisely as follows:
 \begin{align}\label{eq:SMM}
 (\textbf{SMM}):
 \begin{cases}
      &\text{Sample } v_n \text{ from the conditional distribution } \pi(\cdot| \cF_{n-1})\\
     &g_n \leftarrow \text{Strongly convex majorizing surrogate of $f^{v_n}$}\\
     &\param_n \in \argmin_{\param \in \Param}\left( \bar{g}_n(\param) := (1 - w_n)\bar{g}_{n-1}(\param) + w_ng_n(\param) \right)
\end{cases}
 \end{align}
 where $(w_n)_{n \geq 1}$ is a non-increasing sequence of weights. A crucial step in the analysis of SMM is to show that 
 \begin{align}
     \sum_{n = 1}^{\infty} w_n \E[|\bar{g}_n(\param_n) - \bar{f}_n(\param_n)|] < \infty
\label{eq: srmm gap sum}
 \end{align}
where $\bar{f}_n$ is the empirical loss function satisfying the recursion $\bar{f}_n(\param) := (1 - w_n) \bar{f}_{n-1}(\param) + w_n f^{v_n}(\param)$. To do so, the problem is reduced to showing that 
\begin{align}
    \E \left[ \E[f^{v_n}(\param_{n - a_n}) - \bar{f}_n(\param_{n - a_n}) | \cF_{n - a_n}]^{+} \right] = O(w_{n - a_n}) + O(\lambda^{a_n})
\label{eq: srmm prop 8.1}
\end{align}
(see Proposition 8.1 in \cite{lyu2023srmm}) which is similar to \eqref{eq: sgd_mixing_bound}. By additionally assuming Lipschitz continuity of the individual loss functions $f^v$, it is then shown that
\begin{align}
\E\left[\E[f^{v_n}(\param_n)  - \bar{f}_n(\param_n)|\cF_{n-a_n}] \right] = O(w_{n - a_n}) + O(\lambda^{a_n}) + O(\norm{\param_n - \param_{n - a_n}})
\label{eq: srmm_mixing_lipschitz}
\end{align}
which may be compared with \eqref{eq: sgd_mixing_lipschitz_bound}. Finally, \eqref{eq: srmm gap sum} is proved by showing
\begin{align}
    \sum_{n = 1}^{\infty} w_n\E[|\bar{g}_n(\param_n) - \bar{f}_n(\param_n)|] \leq \bar{g}_1(\param_1) + \sum_{n = 1}^{\infty}w_{n} \E[ f^{v_n}(\param_n) - \bar{f}_n(\param_n)]
\end{align}
and using the bound \eqref{eq: srmm_mixing_lipschitz} to conclude that the latter sum is finite.

MISO and its extensions proposed in this paper are similar to SMM in their use of the majorization-minimization principle, but contain a few key differences. Put shortly, the original implementation of MISO is
\begin{align}\label{eq: miso sketch}
    \textbf{(MISO)}: 
    \begin{cases}
        \text{Sample } v_n \text{ from the conditional distribution } \pi(\cdot| \cF_{n-1})\\
        g_n^{v_n} \leftarrow \text{Convex surrogate of $f^{v_n}$ at $\param_{n-1}$; $g_n^v = g_{n-1}^v$ for $v \neq v_n$}\\
        \param_n \in \argmin_{\param \in \Param} \left( \bar{g}_n(\param) := \sum_{v \in \mathcal{V}} g_n^v \pi(v) \right)
    \end{cases}
\end{align}
In contrast with SMM, each surrogate defining $\bar{g}_n$ is given a constant weight. Consequently, the additional control provided by the decreasing weights $(w_n)$ in SMM is not present, which makes the adaptation of the techniques used in the analysis of SMM non-trivial. 

A key step in the original analysis of MISO given in \cite{marial2015miso} is to show
\begin{align}
\sum_{n = 1}^{\infty}  \E[\bar{h}_n(\param_n)] < \infty.
\label{eq: miso_intro gap sum}
\end{align}
 This is similar to \eqref{eq: srmm gap sum} and shows that the averaged surrogate is an asymptotically accurate approximation of the true objective at all $\param_n$s. To prove this, one needs to relate the averaged error $\bar{h}_n(\param_n)$ to another quantity proven to be summable through other means. This is, in abstract, the role that mixing rate analysis plays for SGD and SMM. Using techniques from \cite{marial2015miso} one can prove 
\begin{align}
    \sum_{n = 1}^{\infty} \E[h_n^{v_{n+1}}(\param_n)] < \infty.
\end{align}
If the $(v_n)$ are drawn i.i.d from $\pi$, or more generally if the probability of transitioning between any two vertices is uniformly bounded below by a positive quantity, then $h_n^{v_{n+1}}(\param_n)$ is a conditionally unbiased estimate of $\bar{h}_n(\param_n)$ up to a constant. Then \eqref{eq: miso_intro gap sum} follows by conditioning on the most recent information $\cF_n$. 

To adapt the analysis to a more general setting, one may attempt to use Markov chain mixing to show 
\begin{align}
    |\bar{h}_n(\param_{n - a_n}) - \E[h_n^{v_{n+1}}(\param_{n-a_n}) |\cF_{n-a_n}]| = O(\lambda^{a_n})
\end{align}
 similar to \eqref{eq: sgd_mixing_bound} and \eqref{eq: srmm prop 8.1}. However, an additional complication arises because the function $h_n^{v_{n+1}}$ conditional on $\mathcal{F}_{n-a_{n}}$ depends on both the history of data samples $v_{k}$ \textit{and} the estimated parameters $\param_{k}$ for $n-a_{n}\le k \le n$. In contrast,  $f^{v_{n}}$ for SGD (in \eqref{eq: sgd_mixing_bound}) depends only on the last data sample $v_{n}$ and $\bar{f}_{n}$ for SMM (in \eqref{eq: srmm prop 8.1}) depends  only on the histrory of data samples $v_{n-a_{n}},\dots,v_{n}$. 
 So, one cannot use the Markov property to isolate the randomness in $h_n^{v_{n+1}}(\param_{n - a_n})$ due to the Markov chain transition over the interval $[n - a_n, n+1]$. To alleviate this problem, one may attempt to control the difference
 \begin{align}
     |h_n^{v_{n+1}}(\param_{n - a_n}) - h_{n - a_n}^{v_{n+1}}(\param_{n - a_n})|
 \end{align}
 but doing so is not straightforward without access to something resembling the weights $(w_n)$ in SMM.

\section{Preliminary Lemmas}\label{sec: preliminary lemmas}

In this section we state and prove some preliminary lemmas which will be used to prove in the proofs of both Theorem \ref{thm: convergence rates to stationarity} and \ref{thm: a.s. convergence}.

We first introduce some additional notation. Throughout this section as well as the remainder of the paper we let
\begin{align}
t_{\text{cov}} := \sup_{n \geq 1} \left \lVert \E\left[\max_{v \in \mathcal{V}} \tau_{n, v} \Big|\cF_n\right] \right \rVert_{\infty}.
\label{eq: tcov}
\end{align}
We recall that the $L_{\infty}$ norm here is taken for the conditional expectation viewed as a random variable. This quantity is a generalization of the worst case expected cover time from Markov chain theory \cite{levin2017markov} and will be important in the analysis of Algorithm \ref{RMISO} with dynamic proximal regularization. 
 
Our first result, Proposition \ref{prop: return time finite exponential moments and cover time bound}, states that under Assumption \ref{assumption: process properties} the return times have finite moments of all orders and gives an upper-bound on $t_{\text{cov}}$ in terms of $t_{\text{hit}}$. The first item will is used in Section \ref{sec: asymptotic analysis} to prove asymptotic results while the second is used in the proof of iteration complexity for Case \ref{case: prox_regularization-markovian}. 

\begin{prop}[Recurrence implies finite exponential moments of return time]\label{prop: return time finite exponential moments and cover time bound}
    Let Assumption \ref{assumption: process properties} hold and let $t_{\text{hit}}$ and $t_{\text{cov}}$ be as in \eqref{eq: thit} and \eqref{eq: tcov} respectively. 
    \begin{enumerate}[label=(\roman{*})]
        \item\label{item: return time exp moments} There exists $s_0 > 0$ so that for all $0 < s < s_0$ there is a constant $C_s > 0$ with 
        \begin{align}
            \max_{v \in \mathcal{V}}\, \sup_{n \geq 1} \,\, \E\left[e^{s\tau_{n, v}} \Big|\cF_n\right] \leq C_s < \infty.
        \end{align}
         Consequently, for each $p \geq 1$, $\max_{v \in \mathcal{V}} \sup_{n \geq 1}\E[\tau_{n, v}^p |\cF_n] \leq C < \infty$ for some $C > 0$.  
        \item\label{item: tcov bound} We have the following bound on $t_{\text{cov}}$:
        \begin{align}
            t_{\text{cov}} \leq (2 t_{\text{hit}} + 1)\log_2 (4|\mathcal{V}|).
        \end{align}
    \end{enumerate}
\end{prop}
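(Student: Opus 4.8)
The plan is to first establish a single geometric tail bound for the return time conditioned on $\cF_n$, and then read off both the exponential moment bound \ref{item: return time exp moments} and the cover time bound \ref{item: tcov bound} from it. Set $M$ to be $2t_{\text{hit}}$ rounded up to an integer, so that $M \le 2t_{\text{hit}}+1$. Applying Markov's inequality to the conditional law gives, almost surely, $\P(\tau_{n,v} > M \mid \cF_n) \le \E[\tau_{n,v}\mid\cF_n]/M \le t_{\text{hit}}/M \le 1/2$ for every $n$ and $v$. The structural input is a renewal identity: on the event $\{\tau_{n,v} > M\}$ — which depends only on $v_{n+1},\dots,v_{n+M}$ and is therefore $\cF_{n+M}$-measurable — no visit to $v$ occurs in $(n,n+M]$, so $\tau_{n,v} = M + \tau_{n+M,v}$. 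Iterating this over the blocks $\cF_{n+kM}$, and at each stage pulling the $\cF_{n+kM}$-measurable indicator $\1(\tau_{n,v} > kM)$ outside the inner conditional expectation before applying the one-step bound $\P(\tau_{n+kM,v} > M \mid \cF_{n+kM}) \le 1/2$, yields
\[
\P(\tau_{n,v} > kM \mid \cF_n) \le 2^{-k} \quad \text{a.s., for all } k \ge 0.
\]

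Part \ref{item: return time exp moments} then follows by summation. Writing $\E[e^{s\tau_{n,v}}\mid\cF_n] \le \sum_{k\ge0} e^{s(k+1)M}\,\P(\tau_{n,v} > kM\mid\cF_n) \le e^{sM}\sum_{k\ge0}(e^{sM}/2)^k$, the series converges whenever $e^{sM} < 2$, i.e. for $s < s_0 := (\ln 2)/M$, with uniform constant $C_s = e^{sM}/(1 - e^{sM}/2)$. The polynomial moment claim is immediate from the elementary inequality $x^p \le (p/(es))^p e^{sx}$ for $x \ge 0$, which gives $\E[\tau_{n,v}^p\mid\cF_n] \le (p/(es))^p C_s$ for any fixed $s \in (0,s_0)$.

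For part \ref{item: tcov bound}, a union bound over $\mathcal{V}$ upgrades the tail estimate to $\P(\max_{v}\tau_{n,v} > kM\mid\cF_n) \le \min(1,\,|\mathcal{V}|2^{-k})$. Since $X := \max_v \tau_{n,v}$ is integer-valued, $X \le M\lceil X/M\rceil = M\sum_{k\ge0}\1(X > kM)$, so taking conditional expectations gives
\[
\E\Big[\max_{v}\tau_{n,v}\,\Big|\,\cF_n\Big] \le M\sum_{k\ge0}\min(1,\,|\mathcal{V}|2^{-k}).
\]
Splitting the series at the crossover $k \approx \log_2|\mathcal{V}|$ — bounding the head by $1$ over roughly $\log_2|\mathcal{V}|$ terms and the remaining geometric tail by a constant — bounds the sum by $\log_2(4|\mathcal{V}|)$. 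Combined with $M \le 2t_{\text{hit}}+1$, this is the claimed a.s. bound $(2t_{\text{hit}}+1)\log_2(4|\mathcal{V}|)$; since it holds uniformly in $n$, taking the $L_\infty$ norm and the supremum over $n$ gives the bound on $t_{\text{cov}}$.

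The main obstacle is the renewal argument underlying the geometric tail. Care is needed to verify the shift identity $\tau_{n,v} = M + \tau_{n+M,v}$ on the no-return event and to justify the repeated conditioning, because Assumption \ref{assumption: process properties} only provides almost-sure control of $\E[\tau_{n,v}\mid\cF_n]$ (hence the $L_\infty$ formulation) and the process $(v_n)$ is \emph{not} assumed to be Markov — so the argument must rely solely on the measurability of the block events and the tower property rather than any transition structure. Once the tail bound is secured, parts \ref{item: return time exp moments} and \ref{item: tcov bound} are routine summations.
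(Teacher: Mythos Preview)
Your proposal is correct and follows essentially the same route as the paper: choose $M=\lceil 2t_{\text{hit}}\rceil$, use Markov's inequality plus the tower property over successive $\cF_{n+jM}$ to obtain the geometric tail $\P(\tau_{n,v}>kM\mid\cF_n)\le 2^{-k}$, then read off the exponential moment bound by summation and the cover-time bound via a union bound and the split of $\sum_{k\ge0}\min(1,|\mathcal{V}|2^{-k})$ at $k\approx\log_2|\mathcal{V}|$. The paper phrases the key step as the event inclusion $\{\tau_{n,v}\ge km\}\subseteq\bigcap_{j=0}^{k-1}\{\tau_{n+jm,v}\ge m\}$ rather than your shift identity $\tau_{n,v}=M+\tau_{n+M,v}$, but these are equivalent formulations of the same renewal argument, and the remaining computations match line for line (including the final arithmetic $\log_2|\mathcal{V}|+2=\log_2(4|\mathcal{V}|)$).
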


\begin{proof}
    Let $m$ be the smallest integer satisfying $m \geq 2 t_{\text{hit}}$. For any $n \geq 1$ and $v \in \mathcal{V}$ notice that if $\tau_{n, v} \geq km$ then we must have $\tau_{n + jm, v} \geq m$ for each $0 \leq j \leq k - 1$. Then
    \begin{align}
        \P (\tau_{n, v} \geq km |\cF_n) \leq \P \left( \bigcap_{j = 0}^{k-1} \{\tau_{n + jm, v} \geq m\} \Big|\cF_n \right) = \E \left[ \prod_{j = 0}^{k-1} \1(\tau_{n + jm, v} \geq m) \Big|\cF_n \right].
    \end{align}
    We have 
    \begin{align}
        \E \left[ \prod_{j = 0}^{k-1} \1(\tau_{n + jm, v} \geq m) \Big|\cF_n \right] = \E \left[\E\big[\1(\tau_{n + (k-1)m, v} \geq m)|\cF_{n + (k-1)m}\big] \prod_{j = 0}^{k-2}\1(\tau_{n + jm, v} \geq m) \Big| \cF_n\right]
    \end{align}
    where we have used that $\{\tau_{n + jm, v} \geq m\}$ is measurable with respect to $\cF_{n + (k - 1)m}$ for each $j \leq k - 2$. By Markov's inequality and Assumption \ref{assumption: process properties}
    \begin{align}
        \E\big[\1(\tau_{n + (k-1)m, v} \geq m)|\cF_{n + (k-1)m}\big] = \P\left(\tau_{n + (k-1)m, v} \geq m |\cF_{n + (k-1)m}\right) \leq \frac{t_{\text{hit}}}{m}.
    \end{align}
    So 
    \begin{align}
        \E \left[ \prod_{j = 0}^{k-1} \1(\tau_{n + jm, v} \geq m) \Big|\cF_n \right] \leq \frac{t_{\text{hit}}}{m} \E \left[ \prod_{j = 0}^{k-2} \1(\tau_{n + jm, v} \geq m) \Big |\cF_n \right].
    \end{align}
    Proceeding by induction it follows that
    \begin{align}
        \P(\tau_{n, v} \geq km |\cF_n) \leq \left( \frac{t_{\text{hit}}}{m} \right)^k \leq 2^{-k}
    \end{align}
    with the second inequality using our choice of $m$. Now,
    \begin{align}
        \E\left[e^{s \tau_{n, v}}|\cF_n\right] = \sum_{\ell = 1}^{\infty} e^{s \ell}\P(\tau_{n, v} = \ell|\cF_n) \leq \sum_{k = 0}^{\infty} e^{s(k + 1)m} \P(\tau_{n, v} \geq km |\cF_n) \leq \sum_{k = 0}^{\infty} e^{s(k + 1)m} 2^{-k}.
    \end{align}
    The latter sum is finite if $s < \frac{\log 2}{2m}$ and does not depend on $n$ or $v$ which shows \ref{item: return time exp moments}. 

    With $n$ still fixed let $\tau_{\text{cov}} = \max_{v \in \mathcal{V}} \tau_{n, v}$. We have
    \begin{align}
        \E[\tau_{\text{cov}}|\cF_n] = \sum_{\ell = 1}^{\infty} \P(\tau_{\text{cov}} \geq \ell|\cF_n) \leq \sum_{k = 0}^{\infty} m\P(\tau_{\text{cov}} \geq km |\cF_{n}) \leq (2t_{\text{hit}} + 1)\sum_{k = 0}^{\infty} \P(\tau_{\text{cov}} \geq km |\cF_n)
        \label{eq: t_cov expecation bound}
    \end{align}
    since $\P(\tau_{\text{cov}} \geq \ell |\cF_n)$ is a decreasing function of $\ell$ and $m \leq 2t_{\text{hit}} + 1$. By a union bound
    \begin{align}
        \P(\tau_{\text{cov}} \geq km |\cF_n) \leq 1 \wedge \sum_{v \in \mathcal{V}} \P(\tau_{n, v} \geq km |\cF_n) \leq 1 \wedge |\mathcal{V}|2^{-k}.
    \end{align}
    Summing a geometric series we get
    \begin{align}
        \sum_{k = 0}^{\infty} \P(\tau_{\text{cov}} \geq km |\cF_n) \leq \log_2|\mathcal{V}| + |\mathcal{V}| \sum_{k > \log_2\mathcal{|V|}} 2^{-k} \leq \log_2|\mathcal{V}| + 2.
        \label{eq: t_cov sum bound}
    \end{align}
    Combining \eqref{eq: t_cov expecation bound} and \eqref{eq: t_cov sum bound} shows \ref{item: tcov bound}. 
\end{proof}

The next proposition states some general properties of first order surrogate functions.

\begin{prop}[Properties of Surrogates]
\label{prop: surrogate properties}
Fix $\bar{\param} \in \Param$ and $f : \Param \to \R$. Let $g \in \mathcal{S}_{L}(f, \bar{\param})$ and let $\param'$ be a minimizer of $g$ over $\Param$. Then for all $\param \in \Param$
\begin{align}
    |h(\param)| \leq \frac{L}{2}\norm{\param - \bar{\param}}^2
\end{align}
\end{prop}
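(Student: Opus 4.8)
The plan is to recognize the claimed inequality as the standard quadratic (descent-lemma) estimate for a function whose gradient is Lipschitz and which vanishes to first order at the base point. Everything needed is already packaged in Definition~\ref{def: surrogates}(ii): since $g \in \mathcal{S}_{L}(f, \bar{\param})$, the approximation error $h = g - f$ is differentiable with $L$-Lipschitz gradient, and moreover $h(\bar{\param}) = 0$ and $\nabla h(\bar{\param}) = 0$. The minimizer $\param'$ and the majorization property (i) are not needed for this particular bound (property (i) only forces $h \geq 0$, which makes the absolute value automatic but is not required for the upper estimate); they presumably feed into other parts of the proposition.

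First I would express $h(\param) - h(\bar{\param})$ as a line integral of the gradient along the segment from $\bar{\param}$ to $\param$, which lies in $\Param$ by convexity of $\Param$. By the fundamental theorem of calculus applied to $t \mapsto h(\bar{\param} + t(\param - \bar{\param}))$ on $[0,1]$,
\[
h(\param) - h(\bar{\param}) = \int_0^1 \big\langle \nabla h(\bar{\param} + t(\param - \bar{\param})), \, \param - \bar{\param} \big\rangle \, dt.
\]
Subtracting the first-order term $\langle \nabla h(\bar{\param}), \param - \bar{\param}\rangle$ (which equals $0$) rewrites the integrand as $\langle \nabla h(\bar{\param} + t(\param - \bar{\param})) - \nabla h(\bar{\param}), \, \param - \bar{\param}\rangle$.

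Next I would bound the magnitude of this integral using Cauchy--Schwarz followed by the $L$-Lipschitz continuity of $\nabla h$, which gives an integrand bounded by $L t \norm{\param - \bar{\param}}^2$, and then integrate $\int_0^1 L t \, dt = L/2$. Since $h(\bar{\param}) = 0$ and $\nabla h(\bar{\param}) = 0$, the left-hand side collapses to $|h(\param)|$, yielding
\[
|h(\param)| \leq \frac{L}{2} \norm{\param - \bar{\param}}^2.
\]

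There is no real obstacle here: this is the classical estimate obtained from a Lipschitz gradient together with vanishing zeroth- and first-order data at $\bar{\param}$. The only points requiring a word of care are that the segment $\bar{\param} + t(\param - \bar{\param})$ remains in the domain $\Param$ (guaranteed by convexity, so that $\nabla h$ is defined along the path and the fundamental theorem of calculus applies), and that differentiability with a Lipschitz gradient legitimizes the integral representation. Both are immediate from the hypotheses.
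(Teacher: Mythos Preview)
Your proof is correct and is essentially the same as the paper's: the paper simply invokes the classical descent lemma (their Lemma~\ref{lem: L-smooth_surrogate}) for the $L$-smooth function $h$ and then plugs in $h(\bar\param)=0$, $\nabla h(\bar\param)=0$, whereas you reprove that lemma inline via the integral representation. Your side remarks are accurate, including that the minimizer $\param'$ plays no role in this particular bound.
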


\begin{proof}
    This follows from using the classical upperbound for $L$-smooth functions
    \begin{align}
        h(\param) \leq h(\bar{\param}) + \langle \nabla h(\bar{\param}), \param - \bar{\param} \rangle + \frac{L}{2}\norm{\param - \bar{\param}}^2
    \end{align}
    (see Lemma \ref{lem: L-smooth_surrogate}) and noting that $h(\bar{\param})$ and $\nabla h(\bar{\param})$ are both equal to zero according to Definition \ref{def: surrogates}. 
\end{proof}

Next, we show that the surrogate objective  value $\bar{g}_{n}(\param_{n})$ evaluated at $\param_{n}$ is non-increasing. 

\begin{lemma}[Surrogate Monotonicity]\label{lem: surrogate monotonicity}
Let $(\param_n)_{n \geq 0}$ be an output of either Algorithm \ref{RMISO} or \ref{RMISO_DR} . Then $\bar{g}_n(\param_{n-1}) \leq \bar{g}_{n-1}(\param_{n-1})$ for all $n \geq 1$. Moreover, the sequence $(\bar{g}_n(\param_n))_{n \geq 0}$ is non-increasing. As a consequence, by Assumption \ref{assumption: directional derivatives} and Definition \ref{def: surrogates}, $(\bar{g}_n(\param_n))_{n \geq 0}$ is bounded below with probability one and therefore $\lim_{n \to \infty} \bar{g}_n(\param_n)$ exists almost surely.
\end{lemma}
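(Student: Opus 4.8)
The plan is to establish the one-step decrease in two stages—first comparing $\bar{g}_n$ with $\bar{g}_{n-1}$ at the fixed point $\param_{n-1}$, then comparing the new minimizer $\param_n$ against $\param_{n-1}$—and finally to invoke lower boundedness of the surrogate objectives to conclude convergence. The whole argument is elementary; the only point requiring care is the bookkeeping that separates the freshly updated surrogate (which is \emph{tight} at $\param_{n-1}$) from the stale ones (which merely \emph{majorize}).

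First I would prove $\bar{g}_n(\param_{n-1}) \leq \bar{g}_{n-1}(\param_{n-1})$. Since only the surrogate indexed by $v_n$ is modified at step $n$, the difference telescopes to a single term:
\begin{align*}
\bar{g}_n(\param_{n-1}) - \bar{g}_{n-1}(\param_{n-1}) = \pi(v_n)\left( g_n^{v_n}(\param_{n-1}) - g_{n-1}^{v_n}(\param_{n-1}) \right).
\end{align*}
Because $g_n^{v_n}$ is a first-order surrogate of $f^{v_n}$ at $\param_{n-1}$, Definition \ref{def: surrogates}(ii) forces $h_n^{v_n}(\param_{n-1}) = 0$, i.e.\ $g_n^{v_n}(\param_{n-1}) = f^{v_n}(\param_{n-1})$. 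Meanwhile the old surrogate $g_{n-1}^{v_n}$ still majorizes $f^{v_n}$ on $\Param$ by Definition \ref{def: surrogates}(i), so $g_{n-1}^{v_n}(\param_{n-1}) \geq f^{v_n}(\param_{n-1})$. Hence the bracketed quantity is nonpositive and, as $\pi(v_n) \geq 0$, the claimed inequality follows.

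Next I would show $\bar{g}_n(\param_n) \leq \bar{g}_n(\param_{n-1})$ by exploiting feasibility of $\param_{n-1}$ in each update rule. For Algorithm \ref{RMISO}, the point $\param_{n-1}$ is feasible for the proximal subproblem, so minimality of $\param_n$ gives
\begin{align*}
\bar{g}_n(\param_n) + \tfrac{\rho_n}{2}\norm{\param_n - \param_{n-1}}^2 \leq \bar{g}_n(\param_{n-1}),
\end{align*}
and discarding the nonnegative regularizer yields $\bar{g}_n(\param_n) \leq \bar{g}_n(\param_{n-1})$. For Algorithm \ref{RMISO_DR}, the center $\param_{n-1}$ lies in $\Param \cap B_{r_n}(\param_{n-1})$, so it is again feasible and minimality gives the identical bound. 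Chaining this with the previous step produces $\bar{g}_n(\param_n) \leq \bar{g}_n(\param_{n-1}) \leq \bar{g}_{n-1}(\param_{n-1})$, which is the asserted monotonicity.

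Finally, to pass from monotonicity to convergence I would observe that each $g_n^v$ majorizes $f^v$ on $\Param$, hence $\bar{g}_n(\param) \geq \sum_{v}f^v(\param)\pi(v) = f(\param) \geq \inf_{\param \in \Param} f(\param)$ pointwise; Assumption \ref{assumption: directional derivatives} guarantees this infimum is finite. Therefore $(\bar{g}_n(\param_n))_{n \geq 0}$ is non-increasing and bounded below along every sample path, so it converges almost surely to a finite limit. There is no substantive obstacle in this lemma—the argument is a direct consequence of the majorization property and feasibility of the previous iterate—so the main thing to get right is simply the distinction noted above between the tight surrogate $g_n^{v_n}$ and the majorizing stale surrogate $g_{n-1}^{v_n}$, which is precisely what makes the single surviving term nonpositive.
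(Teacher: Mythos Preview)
Your proposal is correct and follows essentially the same approach as the paper: first isolate the single updated surrogate to get $\bar{g}_n(\param_{n-1}) \leq \bar{g}_{n-1}(\param_{n-1})$ via tightness plus majorization, then use feasibility of $\param_{n-1}$ in the respective subproblems to obtain $\bar{g}_n(\param_n) \leq \bar{g}_n(\param_{n-1})$, and finally invoke the uniform lower bound $\bar{g}_n \geq f \geq \inf_{\Param} f > -\infty$ to conclude convergence.
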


\begin{proof}
     Since $g_n^{v_n} \in \mathcal{S}_{L}(f^{v_n}, \param_{n-1})$, Definition \ref{def: surrogates} implies that $g_n^{v_n}(\param_{n-1}) = f^{v_n}(\param_{n-1})$. Then
     \begin{align}
         \bar{g}_n(\param_{n-1}) &= \bar{g}_{n-1}(\param_{n-1}) + \Big[g_n^{v_n}(\param_{n-1}) - g_{n-1}^{v_n}(\param_{n-1})\Big]\pi(v_n)\\
         &= \bar{g}_{n-1}(\param_{n-1}) + \Big[f^{v_n}(\param_{n-1}) - g_{n-1}^{v_n}(\param_{n-1}) \Big]\pi(v_n)\\
         &\leq \bar{g}_{n-1}(\param_{n-1})
     \end{align}
     where the last inequality used $g_{n-1}^{v_n}$ is a majorizing surrogate of $f^{v_n}$.
     
     Suppose now that $(\param_n)_{n \geq 0}$ is an output of Algorithm \ref{RMISO}. Then by definition of $\param_n$,
    \begin{align*}
        \bar{g}_n(\param_n) &\leq \bar{g}_n(\param_n) + \frac{\rho_n}{2} \norm{\param_n - \param_{n-1}}^2\\
        & \leq \bar{g}_n(\param_{n-1}) + \frac{\rho_n}{2}\norm{\param_{n-1} - \param_{n-1}}^2\\
        &= \bar{g}_n(\param_{n-1})\\
        & \leq \bar{g}_{n-1}(\param_{n-1}).
    \end{align*}
  If instead $(\param_n)_{n \geq 1}$ is an output of Algorithm \ref{RMISO_DR}, then we can directly conclude $\bar{g}_n(\param_n) \leq \bar{g}_n(\param_{n-1})$ by definition of $\param_n$. The remainder of the proof is identical to the above. 
\end{proof}

The next lemma establishes the summability of the sequence $h_n^{v_{n+1}}(\param_{n})$. This was used in \cite{marial2015miso} to prove asymptotic convergence of MISO under i.i.d. sampling. We use it primarily in the analysis of Algorithm \ref{RMISO_DR}.

\begin{lemma}\label{lem:single gap convergence}
Let $(\param_n)_{n \geq 0}$ be an output of either Algorithm \ref{RMISO} or \ref{RMISO_DR}. Then almost surely
\begin{align}
\sum_{n = 1}^{\infty} h_n^{v_{n+1}}(\param_n) \leq \frac{1}{\pi_{\text{min}}} \Delta_0.
\end{align}
\end{lemma}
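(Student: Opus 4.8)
The plan is to exploit a telescoping identity for the sequence $(\bar{g}_n(\param_n))_{n \geq 0}$, which Lemma \ref{lem: surrogate monotonicity} already guarantees is non-increasing and bounded below. The key observation is that $h_n^{v_{n+1}}(\param_n)$ measures exactly the amount by which the averaged surrogate drops when the surrogate for $v_{n+1}$ is refreshed at the next step. Everything will be pathwise (deterministic), so the almost-sure statement follows for free, and the argument applies to both algorithms since Lemma \ref{lem: surrogate monotonicity} covers both.

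First I would compute the one-step change of the averaged surrogate evaluated at the \emph{frozen} point $\param_n$. Since only the $v_{n+1}$-th surrogate is updated at step $n+1$, and the new surrogate $g_{n+1}^{v_{n+1}} \in \mathcal{S}_L(f^{v_{n+1}}, \param_n)$ is tight at $\param_n$ (so $g_{n+1}^{v_{n+1}}(\param_n) = f^{v_{n+1}}(\param_n)$ by Definition \ref{def: surrogates}), I obtain
\[
\bar{g}_{n+1}(\param_n) = \bar{g}_n(\param_n) + \big[f^{v_{n+1}}(\param_n) - g_n^{v_{n+1}}(\param_n)\big]\pi(v_{n+1}) = \bar{g}_n(\param_n) - h_n^{v_{n+1}}(\param_n)\pi(v_{n+1}),
\]
using $h_n^{v_{n+1}} = g_n^{v_{n+1}} - f^{v_{n+1}}$. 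This rearranges to $h_n^{v_{n+1}}(\param_n)\pi(v_{n+1}) = \bar{g}_n(\param_n) - \bar{g}_{n+1}(\param_n)$.

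Next, I would invoke Lemma \ref{lem: surrogate monotonicity} in the form $\bar{g}_{n+1}(\param_{n+1}) \leq \bar{g}_{n+1}(\param_n)$ (the minimization step can only decrease the surrogate objective), which converts the right-hand side into a telescoping decrement: $h_n^{v_{n+1}}(\param_n)\pi(v_{n+1}) \leq \bar{g}_n(\param_n) - \bar{g}_{n+1}(\param_{n+1})$. Summing from $n=1$ to $N$ collapses the right-hand side to $\bar{g}_1(\param_1) - \bar{g}_{N+1}(\param_{N+1})$. I would then bound $\bar{g}_1(\param_1) \leq \bar{g}_0(\param_0)$ (again by monotonicity) and, using that each $g_n^v$ majorizes $f^v$ so that $\bar{g}_{N+1} \geq f$ on $\Param$, bound $\bar{g}_{N+1}(\param_{N+1}) \geq \inf_{\param \in \Param} f(\param)$. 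Together these give $\sum_{n=1}^N h_n^{v_{n+1}}(\param_n)\pi(v_{n+1}) \leq \bar{g}_0(\param_0) - \inf_{\param\in\Param} f(\param) = \Delta_0$ for every $N$.

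Finally, since each summand $h_n^{v_{n+1}}(\param_n) \geq 0$ (surrogates majorize and $\param_n \in \Param$) and $\pi(v_{n+1}) \geq \pi_{\text{min}}$, I would factor out $\pi_{\text{min}}$ to conclude $\pi_{\text{min}} \sum_{n=1}^N h_n^{v_{n+1}}(\param_n) \leq \Delta_0$, and let $N \to \infty$. There is no genuine analytic obstacle; the one point I would be most careful about is setting up the one-step identity correctly, namely recognizing that it is the refresh at step $n+1$ (not step $n$) that makes the $v_{n+1}$ surrogate tight at $\param_n$ and thereby produces the telescoping structure, together with the two uses of monotonicity (at the minimization step and at the surrogate-update step) that make the bound collapse to $\Delta_0$.
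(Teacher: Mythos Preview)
Your proposal is correct and follows essentially the same telescoping argument as the paper: both identify $h_n^{v_{n+1}}(\param_n)\pi(v_{n+1}) = \bar{g}_n(\param_n) - \bar{g}_{n+1}(\param_n)$, use the minimization step to replace $\bar{g}_{n+1}(\param_n)$ by $\bar{g}_{n+1}(\param_{n+1})$, telescope, and bound by $\Delta_0$. The only cosmetic difference is that the paper divides by $\pi(v_{n+1})$ before summing and then bounds $1/\pi(v_{n+1}) \leq 1/\pi_{\text{min}}$, whereas you sum first and then factor out $\pi_{\text{min}}$ from below; both are equivalent since the summands are nonnegative.
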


\begin{proof}
    By Definition \ref{def: surrogates}, for each $n$ the quantity $h_n^{v_{n+1}}(\param_n)$ is non-negative. Therefore, it suffices to show that the sequence of partial sums, $\sum_{n= 1}^N h_n^{v_{n+1}}(\param_n)$ is uniformly bounded. 
    
    Recall that
    \begin{align}
        \bar{g}_{n+1}(\param_{n+1}) \leq \bar{g}_{n+1}(\param_n)&= \bar{g}_n(\param_n) + (g_{n+1}^{v_{n+1}}(\param_n) - g_n^{v_{n+1}}(\param_n))\pi(v_{n+1})\\
        &= \bar{g}_n(\param_n) + (f^{v_{n+1}}(\param_n) - g_n^{v_{n+1}}(\param_n))\pi(v_{n+1}).
    \end{align}
    We then have
    \begin{align*}
        \sum_{n = 1}^N h_n^{v_{n+1}}(\param_{n}) &= \sum_{n = 1}^N g_{n}^{v_{n+1}}(\param_{n}) - f^{v_{n+1}}(\param_{n})\\
        &\leq \sum_{n = 1}^N \frac{1}{\pi(v_{n+1})}\left(\bar{g}_{n}(\param_{n}) - \bar{g}_{n+1}(\param_{n+1})\right)\\
        & \leq \frac{1}{\pi_{\text{min}}} \sum_{n = 1}^{N} \bar{g}_n(\param_n) - \bar{g}_{n+1}(\param_{n+1})\\
        & \leq \frac{1}{\pi_{\text{min}}} \Delta_0
    \end{align*}
    which is what we needed to show. 
\end{proof}

\begin{prop}\label{prop:iterate gap} Suppose $(\param_n)_{n \geq 0}$ is an output of Algorithm \ref{RMISO_DR}. The for all $n \geq 1$, $\norm{\param_n - \param_{n-1}} \leq r_n$.
\end{prop}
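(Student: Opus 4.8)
The plan is to observe that this statement is essentially immediate from the update rule defining Algorithm \ref{RMISO_DR}. In that algorithm the new iterate is produced by
\begin{align*}
    \param_n \in \argmin_{\param \in \Param \cap B_{r_n}(\param_{n-1})} \bar{g}_n(\param),
\end{align*}
so by construction $\param_n$ is feasible for this minimization, i.e. $\param_n \in \Param \cap B_{r_n}(\param_{n-1})$. In particular $\param_n \in B_{r_n}(\param_{n-1})$.

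First I would recall from the Notation subsection that $B_{r_n}(\param_{n-1})$ denotes the \emph{closed} Euclidean ball of radius $r_n$ centered at $\param_{n-1}$, so that membership $\param_n \in B_{r_n}(\param_{n-1})$ is by definition equivalent to the inequality $\norm{\param_n - \param_{n-1}} \leq r_n$. This yields the claim directly for every $n \geq 1$.

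The only point deserving a sentence of care is the well-definedness of the $\argmin$: one should note that the feasible region $\Param \cap B_{r_n}(\param_{n-1})$ is nonempty (it contains $\param_{n-1}$ itself, since $\param_{n-1} \in \Param$ and $\param_{n-1} \in B_{r_n}(\param_{n-1})$) and that a minimizer exists so that the algorithm is in fact producing a genuine point of this set; the proposition is a statement about \emph{any} such output. There is no real analytic obstacle here — the diminishing radius restriction is a `hard' constraint, so the one-step bound $\norm{\param_n - \param_{n-1}} \leq r_n$ holds deterministically and pathwise, in contrast to the proximal-regularization case where only the weaker summability $\sum_n \norm{\param_n - \param_{n-1}}^2 < \infty$ can be extracted. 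This deterministic bound is precisely what makes the subsequent convergence analysis for Algorithm \ref{RMISO_DR} tractable under arbitrary recurrent sampling.
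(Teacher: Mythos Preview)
Your proposal is correct and matches the paper's own proof, which simply states that the bound follows directly from the definition of $\param_n$ in Algorithm \ref{RMISO_DR}. Your additional remarks on nonemptiness of the feasible set and the contrast with proximal regularization are accurate but go beyond what the paper records for this essentially tautological observation.
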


\begin{proof}
This follows directly from the definition of $\param_n$ in Algorithm \ref{RMISO_DR}.
\end{proof}

Lemma \ref{lem: iterate gap summability} establishes the iterate stability.  These results are crucially used to control the surrogate error gradient $\norm{\nabla \bar{h}_n(\param_n)}$ in Lemma \ref{lem: conv_proof_surrogate_error_grad_sum} as well as in the asymptotic analysis of RMISO in Section \ref{sec: asymptotic analysis}.

\begin{lemma}[Finite variation of iterate differences]\label{lem: iterate gap summability}
The following hold almost surely:
    \begin{enumerate}[label=(\roman{*})]
    \item \label{item: dr summability} For Case \ref{case: diminishing_radius-markovian},
    \begin{align}
        \sum_{n = 1}^{\infty} \norm{\param_n - \param_{n-1}}^2 \leq \sum_{n = 1}^{\infty} r_n^2 < \infty.
    \end{align}
    \item \label{item: prox_reg summability} In either of the Cases \ref{case: prox_regularization-iid} or \ref{case: prox_regularization-markovian}, 
    \begin{align}
        \sum_{n =1}^{\infty} \frac{\rho_n + \mu}{2} \norm{\param_n - \param_{n-1}}^2 \leq \Delta_0 < \infty.
    \end{align}
    \end{enumerate}  
\end{lemma}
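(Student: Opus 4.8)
Part (i) is immediate and uses no probabilistic structure. By Proposition~\ref{prop:iterate gap} every step of Algorithm~\ref{RMISO_DR} obeys $\norm{\param_n - \param_{n-1}} \le r_n$; squaring and summing gives $\sum_{n\ge 1}\norm{\param_n-\param_{n-1}}^2 \le \sum_{n\ge 1} r_n^2$, which is finite by Assumption~\ref{assumption: sequences}. Nothing further is needed.

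For part (ii) the plan is to exploit strong convexity of the regularized objective together with the surrogate monotonicity already established in Lemma~\ref{lem: surrogate monotonicity}, and then telescope. In Cases~\ref{case: prox_regularization-iid} and~\ref{case: prox_regularization-markovian} the surrogates lie in $\mathcal{S}_{L,\mu}(f^{v},\cdot)$, so each $g_n^v$ is $\mu$-strongly convex and hence so is the average $\bar{g}_n = \sum_{v} g_n^v \pi(v)$. Consequently the function minimized at step $n$, namely $F_n(\param) := \bar{g}_n(\param) + \frac{\rho_n}{2}\norm{\param - \param_{n-1}}^2$, is $(\mu+\rho_n)$-strongly convex. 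Since $\param_n$ minimizes $F_n$ over the convex set $\Param$, the first-order optimality condition $\nabla F_n(\param_n, \param - \param_n) \ge 0$ combined with strong convexity yields, for the particular choice $\param = \param_{n-1} \in \Param$,
\begin{align}
F_n(\param_{n-1}) - F_n(\param_n) \ge \frac{\mu + \rho_n}{2}\norm{\param_n - \param_{n-1}}^2.
\end{align}

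Next I would substitute $F_n(\param_{n-1}) = \bar{g}_n(\param_{n-1})$ and $F_n(\param_n) = \bar{g}_n(\param_n) + \frac{\rho_n}{2}\norm{\param_n - \param_{n-1}}^2$ into this display; the proximal term only helps, giving
\begin{align}
\bar{g}_n(\param_{n-1}) - \bar{g}_n(\param_n) \ge \frac{\mu + 2\rho_n}{2}\norm{\param_n - \param_{n-1}}^2 \ge \frac{\mu + \rho_n}{2}\norm{\param_n - \param_{n-1}}^2.
\end{align}
Invoking the first inequality of Lemma~\ref{lem: surrogate monotonicity}, $\bar{g}_n(\param_{n-1}) \le \bar{g}_{n-1}(\param_{n-1})$, I replace $\bar{g}_n(\param_{n-1})$ to obtain the telescoping bound $\frac{\mu+\rho_n}{2}\norm{\param_n - \param_{n-1}}^2 \le \bar{g}_{n-1}(\param_{n-1}) - \bar{g}_n(\param_n)$. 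Summing from $n=1$ to $N$ collapses the right-hand side to $\bar{g}_0(\param_0) - \bar{g}_N(\param_N)$. Finally, since each $g_N^v$ majorizes $f^v$ by Definition~\ref{def: surrogates}, the average $\bar{g}_N$ majorizes $f$, so $\bar{g}_N(\param_N) \ge f(\param_N) \ge \inf_{\param\in\Param} f(\param)$; hence $\bar{g}_0(\param_0) - \bar{g}_N(\param_N) \le \Delta_0$. Letting $N \to \infty$ yields the claim.

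The only delicate point is the constrained strong-convexity inequality: because $\Param$ need not be all of $\R^p$ and the $g_n^v$ are only assumed convex (with differentiable majorization gap $h$), I must phrase optimality through the directional derivative $\nabla F_n(\param_n,\cdot)$ rather than through a gradient, and use the directional-derivative form of $\mu$-strong convexity. Once that is set up correctly, the rest is an exact telescoping identity, so I expect no further obstacles. I would also emphasize that the bound holds pathwise (almost surely), since none of the steps invokes any property of the sampling scheme.
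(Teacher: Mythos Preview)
Your proposal is correct and follows essentially the same approach as the paper: define the regularized objective $F_n$ (the paper calls it $G_n$), use its $(\rho_n+\mu)$-strong convexity at the minimizer $\param_n$ evaluated at $\param_{n-1}$, invoke Lemma~\ref{lem: surrogate monotonicity} to pass from $\bar g_n(\param_{n-1})$ to $\bar g_{n-1}(\param_{n-1})$, telescope, and bound by $\Delta_0$ using the majorization property. Your intermediate inequality with coefficient $\frac{\mu+2\rho_n}{2}$ is in fact slightly sharper than what the paper records (the paper simply uses $G_n(\param_n)\ge \bar g_n(\param_n)$ rather than the exact expression), but you discard the extra $\rho_n$ anyway, so the two arguments coincide.
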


\begin{proof}
    The proof of \ref{item: dr summability} can be deduced from Proposition \ref{prop:iterate gap} and Assumption \ref{assumption: sequences}. 
    
    Now assume either of the Cases \ref{case: prox_regularization-iid} or \ref{case: prox_regularization-markovian}. Define $G_n(\param) = \bar{g}_n(\param) + \frac{\rho_n}{2}\norm{\param - \param_{n-1}}$. Then $G_n$ is $\rho_n + \mu$ strongly convex. Since $\theta_n$ is a minimizer of $G_n$ over $\Param$ we get
    \begin{align}
         G_n(\param_n) + \frac{\rho_n + \mu}{2}\norm{\param_n - \param_{n-1}}^2 \leq G_n(\param_{n-1}) = \bar{g}_n(\param_{n-1}) \leq \bar{g}_{n-1}(\param_{n-1})
    \end{align}
    where the last inequality is due to Lemma \ref{lem: surrogate monotonicity}.
    So 
    \begin{align}
        \frac{\rho_n + \mu}{2} \norm{\param_n - \param_{n-1}}^2 \leq \bar{g}_{n-1}(\param_{n-1}) - G_n(\param_n) \leq \bar{g}_{n-1}(\param_{n-1}) - \bar{g}_{n}(\param_n).
    \end{align}
    Hence,
    \begin{align}
        \sum_{n = 1}^{N} \frac{\rho_n + \mu}{2}\norm{\param_n - \param_{n-1}}^2 \leq \sum_{n = 1}^N \bar{g}_{n-1}(\param_{n-1}) - \bar{g}_n(\param_n) &= \bar{g}_0(\param_0) - \bar{g}_N(\param_N) \leq \Delta_0.
    \end{align}
    Letting $N \to \infty$ shows that 
    \begin{align}
        \sum_{n = 1}^{\infty} \frac{\rho_n + \mu}{2} \norm{\param_n - \param_{n-1}}^2 \leq \Delta_0
    \end{align}
    as desired. This shows \ref{item: prox_reg summability}. 
\end{proof}

The remaining results in this section concern Algorithm \ref{RMISO_DR} and are used both in the convergence rate analysis in Section \ref{sec: convergence rate analysis} as well as the asymptotic analsyis of Section \ref{sec: asymptotic analysis}. Recall that in this case we are assuming that $\nabla f^v$ is $L$-Lipschitz continuous for each $v \in V$. Proposition \ref{prop: lipshitz surrogate} states that this assumption implies $\nabla \bar{g}_n$ is differentiable and $2L$ Lipschitz for each $n$. 

\begin{prop}\label{prop: lipshitz surrogate}
    Let $\{\param_{v}\}_{v \in \mathcal{V}}$ be a collection of $|\mathcal{V}|$ points in $\Param$. Suppose that Assumption \ref{assumption: Lsmooth} holds and that $g_n^v \in \mathcal{S}_{L}(f^v, \param_v)$ for each $v$. Then
    \begin{enumerate}[label=(\roman{*})]
     \item The gradient of the objective function $\nabla f = \sum_{v \in \mathcal{V}} \nabla f^v\pi(v)$ is $L$-Lipschitz over $\Param$. 

     \vspace{0.1cm}
     \item For each $v$, $\nabla g_n^v$ is $2L$-Lipchitz over $\Param$. In addition, $\nabla \bar{g}_n$ is $2L$-Lipchitz. 
    \end{enumerate}
\end{prop}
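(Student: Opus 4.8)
The plan is to prove both items directly from the definitions, relying only on the triangle inequality and on the fact that $\pi$ is a probability distribution on $\mathcal{V}$ (so that $\sum_{v \in \mathcal{V}} \pi(v) = 1$, as normalized in the introduction). None of the recurrence or sampling hypotheses enter here; the proposition is purely a statement that $L$-Lipschitz continuity of gradients is preserved both under convex combinations and under the additive decomposition $g = f + h$.

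For item (i), I would start from the identity $\nabla f(\param) - \nabla f(\param') = \sum_{v \in \mathcal{V}} \big(\nabla f^v(\param) - \nabla f^v(\param')\big)\pi(v)$, apply the triangle inequality, and then use Assumption \ref{assumption: Lsmooth} to bound each term $\norm{\nabla f^v(\param) - \nabla f^v(\param')}$ by $L\norm{\param - \param'}$. Because the weights $\pi(v)$ are nonnegative and sum to one, the bound collapses to $\norm{\nabla f(\param) - \nabla f(\param')} \le L\norm{\param - \param'}$, giving the claimed $L$-Lipschitz constant without inflation by $|\mathcal{V}|$.

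For item (ii), the key observation is the decomposition $g_n^v = f^v + h_n^v$, where $h_n^v = g_n^v - f^v$ is the surrogate approximation error. Since $g_n^v \in \mathcal{S}_L(f^v, \param_v)$, Definition \ref{def: surrogates} guarantees that $\nabla h_n^v$ is $L$-Lipschitz, while Assumption \ref{assumption: Lsmooth} gives that $\nabla f^v$ is $L$-Lipschitz. Writing $\nabla g_n^v = \nabla f^v + \nabla h_n^v$ and applying the triangle inequality then yields a Lipschitz constant of $2L$ for each $\nabla g_n^v$. Finally, for $\bar{g}_n = \sum_{v \in \mathcal{V}} g_n^v \pi(v)$, I would repeat the convex-combination argument of item (i) verbatim, now with per-vertex constant $2L$, to conclude that $\nabla \bar{g}_n$ is $2L$-Lipschitz. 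There is no genuine obstacle in this proof; the only point requiring care is to invoke $\sum_{v \in \mathcal{V}} \pi(v) = 1$ so that averaging preserves, rather than multiplies, the per-summand Lipschitz constant.
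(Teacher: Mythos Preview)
Your proposal is correct and follows essentially the same approach as the paper: item (i) is obtained from the triangle inequality and the fact that $\pi$ is a probability distribution, while item (ii) is obtained from the decomposition $\nabla g_n^v = \nabla f^v + \nabla h_n^v$ together with Definition~\ref{def: surrogates} and Assumption~\ref{assumption: Lsmooth}, followed by the same convex-combination argument for $\bar g_n$.
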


\begin{proof}
    Since $\pi$ is a probability distribution, (i) follows easily from the triangle inequality. 

    For (ii) note that $\nabla (g_n^v - f^v) = \nabla h_n^v$ is $L$-Lipshitz by Definition \ref{def: surrogates}. Then since $\nabla g_n^v = \nabla h_n^v + \nabla f^v$ it follows from the triangle inequality that $\nabla g_n^v$ is $2L$-Lipschitz. Then recalling that $\nabla \bar{g}_n = \sum_{v \in \mathcal{V}} \nabla g_n^v \pi(v)$ another application of the triangle inequality shows that $\nabla \bar{g}_n$ is $2L$-Lipschitz continuous. 
\end{proof}

\begin{prop}\label{prop: surogate_gradient_step_sum}
    Assume Case \ref{case: diminishing_radius-markovian} and let $(\param_n)_{n \geq 0}$ be an output of Algorithm \ref{RMISO_DR}. Then 
    \begin{align}
        \sum_{n = 1}^{N} |\langle \nabla \bar{g}_n(\param_{n-1}), \param_n - \param_{n-1} \rangle| \leq \Delta_0 + L \sum_{n = 1}^N r_n^2
    \end{align}
    almost surley.
\end{prop}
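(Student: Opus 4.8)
The plan is to bound each summand $|\langle \nabla \bar{g}_n(\param_{n-1}), \param_n - \param_{n-1} \rangle|$ by a one-step decrease of the surrogate objective plus a quadratic error term, and then sum. Throughout I will use that in Case \ref{case: diminishing_radius-markovian} Assumption \ref{assumption: Lsmooth} holds, so by Proposition \ref{prop: lipshitz surrogate} the averaged surrogate $\bar{g}_n$ has $2L$-Lipschitz gradient; moreover $\bar{g}_n$ is convex, being an average of convex first-order surrogates.

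First I would pin down the sign of the inner product, which is the crux of the argument. Since $\param_n$ minimizes $\bar{g}_n$ over $\Param \cap B_{r_n}(\param_{n-1})$ and $\param_{n-1}$ belongs to this set, we have $\bar{g}_n(\param_n) \leq \bar{g}_n(\param_{n-1})$. Convexity of $\bar{g}_n$ then gives
\begin{align*}
\langle \nabla \bar{g}_n(\param_{n-1}), \param_n - \param_{n-1} \rangle \leq \bar{g}_n(\param_n) - \bar{g}_n(\param_{n-1}) \leq 0,
\end{align*}
so the absolute value equals $-\langle \nabla \bar{g}_n(\param_{n-1}), \param_n - \param_{n-1} \rangle$. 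Establishing this definite sign is the main obstacle: it is what lets me dispense with the absolute value, and it relies on both convexity and the feasibility of $\param_{n-1}$ for the trust-region subproblem.

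Next I would produce a matching lower bound from smoothness. The descent lemma applied to the $2L$-smooth $\bar{g}_n$ at $\param_{n-1}$ yields $\bar{g}_n(\param_n) \leq \bar{g}_n(\param_{n-1}) + \langle \nabla \bar{g}_n(\param_{n-1}), \param_n - \param_{n-1}\rangle + L\norm{\param_n - \param_{n-1}}^2$. Rearranging and combining with the previous display gives the per-step bound $|\langle \nabla \bar{g}_n(\param_{n-1}), \param_n - \param_{n-1}\rangle| \leq \bar{g}_n(\param_{n-1}) - \bar{g}_n(\param_n) + L\norm{\param_n - \param_{n-1}}^2$.

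Finally I would sum from $n=1$ to $N$ and telescope. By Lemma \ref{lem: surrogate monotonicity}, $\bar{g}_n(\param_{n-1}) \leq \bar{g}_{n-1}(\param_{n-1})$, so $\sum_{n=1}^N [\bar{g}_n(\param_{n-1}) - \bar{g}_n(\param_n)] \leq \sum_{n=1}^N [\bar{g}_{n-1}(\param_{n-1}) - \bar{g}_n(\param_n)] = \bar{g}_0(\param_0) - \bar{g}_N(\param_N)$, which is at most $\Delta_0$ since $\bar{g}_N \geq f \geq \inf_{\param} f$ by majorization and lower-boundedness. For the quadratic term, Proposition \ref{prop:iterate gap} gives $\norm{\param_n - \param_{n-1}}^2 \leq r_n^2$, so $L\sum_{n=1}^N \norm{\param_n - \param_{n-1}}^2 \leq L \sum_{n=1}^N r_n^2$. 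Adding the two pieces yields the claimed bound $\Delta_0 + L\sum_{n=1}^N r_n^2$.
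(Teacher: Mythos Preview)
Your proof is correct and follows essentially the same route as the paper: a per-step bound of the form $|\langle \nabla \bar g_n(\param_{n-1}),\param_n-\param_{n-1}\rangle|\le \bar g_n(\param_{n-1})-\bar g_n(\param_n)+L\|\param_n-\param_{n-1}\|^2$, then Lemma \ref{lem: surrogate monotonicity} and telescoping. The only minor difference is how you dispose of the absolute value: you invoke convexity of $\bar g_n$ to show the inner product is nonpositive and then use the one-sided descent inequality, whereas the paper applies the two-sided bound from Lemma \ref{lem: L-smooth_surrogate} together with the triangle inequality, so convexity is not needed at that step (only $\bar g_n(\param_n)\le\bar g_n(\param_{n-1})$ is used to drop the absolute value on the function difference). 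Both arguments are valid and yield the identical constants.
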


\begin{proof}
    Since $\nabla \bar{g}_n$ is $2L$-Lipschitz continuous by Proposition \ref{prop: lipshitz surrogate}, by Lemma \ref{lem: L-smooth_surrogate}
    \begin{align} 
        |\bar{g}_n(\param_n) - \bar{g}_n(\param_{n-1}) - \langle \nabla \bar{g}_n(\param_{n-1}), \param_n - \param_{n-1} \rangle | \leq L\norm{\param_n - \param_{n-1}}^2.
        \label{eq: prop_gradient_step_sum_1}
    \end{align}
    Under Algorithm \ref{RMISO_DR} we have $\bar{g}_n(\param_n) \leq \bar{g}_n(\param_{n-1})$ by the definition of $\param_n$ and $\bar{g}_n(\param_{n-1}) \leq \bar{g}_{n-1}(\param_{n-1})$ by Lemma \ref{lem: surrogate monotonicity}. These observations together with \eqref{eq: prop_gradient_step_sum_1} imply
    \begin{align}
        \left| \langle \nabla \bar{g}_n(\param_{n-1}), \param_n - \param_{n-1} \rangle| \right| &\leq |\bar{g}_{n}(\param_{n-1}) - \bar{g}_n(\param_n)| + L\norm{\param_n - \param_{n-1}}^2\\
        &= \bar{g}_n(\param_{n-1}) - \bar{g}_n(\param_n) + L\norm{\param_n - \param_{n-1}}^2\\
        & \leq \bar{g}_{n-1}(\param_{n-1}) - \bar{g}_{n}(\param_n) + L\norm{\param_{n} - \param_{n-1}}^2.
    \end{align}
    We have
    \begin{align}
        \sum_{n = 1}^{N} \bar{g}_{n-1}(\param_{n-1}) - \bar{g}_n(\param_n)  \leq \Delta_0
    \end{align}
    almost surely. Therefore
    \begin{align}
        \sum_{n = 1}^{N} \left|\langle \nabla \bar{g}_n(\param_{n-1}), \param_n - \param_{n-1} \rangle \right| & \leq \sum_{n = 1}^{N} \bar{g}_{n-1}(\param_{n-1}) - \bar{g}_n(\param_n) + L \norm{\param_n - \param_{n-1}}^2 \\
        & \leq \Delta_0 + L \sum_{n = 1}^{
        N} \norm{\param_n - \param_{n-1}}^2\\
        & \leq \Delta_0 + L \sum_{n = 1}^{N} r_n^2,
    \end{align}
    where the last line uses $\norm{\param_n - \param_{n-1}} \leq r_n$. 
\end{proof}

The next lemma is a key to establishing iteration complexity of Algorithm \ref{RMISO_DR}. A similar lemma was used to analyze block majorization-minimization with diminishing radius in \cite{lyu2021BCD}.

\begin{lemma}[Approximate first order optimality]\label{lem: surrogate_grad_inf_lowerbound}
\indent
Let $(\param_n)_{n \geq 0}$ be an output of Algorithm \ref{RMISO_DR} and let $b_n = \min\{1, r_n\}$. Then 
    \begin{align}
    b_n \sup_{\param \in \Param, \norm{\param - \param_{n-1}} \leq 1} \langle - \nabla \bar{g}_{n-1}(\param_{n-1}), \param - \param_{n-1} \rangle \leq \langle -\nabla \bar{g}_n(\param_{n-1}), \param_n - \param_{n-1} \rangle + r_n \norm{\nabla h_{n-1}^{v_n}(\param_{n-1})} + 2Lr_n^2. 
    \end{align}
\end{lemma}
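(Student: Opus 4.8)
The plan is to reduce the supremum on the left to a single comparison point, transport that point into the radius-$r_n$ trust region where $\param_n$ is optimal, and then trade the difference between $\nabla \bar{g}_{n-1}$ and $\nabla \bar{g}_n$ for the surrogate-error term. First I would let $\param^*$ attain the supremum $\sup_{\param \in \Param, \norm{\param - \param_{n-1}} \leq 1}\langle -\nabla \bar{g}_{n-1}(\param_{n-1}), \param - \param_{n-1}\rangle$; this is legitimate since $\Param \cap B_1(\param_{n-1})$ is compact and the objective is linear, hence continuous. Setting $\param' := \param_{n-1} + b_n(\param^* - \param_{n-1})$ with $b_n = \min\{1, r_n\}$, convexity of $\Param$ gives $\param' \in \Param$, while $\norm{\param' - \param_{n-1}} = b_n\norm{\param^* - \param_{n-1}} \leq b_n \leq r_n$ shows $\param' \in \Param \cap B_{r_n}(\param_{n-1})$. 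By linearity, $b_n \langle -\nabla \bar{g}_{n-1}(\param_{n-1}), \param^* - \param_{n-1}\rangle = \langle -\nabla \bar{g}_{n-1}(\param_{n-1}), \param' - \param_{n-1}\rangle$, so it remains to bound the latter.

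The second step is to replace $\nabla \bar{g}_{n-1}(\param_{n-1})$ by $\nabla \bar{g}_n(\param_{n-1})$. Since only the surrogate at $v_n$ changes between steps $n-1$ and $n$, one has $\bar{g}_n - \bar{g}_{n-1} = (g_n^{v_n} - g_{n-1}^{v_n})\pi(v_n)$; differentiating at $\param_{n-1}$ and using $\nabla h_n^{v_n}(\param_{n-1}) = 0$ (Definition \ref{def: surrogates}, as $g_n^{v_n}$ is tight at $\param_{n-1}$) while $\nabla g_{n-1}^{v_n}(\param_{n-1}) = \nabla f^{v_n}(\param_{n-1}) + \nabla h_{n-1}^{v_n}(\param_{n-1})$ yields $\nabla \bar{g}_{n-1}(\param_{n-1}) = \nabla \bar{g}_n(\param_{n-1}) + \pi(v_n)\nabla h_{n-1}^{v_n}(\param_{n-1})$. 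Substituting this and bounding the correction term by Cauchy--Schwarz, using $\pi(v_n) \leq 1$ and $\norm{\param' - \param_{n-1}} \leq r_n$, produces the term $r_n \norm{\nabla h_{n-1}^{v_n}(\param_{n-1})}$ and leaves $\langle -\nabla \bar{g}_n(\param_{n-1}), \param' - \param_{n-1}\rangle$ to be controlled.

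The final and main step is to convert the optimality of $\param_n$ over $\Param \cap B_{r_n}(\param_{n-1})$, namely $\bar{g}_n(\param_n) \leq \bar{g}_n(\param')$, into the desired inner-product inequality. I would invoke the $2L$-smoothness of $\nabla \bar{g}_n$ (Proposition \ref{prop: lipshitz surrogate}, via Lemma \ref{lem: L-smooth_surrogate}) twice: once to upper bound $\bar{g}_n(\param') \leq \bar{g}_n(\param_{n-1}) + \langle \nabla \bar{g}_n(\param_{n-1}), \param' - \param_{n-1}\rangle + L\norm{\param' - \param_{n-1}}^2$, and once to lower bound $\bar{g}_n(\param_n) \geq \bar{g}_n(\param_{n-1}) + \langle \nabla \bar{g}_n(\param_{n-1}), \param_n - \param_{n-1}\rangle - L\norm{\param_n - \param_{n-1}}^2$. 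Chaining these through $\bar{g}_n(\param_n) \leq \bar{g}_n(\param')$ and cancelling $\bar{g}_n(\param_{n-1})$ gives $\langle -\nabla \bar{g}_n(\param_{n-1}), \param' - \param_{n-1}\rangle \leq \langle -\nabla \bar{g}_n(\param_{n-1}), \param_n - \param_{n-1}\rangle + L\norm{\param' - \param_{n-1}}^2 + L\norm{\param_n - \param_{n-1}}^2$, and the two squared norms are each at most $r_n^2$ (by $\norm{\param' - \param_{n-1}} \leq r_n$ and Proposition \ref{prop:iterate gap}), yielding the $2Lr_n^2$ slack. Assembling the three steps gives the claimed inequality. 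I expect this last step to be the main obstacle, since it is where the comparison between the two feasible trust-region points and the quadratic smoothness error must be balanced to produce exactly the $2Lr_n^2$ term; the earlier reductions are essentially bookkeeping, and the only subtlety there is ensuring attainment of the supremum and feasibility of $\param'$.
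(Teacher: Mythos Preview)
Your proposal is correct and follows essentially the same approach as the paper: both arguments combine the $2L$-smoothness of $\bar g_n$ (Proposition~\ref{prop: lipshitz surrogate}) with the optimality of $\param_n$ over the trust region to compare $\langle -\nabla \bar g_n(\param_{n-1}),\,\cdot\,\rangle$ at $\param_n$ and at a scaled feasible point, then use the identity $\nabla \bar g_{n-1}(\param_{n-1}) = \nabla \bar g_n(\param_{n-1}) + \pi(v_n)\nabla h_{n-1}^{v_n}(\param_{n-1})$ and Cauchy--Schwarz to absorb the surrogate change, and finally invoke convexity of $\Param$ to pass between the unit ball and the $b_n$-ball. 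The only cosmetic differences are the order of these steps and that you fix a maximizer $\param^*$ up front while the paper works with an arbitrary $\param$ and takes the supremum at the end; the latter sidesteps your appeal to compactness of $\Param\cap B_1(\param_{n-1})$, which would need $\Param$ closed (not explicitly assumed), though this is easily patched by a limiting argument.
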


\begin{proof}
     Fix $\param \in \Param$ with $\norm{\param - \param_{n-1}} \leq b_n$. By  definition of $\param_n$ we have $\bar{g}_n(\param_n) \leq \bar{g}_n(\param)$. Subtracting $\bar{g}_{n}(\param_{n-1})$ from both sides and using Proposition \ref{prop: lipshitz surrogate} and Lemma \ref{lem: L-smooth_surrogate},
    \begin{align}
        \langle \nabla \bar{g}_n(\param_{n-1}), \param_n - \param_{n-1} \rangle - L\norm{\param_n - \param_{n-1}}^2 &\leq \bar{g}_n(\param_n) - \bar{g}_n(\param_{n-1})\\
        & \leq \bar{g}_n(\param) - \bar{g}_n(\param_{n-1})\\
        & \leq \langle \nabla \bar{g}_n(\param_{n-1}), \param - \param_{n-1} \rangle + L\norm{\param - \param_{n-1}}^2. \label{eq: lem_surrogate_grad_inf_lowerbound_1}
    \end{align}
    Notice that 
    \begin{align}
        \nabla\bar{g}_n(\param_{n-1}) &= \nabla\bar{g}_{n-1}(\param_{n-1}) + \left[\nabla g_n^{v_n}(\param_{n-1}) - \nabla g_{n-1}^{v_n}(\param_{n-1})\right]\pi(v_n)\\
        &= \nabla \bar{g}_{n-1}(\param_{n-1}) + \left[\nabla f^{v_n}(\param_{n-1}) - \nabla g_{n-1}^{v_n}(\param_{n-1}\right]\pi(v_n)\\
        &= \nabla \bar{g}_{n-1}(\param_{n-1}) - \nabla h_{n-1}^{v_n}(\param_{n-1}) \pi(v_n). 
    \end{align}
    where the second line used $g_n^{v_n} \in \mathcal{S}_{L}(f^{v_n}, \param_{n-1})$ and item (ii) of Definition \ref{def: surrogates}, and the third line used the definition of $h_{n-1}^{v_n}$. Therefore, adding and subtracting $\langle \nabla \bar{g}_{n-1}(\param_{n-1}), \param - \param_{n-1} \rangle$ from the right hand side of \eqref{eq: lem_surrogate_grad_inf_lowerbound_1} we get
    \begin{align}
        \langle \nabla \bar{g}_n(\param_{n-1}), \param_n - \param_{n-1} \rangle &\leq \langle \nabla \bar{g}_{n-1}(\param_{n-1}), \param - \param_{n-1} \rangle  -  \pi(v_n)\langle \nabla h_{n-1}^{v_n}(\param_{n-1}), \param - \param_{n-1} \rangle + L\norm{\param - \param_{n-1}}^2 \\
        & \qquad \qquad \qquad \qquad + L \norm{\param_n - \param_{n-1}}^2\\
        & \leq  \langle \nabla \bar{g}_{n-1}(\param_{n-1}), \param - \param_{n-1} \rangle + \norm{\nabla h_{n-1}^{v_n}(\param_{n-1})}\norm{\param - \param_{n-1}} + L\norm{\param - \param_{n-1}}^2\\
        & \qquad \qquad \qquad \qquad + L\norm{\param_n - \param_{n-1}}^2\\
        & \leq \langle \nabla \bar{g}_{n-1}(\param_{n-1}), \param - \param_{n-1} \rangle + r_n \norm{\nabla h_{n-1}^{v_n}(\param_{n-1})} + 2Lr_n^2
    \end{align}
    where the last line used $\norm{\param_n - \param_{n-1}} \leq r_n$ and $\norm{\param - \param_{n-1}} \leq b_n \leq r_n$. 
    Since the above holds for all $\param \in \Param$ with $\norm{\param - \param_{n-1}} \leq b_n$ we obtain
    \begin{align}
        \langle \nabla \bar{g}_n(\param_{n-1}), \param_n - \param_{n-1} \rangle \leq \inf_{\param \in \Param, \norm{\param - \param_{n-1}} \leq b_n} \langle \nabla \bar{g}_{n-1}(\param_{n-1}), \param - \param_{n-1} \rangle + r_n\norm{\nabla h_{n-1}^{v_n}(\param_{n-1})} + 2Lr_n^2.
    \label{eq: surrogate_grad_inf_lowerbound-DR intermediate}
    \end{align}
    Finally notice that since $b_n \leq 1$, the convexity of $\Param$ implies that $\param_{n-1} + b_n(\param - \param_{n-1}) \in \Param$ for any $\param \in \Param$. Thus, if there exists $\param \in \Param$ with $\norm{\param - \param_{n-1}} \leq 1$ there is $\param' \in \Param$ with $\norm{\param' - \param_{n-1}} \leq b_n$ such that the direction of $\param'- \param_{n-1}$ agrees with that of $\param - \param_{n-1}$.  Therefore
    \begin{align}
        b_n\sup_{\param \in \Param, \norm{\param - \param_{n-1}}\leq 1} \langle - \nabla \bar{g}_{n-1}(\param_{n-1}), \param - \param_{n-1} \rangle &= \sup_{\param \in \Param, \norm{\param - \param_{n-1}} \leq 1} \langle - \nabla \bar{g}_{n-1}(\param_{n-1}), b_n(\param - \param_{n-1}) \rangle\\
        & \leq \sup_{\param \in \Param, \norm{\param - \param_{n-1}} \leq b_n} \langle - \nabla \bar{g}_{n-1}(\param_{n-1}), \param - \param_{n-1} \rangle. 
    \end{align}
    combining this with \eqref{eq: surrogate_grad_inf_lowerbound-DR intermediate} we complete the proof. 
\end{proof}

\section{Convergence Rate Analysis}\label{sec: convergence rate analysis}

In this subsection we prove the convergence rate guarantees of Theorem \ref{thm: convergence rates to stationarity}. 

\subsection{The key lemma}

First we state and prove Lemma \ref{lem: conv_proof_surrogate_error_grad_sum} which lies at the heart of our analysis. It allows us to relate the surrogate error gradient $\norm{\nabla \bar{h}_n(\param_n)}$ to the sequence of parameter differences $(\norm{\param_n - \param_{n-1}}^2)$ which is known to be summable by Lemma \ref{lem: iterate gap summability}. It is important to note that we only use the recurrence of data sampling Assumption \ref{assumption: process properties} and the structure of the algorithm in the proof. 

\begin{lemma}[Key lemma]\label{lem: conv_proof_surrogate_error_grad_sum}
Let $(c_n)_{n \geq 1}$ be a non-increasing sequence of positive numbers. For any of the cases \ref{case: prox_regularization-iid}-\ref{case: diminishing_radius-markovian} and any $v \in V$,
\begin{align}
\E \left[ \sum_{n = 1}^N c_n\norm{\nabla \bar{h}_n(\param_n)} \right] \leq Lt_{\odot} \left(\sum_{n = 1}^N c_n^2 \right)^{1/2} \E \left[ \left(\sum_{n=1}^N \norm{\param_n - \param_{n-1}}^2 \right)^{1/2}\right].
\end{align}
\end{lemma}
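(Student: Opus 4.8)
The plan is to control $\norm{\nabla \bar{h}_n(\param_n)}$ through the individual surrogate errors and then convert the resulting expression into the squared iterate differences on the right-hand side. First I would use that $g_n^v \in \mathcal{S}_{L}(f^v, \param_{k^v(n)-1})$, so $\nabla h_n^v(\param_{k^v(n)-1}) = 0$ and $\nabla h_n^v$ is $L$-Lipschitz. Combined with $\bar{h}_n = \sum_v h_n^v \pi(v)$ and the triangle inequality, this gives the pointwise estimate
\begin{align*}
    \norm{\nabla \bar{h}_n(\param_n)} \leq \sum_{v \in \mathcal{V}} \pi(v)\norm{\nabla h_n^v(\param_n)} \leq L\sum_{v \in \mathcal{V}} \pi(v)\norm{\param_n - \param_{k^v(n)-1}}.
\end{align*}
Multiplying by $c_n$ and using the telescoping identity $\param_n - \param_{k^v(n)-1} = \sum_{i=k^v(n)}^n (\param_i - \param_{i-1})$ together with the monotonicity of $(c_n)$ (so $c_n \leq c_i$ whenever $i \leq n$), I obtain
\begin{align*}
    c_n \norm{\param_n - \param_{k^v(n)-1}} \leq \sum_{i=k^v(n)}^n c_i\norm{\param_i - \param_{i-1}}.
\end{align*}

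The combinatorial step, which I expect to be the main obstacle, is to exchange the order of summation in $\sum_{n=1}^N \sum_{i=k^v(n)}^n c_i\norm{\param_i-\param_{i-1}}$ and bound the number of times each index $i$ is counted. Writing $a_i = c_i \norm{\param_i - \param_{i-1}} \geq 0$, each $a_i$ is counted once for every $n$ with $k^v(n) \leq i \leq n$. For fixed $i$, the condition $k^v(n) \leq i$ with $n \geq i$ holds precisely when $v$ is not visited on $(i, n]$, i.e. when $i \leq n \leq i + \tau_{i, v} - 1$, so the multiplicity is at most $\tau_{i, v}$ (this is consistent even when $v$ is never revisited, since then $\tau_{i, v} > N - i$). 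This yields
\begin{align*}
    \sum_{n=1}^N c_n \norm{\nabla \bar{h}_n(\param_n)} \leq L\sum_{i=1}^N c_i \norm{\param_i - \param_{i-1}}\sum_{v \in \mathcal{V}} \pi(v)\,\tau_{i,v}.
\end{align*}

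Finally I would take expectations and exploit that $c_i\norm{\param_i - \param_{i-1}}$ is $\cF_i$-measurable (since $\param_{i-1}, \param_i$ are), while $\tau_{i,v}$ is determined by the future samples. Conditioning on $\cF_i$ and invoking the definition of $t_{\odot}$ in \eqref{eq: target time} gives $\sum_{v} \pi(v)\,\E[\tau_{i,v}\mid \cF_i] \leq t_{\odot}$ almost surely, so that
\begin{align*}
    \E\left[\sum_{n=1}^N c_n \norm{\nabla \bar{h}_n(\param_n)}\right] \leq L t_{\odot} \sum_{i=1}^N c_i\, \E[\norm{\param_i - \param_{i-1}}] = L t_{\odot}\, \E\left[\sum_{i=1}^N c_i \norm{\param_i - \param_{i-1}}\right].
\end{align*}
A single application of the Cauchy--Schwarz inequality to the inner sum, with the deterministic factor $\big(\sum_{i} c_i^2\big)^{1/2}$ pulled out of the expectation, then produces the claimed bound. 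I note that the argument uses only the surrogate properties of Definition \ref{def: surrogates} and Assumption \ref{assumption: process properties}, and in particular never uses $\nabla \bar{g}_n(\param_n) = 0$ or any case-specific regularization, so it applies uniformly to all of Cases \ref{case: prox_regularization-iid}--\ref{case: diminishing_radius-markovian}; the final estimate is independent of the choice of $v$.
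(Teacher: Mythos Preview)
Your proposal is correct and follows essentially the same route as the paper's proof: both use the Lipschitz bound $\norm{\nabla h_n^v(\param_n)}\le L\norm{\param_n-\param_{k^v(n)-1}}$, telescope the iterate difference, swap the order of summation so that the multiplicity of each $c_i\norm{\param_i-\param_{i-1}}$ is bounded by $\tau_{i,v}$, condition on $\cF_i$ to invoke $t_\odot$, and finish with Cauchy--Schwarz. The only cosmetic difference is that you apply the monotonicity of $(c_n)$ before swapping sums (replacing $c_n$ by $c_i$), whereas the paper keeps $c_n$ through the swap and bounds $\sum_{n=i}^{N\wedge p^v(i)-1}c_n\le c_i\,\tau_{i,v}$ afterward; the two are equivalent.
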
 

\begin{proof}
    Fix some $v \in V$. We recall that $k^v(n)$ is the last time before $n$ that the sampling process visited $v$ and therefore the last time the surrogate $g_n^v$ was updated. We then have $g_n^v \in \mathcal{S}_{L}(f^v, \param_{k^v(n)-1})$ so by the definition of first-order surrogates (Definition \ref{def: surrogates}) $\nabla h_n^v(\param_{k^v(n)-1}) = 0$. Combining this with the Lipschitz continuity of $\nabla h_n^v$ we get
\begin{align}
    c_n\norm{\nabla h_n^v(\param_n)} = c_n\norm{\nabla h_n^v(\param_n) - \nabla h_n^v(\param_{k^v(n)-1})} \leq Lc_n\norm{\param_n - \param_{k^v(n)-1}} \leq L\sum_{i = k^v(n)}^n c_n\norm{\param_i - \param_{i-1}}.
\end{align}
Therefore by the triangle inequality,
\begin{align}
c_n\norm{\nabla \bar{h}_n(\param_n)} \leq c_n \sum_{v \in \mathcal{V}}\norm{\nabla h_n^v(\param_n)}\pi(v) \leq L \sum_{v \in \mathcal{V}} \left( \sum_{i = k^v(n)}^n c_n \norm{\param_i - \param_{i-1}} \right)\pi(v).
\end{align}
For an integer $n$, let $p^v(n) = \inf\{ j > n: v_j = v\}$ be the first time strictly after $n$ that the sampling algorithm visits $v$. Denote $a\land b :=\min(a,b)$. We have 
\begin{align}
\sum_{n = 1}^N \sum_{v \in \mathcal{V}} \left( \sum_{i = k^v(n)}^n c_n\norm{\param_i - \param_{i-1}} \right) \pi(v) &= \sum_{v \in \mathcal{V}} \left( \sum_{n = 1}^N \sum_{i = k^v(n)}^n c_n \norm{\param_i - \param_{i-1}} \right)\pi(v)\\
&=\sum_{v \in \mathcal{V}} \left(\sum_{i = 1}^N \sum_{n = i}^{N \wedge p^v(i) - 1} c_n \norm{\param_i - \param_{i-1}} \right)\pi(v)\\
& \leq \sum_{v \in \mathcal{V}} \left(\sum_{i = 1}^N c_i\norm{\param_i - \param_{i-1}}(p^v(i) - i) \right)\pi(v)\\
& = \sum_{v \in \mathcal{V}} \left(\sum_{i = 1}^N c_i\norm{\param_i - \param_{i-1}}\tau_{i, v} \right)\pi(v),
\end{align}
where the third line used that $(c_n)$ is non-increasing. So we get
\begin{align}
    \E \left[ \sum_{n = 1}^N c_n\norm{\nabla \bar{h}_n(\param_n)}\right] &\leq L \E\left[ \sum_{v \in \mathcal{V}} \left(\sum_{n = 1}^N c_n\norm{\param_n - \param_{n-1}}\tau_{n, v} \right) \pi(v)\right]\\
    &= L \E \left[ \sum_{n = 1}^N c_n \norm{\param_n - \param_{n-1}} \left(\sum_{v \in V} \tau_{n, v} \pi(v) \right)\right]\\
    &= L \E \left[ \sum_{n = 1}^N c_n\norm{\param_n - \param_{n-1}} \left(\sum_{v \in \mathcal{V}}\E[\tau_{n, v}|\cF_n]\pi(v) \right)\right]\\
    &\leq Lt_{\odot} \E\left[\sum_{n=1}^N c_n\norm{\param_n - \param_{n-1}}\right]\\
    & \leq Lt_{\odot} \left(\sum_{n = 1}^N c_n^2 \right)^{1/2}\E \left[ \left(\sum_{n = 1}^N \norm{\param_n - \param_{n-1}}^2 \right)^{1/2}\right]
\end{align}
with the second to last line using Assumption \ref{assumption: process properties} and the last using Cauchy-Schwartz.
\end{proof}

\subsection{The constant proximal regularization case \ref{case: prox_regularization-iid}}

In this section we prove Theorem \ref{thm: convergence rates to stationarity-extended} for Case \ref{case: prox_regularization-iid}.

\begin{proof}[\textbf{Proof of Theorem \ref{thm: convergence rates to stationarity-extended} for case \ref{case: prox_regularization-iid}}]

We first use the linearity of the limit and the differentiability of the average surrogate error $\bar{h}_n$ to get
\begin{align}
    \left| \nabla \bar{g}_n(\param_n, \param - \param_n) - \nabla f(\param_n, \param - \param_n) \right| = \left | \langle \nabla \bar{h}_n(\param_n), \param - \param_n \rangle \right| \leq \norm{\nabla \bar{h}_n(\param_n)}\norm{\param - \param_n} \leq \norm{\nabla \bar{h}_n(\param_n)}
\end{align}
for all $\param \in \Param$ with $\norm{\param - \param_n} \leq 1$. It the follows from the triangle inequality, taking supremums, and then expectations that
\begin{align}
    \E\left[\sup_{\param \in \Param, \norm{\param - \param_n}\leq 1} -\nabla f(\param_n, \param - \param_n) \right] \leq \E \left[\sup_{\param \in \Param, \norm{\param - \param_n} \leq 1} - \nabla \bar{g}_n(\param_n, \param -\param_n) \right] + \E\left[\norm{\nabla \bar{h}_n(\param_n)}\right].
    \label{eq: pf_thm_conv_rates_const_pr-optimality_bound}
\end{align}
Our goal will be to control the sum of right hand side. 

We first address the first term on the right hand side of \eqref{eq: pf_thm_conv_rates_const_pr-optimality_bound}. Recall that in this case we are using constant proximal regularization, so $\rho_n \equiv \rho$ for some $\rho \geq 0$. For any $\param \in \Param$
\begin{align}
    \nabla \bar{g}_n(\param_n, \param - \param_n) + \rho\langle \param_n - \param_{n-1}, \param - \param_n\rangle \geq 0
\end{align}
since $\param_n$ is a minimizer of $\bar{g}_n(\param) + \frac{\rho}{2}\norm{\param - \param_{n-1}}^2$ over $\Param$. Then 
\begin{align}
    -\nabla \bar{g}_n(\param_n, \param - \param_n) \leq \rho \langle \param_n - \param_{n-1}, \param - \param_n \rangle \leq \rho \norm{\param_n - \param_{n-1}}\norm{\param - \param_n} \leq \rho \norm{\param_n - \param_{n-1}} 
\end{align}
for any $\param \in \Param$ with $\norm{\param - \param_{n}} \leq 1$. Therefore,
\begin{align}
    \E \left[ \sum_{n = 1}^N \sup_{\param \in \Param, \norm{\param - \param_n} \leq 1} -\nabla \bar{g}_n(\param_n, \param - \param_n) \right] &\leq \rho \E \left[\sum_{n = 1}^N \norm{\param_n - \param_{n-1}}\right]\\
    & \leq \rho \sqrt{N} \E \left[ \left(\sum_{n=1}^N \norm{\param_n - \param_{n-1}}^2 \right)^{1/2} \right]
\end{align}
where we used the Cauchy-Schwartz inequality in the last line. By  Lemma \ref{lem: iterate gap summability}
\begin{align}
\left(\sum_{n=1}^N \norm{\param_n - \param_{n-1}}^2 \right)^{1/2} \leq \sqrt{\frac{2\Delta_0}{\rho + \mu}}
\end{align}
almost surely. Thus,
\begin{align}
    \E \left[ \sum_{n = 1}^N \sup_{\param \in \Param, \norm{\param - \param_n} \leq 1} -\nabla \bar{g}_n(\param_n, \param - \param_n) \right] \leq \rho\sqrt{\frac{2N\Delta_0}{\rho + \mu}}.
    \label{eq: pf_thm_conv_rates_const_pr-surrogate}
\end{align}

We now turn to the second term on the right hand side of \eqref{eq: pf_thm_conv_rates_const_pr-optimality_bound}.
By Lemma \ref{lem: conv_proof_surrogate_error_grad_sum} with $c_n = 1$ and Lemma \ref{lem: iterate gap summability}
\begin{align}
    \E\left[\sum_{n = 1}^N \norm{\nabla \bar{h}_n(\param_n)}\right] \leq \sqrt{N}Lt_{\odot} \E \left[ \left( \sum_{n = 1}^N \norm{\param_n - \param_{n-1}}^2 \right)^{1/2}\right] \leq \sqrt{\frac{2N\Delta_0}{\rho + \mu}}Lt_{\odot}.
    \label{eq:eq: pf_thm_conv_rates_const_pr-error}
\end{align}

Now, summing both sides of \eqref{eq: pf_thm_conv_rates_const_pr-optimality_bound} and using \eqref{eq: pf_thm_conv_rates_const_pr-surrogate} and \eqref{eq:eq: pf_thm_conv_rates_const_pr-error},
\begin{align}
    \sum_{n = 1}^N \E \left[ \sup_{\param \in \Param, \norm{\param - \param_n} \leq 1} -\nabla f(\param_n, \param - \param_n)\right] &= \E \left[ \sum_{n = 1}^N\sup_{\param \in \Param, \norm{\param - \param_n} \leq 1} -\nabla f(\param_n, \param - \param_n) \right]\\
    & \leq \sqrt{2N\Delta_0}\left(\frac{\rho}{\sqrt{\rho + \mu}} + \frac{Lt_{\odot}}{\sqrt{\rho + \mu}} \right).
\end{align}
This shows
\begin{align}
    \min_{1 \leq n \leq N} \E\left[ \sup_{\param \in \Param, \norm{\param - \param_n}\leq 1} -\nabla f(\param_n, \param - \param_n) \right] \leq \frac{\sqrt{2\Delta_0}\left( \frac{\rho}{\sqrt{\rho + \mu}} + \frac{Lt_{\odot}}{\sqrt{\rho + \mu}}\right)}{\sqrt{N}}.
\end{align}

\end{proof}

\subsection{The dynamic proximal regularization case \ref{case: prox_regularization-markovian}}

In this section we prove Theorem \ref{thm: convergence rates to stationarity-extended} for Case \ref{case: prox_regularization-markovian}. Recall the definition of $t_{\text{cov}}$ from \eqref{eq: tcov}. Before proving the theorem we introduce a Lemma adapted from \cite{even2023stochastic} Lemma A.5. This is used to bound the expected sum of the first $N$ dynamic regularization parameters $\rho_n$ in terms of $t_{\text{cov}}$.

\begin{lemma}
    Let $a_n = \max_{v \in V}(n - k^v(n))$. Then 
    \begin{align}
        \sum_{n = 1}^N \E[a_n] \leq Nt_{\text{cov}}.
    \end{align}
    \label{lem: sum max resid time}
\end{lemma}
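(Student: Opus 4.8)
The plan is to bound the backward ``age'' quantity $a_n$ by forward return times through a duality argument, and then exploit the tail-sum (layer-cake) representation of nonnegative integer-valued random variables. Write $T_m := \max_{v \in \mathcal{V}} \tau_{m, v}$ for the forward cover time started just after time $m$; by the very definition of $t_{\text{cov}}$ in \eqref{eq: tcov} we have $\E[T_m \mid \cF_m] \le t_{\text{cov}}$ almost surely for every $m$. Since $a_n$ and each $T_m$ are $\{0,1,2,\dots\}$-valued, I would write $a_n = \sum_{k \ge 1}\1(a_n \ge k)$ and $T_m = \sum_{k\ge 1}\1(T_m \ge k)$, reducing the claim to controlling the level sets $\{a_n \ge k\}$ for each fixed $k$.

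The heart of the argument is the pointwise implication, for each fixed $k \ge 1$,
\[
a_n \ge k \ \Longrightarrow\ T_{n-k} \ge k+1 .
\]
Indeed, $a_n \ge k$ means there is some $v$ with $n - k^v(n) \ge k$, i.e.\ $v$ is not sampled anywhere in the window $\{n-k+1,\dots,n\}$. Setting $m = n-k$, this window equals $\{m+1,\dots,m+k\}$, so $v_{m+j}\neq v$ for $1 \le j \le k$, whence $\tau_{m,v}\ge k+1$ and therefore $T_m \ge \tau_{m,v} \ge k+1$. Since $k^v(n)\ge 1$ always, one has $a_n \le n-1$, so $a_n \ge k$ forces $m = n-k \ge 1$, keeping the index in range. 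For fixed $k$ the map $n \mapsto n-k$ is injective, so summing the implication over $1 \le n \le N$ gives
\[
\sum_{n=1}^N \1(a_n \ge k) \ \le\ \sum_{m=1}^N \1(T_m \ge k).
\]

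Summing this over all $k \ge 1$ and applying the layer-cake identities yields $\sum_{n=1}^N a_n \le \sum_{m=1}^N T_m$ almost surely. Taking expectations, conditioning on $\cF_m$, and invoking $\E[T_m\mid\cF_m]\le t_{\text{cov}}$ then gives
\[
\sum_{n=1}^N \E[a_n] \ \le\ \sum_{m=1}^N \E\big[\E[T_m\mid\cF_m]\big] \ \le\ N\,t_{\text{cov}},
\]
as required; finiteness of $t_{\text{cov}}$ is guaranteed by Proposition \ref{prop: return time finite exponential moments and cover time bound}\ref{item: tcov bound}. I expect the main obstacle to be establishing the backward/forward duality cleanly — in particular getting the window lengths and the off-by-one in the displayed implication exactly right, and verifying the index range $m = n-k \ge 1$ so that the injective reindexing lands inside $\{1,\dots,N\}$. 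Everything after that is routine Fubini and tower-property bookkeeping.
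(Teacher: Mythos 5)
Your proof is correct and follows essentially the same route as the paper's: both rest on the tail-sum representation of $a_n$ and the inclusion $\{a_n \ge k\} \subseteq \{\max_{v}\tau_{n-k,v} \ge k\}$, followed by a reindexing and the bound $\E[\max_v \tau_{m,v}] \le t_{\text{cov}}$. The only cosmetic difference is that you establish the pointwise inequality $\sum_{n=1}^N a_n \le \sum_{m=1}^N T_m$ before taking expectations, whereas the paper sums the corresponding probabilities directly.
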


\begin{proof}
    Since $a_n \leq n-1$ we have
    \begin{align}
        \sum_{n = 1}^N \E[a_n] = \sum_{n = 1}^N \sum_{i = 1}^{n-1} \P(a_n \geq i).
    \end{align}
    Let $b_n = \max_{v \in \mathcal{V}} \tau_{n, v}$. We note that if $a_n \geq i$ then there is $v \in V$ with $v_j \neq v$ for all $n - i <  j \leq n$ and so $\tau_{n-i, v} \geq i$. So we have the inclusion $\{a_n \geq  i\} \subseteq \{b_{n-i} \geq i\}$. Therefore
    \begin{align}
        \sum_{n= 1}^N \sum_{i = 1}^{n-1} \P(a_n \geq i) &= \sum_{i = 1}^N \sum_{n = i+1}^N \P(a_n \geq i)\\ &\leq \sum_{i = 1}^N \sum_{n = i+1}^N \P(b_{n-i} \geq i)\\
        &= \sum_{s = 1}^{N-1} \sum_{t = 1}^{N-s} \P(b_s \geq t)\\
        & \leq \sum_{s = 1}^{N-1} \sum_{t = 1}^{\infty} \P(b_s \geq t)\\
        &\leq N \sup_{n \geq 1} \E[b_n].
    \end{align}
    We have 
    \begin{align}
        \E[b_n] = \E\left[ \E \left[\max_{v \in \mathcal{V}} \tau_{n, v} \Big| \cF_n \right] \right] \leq t_{\text{cov}}
    \end{align}
    so we are done. 
\end{proof}

\begin{proof}[\textbf{Proof of Theorem \ref{thm: convergence rates to stationarity-extended} for Case \ref{case: prox_regularization-markovian}}]
    The proof in this case follows the same strategy as Case \ref{case: prox_regularization-iid}, but is slightly more complicated due to the randomness of the dynamic proximal regularization parameter. 

   Define $\delta_n := \bar{g}_{n-1}(\param_{n-1}) - \bar{g}_n(\param_n)$. By optimality of $\param_n$ and Lemma \ref{lem: surrogate monotonicity}, 
   \begin{align}
       \bar{g}_n(\param_n) + \frac{\rho_n}{2}\norm{\param_n - \param_{n-1}}^2 \leq \bar{g}_{n-1}(\param_{n-1})
   \end{align} 
   so $\norm{\param_n - \param_{n-1}} \leq \sqrt{2\rho_n^{-1}\delta_n}$. Using similar reasoning as in the proof for case \ref{case: prox_regularization-iid}
    \begin{align}
        -\nabla \bar{g}_n(\param_n, \param - \param_n) \leq \rho_n\norm{\param_n - \param_{n-1}} \leq \sqrt{2\rho_n \delta_n}.
    \end{align}
    We have using Cauchy-Schwartz twice,
    \begin{align}
        \sum_{n = 1}^{N} \E[\sqrt{\rho_n \delta_n}]
        & \leq \sum_{n = 1}^N (\E[\rho_n])^{1/2}(\E[\delta_n])^{1/2}\\
        & \leq \left(\sum_{n = 1}^N \E[\rho_n] \right)^{1/2} \left( \sum_{n = 1}^N \E[\delta_n] \right)^{1/2}\\
        & \leq \sqrt{N( \rho + t_{\text{cov}}) \Delta_0}.
    \end{align}
    The last inequality here uses $\rho_n = \rho + \max_{v \in V}(n - k^v(n))$ and Lemma \ref{lem: sum max resid time} as well as $\sum_{n = 1}^N \delta_n \leq \Delta_0$ a.s. It follows that
    \begin{align}
        \sum_{n = 1}^N \E \left[ \sup_{\param \in \Param, \norm{\param - \param_{n-1}} \leq 1} -\nabla \bar{g}_n(\param_n, \param - \param_n) \right] \leq \sqrt{2N(\rho + t_{\text{cov}})\Delta_0}.
    \label{eq: pf_thm_conv_rates_dynamic-surrogate}
    \end{align}

    To handle the gradient error $\norm{\nabla \bar{h}_n(\param_n)}$ we first use Lemma \ref{lem: iterate gap summability} and $\rho_n \geq \rho$ to conclude
    \begin{align}
        \sum_{n = 1}^{N} \frac{\rho + \mu}{2}\norm{\param_n - \param_{n-1}}^2 \leq \sum_{n= 1}^N \frac{\rho_n + \mu}{2}\norm{\param_n - \param_{n-1}}^2 \leq \Delta_0 
    \end{align}
    almost surely. It then follows from Lemma \ref{lem: conv_proof_surrogate_error_grad_sum} that
    \begin{align}
        \sum_{n = 1}^N \E\left[ \norm{\nabla \bar{h}_n(\param_n)}
        \right] \leq \sqrt{N}Lt_{\odot}\E \left[ \left( \sum_{n = 1}^N \norm{\param_n - \param_{n-1}} \right)^{1/2}\right] \leq \sqrt{\frac{2N\Delta_0}{\rho}}Lt_{\odot}.
    \label{eq: pf_thm_conv_rates_dynamic-error}
    \end{align}
    Finally, combining \eqref{eq: pf_thm_conv_rates_dynamic-surrogate} and \eqref{eq: pf_thm_conv_rates_dynamic-error} we get
    \begin{align}
        \sum_{n = 1}^N \E \left[ \sup_{\param \in \Param, \norm{\param - \param_n} \leq 1} -\nabla f(\param_n, \param - \param_n)\right] \leq \sqrt{2N\Delta_0} \left(\sqrt{\rho + t_{\text{cov}}} + \frac{Lt_{\odot}}{\sqrt{\rho + \mu}} \right) 
    \end{align}
    and so we deduce
    \begin{align}
        \min_{1 \leq n \leq N} \E \left[\sup_{\param \in \Param, \norm{\param - \param_n}\leq 1} - \nabla f(\param_n, \param - \param_n) \right] \leq \frac{\sqrt{2\Delta_0} \left( \sqrt{\rho + t_{\text{cov}}} + \frac{Lt_{\odot}}{\sqrt{\rho + \mu}} \right) }{\sqrt{N}}.
    \end{align}
    We complete the proof by substituting the bound for $t_{\text{cov}}$ in Proposition \ref{prop: return time finite exponential moments and cover time bound}.  
\end{proof}

\subsection{The diminishing radius case \ref{case: diminishing_radius-markovian}}

Here we prove Theorem \ref{thm: convergence rates to stationarity-extended} for Case \ref{case: diminishing_radius-markovian}.

\begin{proof}[\textbf{Proof of Theorem \ref{thm: convergence rates to stationarity-extended} for Case \ref{case: diminishing_radius-markovian}}]

Similar to the proof in Case \ref{case: prox_regularization-iid} we have
\begin{align}
    \E \left[ \sup_{\param \in \Param, \norm{\param - \param_n}\leq 1} -\langle \nabla f(\param_n), \param - \param_n \rangle \right] \leq \E \left[ \sup_{\param \in \Param, \norm{\param - \param_n} \leq 1} -\langle \nabla \bar{g}_n(\param_n), \param - \param_n \rangle \right] + \E \left[ \norm{\nabla \bar{h}_n(\param_n)}\right]
\label{eq: eq: pf_thm_conv_rates_dr-optimality_bound}
\end{align}

Let $b_n = r_n \wedge 1$. Then by Lemma \ref{lem: surrogate_grad_inf_lowerbound}
\begin{align}
    &\sum_{n = 1}^N b_{n+1} \E \left[ \sup_{\param \in \Param, \norm{\param - \param_n} \leq 1 } -\langle \nabla \bar{g}_n(\param_n), \param - \param_n \rangle \right]\\
    &\leq \sum_{n = 1}^N \E \left[\langle -\nabla \bar{g}_{n+1}(\param_{n}), \param_{n+1} - \param_{n} \rangle \right] + \sum_{n = 1}^Nr_{n+1} \E\left[\norm{\nabla h_{n}^{v_{n+1}} (\param_{n})}\right] + \sum_{n = 1}^N 2Lr_{n+1}^2.
\label{eq: pf_conf_dr_surrogate_opt_bound}
\end{align}
Because $h_n^{v_{n+1}}$ is non-negative and has $L$-Lipschitz continuous gradients $\norm{\nabla h_n^{v_{n+1}}(\param_n)} \leq \sqrt{2L h_n^{v_{n+1}}(\param_n)}$ (see Lemma \ref{lem: L-smooth_grad_upperbound}). Then
\begin{align}
    \sum_{n = 1}^N r_{n+1} \E \left[\norm{\nabla h_n^{v_{n+1}}(\param_n)} \right] &\leq \sum_{n = 1}^N r_{n+1} \E \left[ \sqrt{2L h_n^{v_{n+1}}(\param_n)}\right]\\
    & \leq \left(\sum_{n = 1}^N r_{n+1}^2 \right)^{1/2} \left(\sum_{n = 1}^{N} \E\left[2Lh_n^{v_{n+1}}(\param_n)\right] \right)^{1/2}\\
    & \leq \sqrt{\frac{2L\Delta_0}{\pi_{\text{min}}}\sum_{n =1}^N r_{n+1}^2}.
\end{align}
Here the second line used Cauchy-Schwartz and then Jensen's inequality to move the square inside the expectation and the last line used Lemma \ref{lem:single gap convergence}. From Proposition \ref{prop: surogate_gradient_step_sum},
\begin{align}
    \sum_{n = 1}^N \E \left[\langle -\nabla \bar{g}_{n+1}(\param_n), \param_{n+1} - \param_n \rangle \right] \leq \Delta_0 + L\sum_{n = 1}^N r_{n+1}^2
\end{align}
so from \eqref{eq: pf_conf_dr_surrogate_opt_bound}
\begin{align}
    \sum_{n = 1}^N b_{n+1}\E \left[ \sup_{\param \in \Param, \norm{\param - \param_n} \leq 1 } -\langle \nabla \bar{g}_n(\param_n), \param - \param_n \rangle \right] \leq \Delta_0 + \sqrt{\frac{2L\Delta_0}{\pi_{\text{min}}}\sum_{n =1}^N r_{n+1}^2} + 3L\sum_{n = 1}^N r_{n+1}^2.
\label{eq: pf_thm_conv_rates_dr-surrogate}
\end{align}
From Lemma \ref{lem: conv_proof_surrogate_error_grad_sum} with $c_n = r_{n+1}$
\begin{align}
    \sum_{n = 1}^N r_{n+1} \E\left[\norm{\nabla \bar{h}_n(\param_n)}\right] \leq \sum_{v \in V} \E \left[ \sum_{n = 1}^N r_{n+1} \norm{\nabla h_n^v(\param_n)} \right]\pi(v) \leq Lt_{\odot} \left(\sum_{n = 1}^N r_{n+1}^2 \right)^{1/2} \E \left[\left(\sum_{n = 1}^N \norm{\param_n - \param_{n-1}}^2 \right)^{1/2}\right].
\end{align}
Since $\norm{\param_n - \param_{n-1}} \leq r_n$ and $(r_n)$ is non-increasing, this bound reduces to 
\begin{align}
    \sum_{n = 1}^{N}r_{n+1} \E \left[\norm{\nabla \bar{h}_n(\param_n)}\right] \leq Lt_{\odot} \sum_{n = 1}^N r_n^2.
\label{eq: pf_thm_conv_rates_dr-error}
\end{align}
Combining \eqref{eq: pf_thm_conv_rates_dr-surrogate} and \eqref{eq: pf_thm_conv_rates_dr-error} with \eqref{eq: eq: pf_thm_conv_rates_dr-optimality_bound} we have
\begin{align}
    \sum_{n = 1}^N b_n \E\left[\sup_{\param \in \Param, \norm{\param - \param_n} \leq 1} -\langle \nabla f(\param_n), \param - \param_n \rangle \right] \leq \Delta_0 + \sqrt{\frac{2L\Delta_0}{\pi_{\text{min}}}\sum_{n = 1}^N r_n^2} + \Big(3 + t_{\odot}\Big)L\sum_{n = 1}^N r_n^2
\end{align}
so 
\begin{align}
    \min_{1 \leq n \leq N} \E\left[\sup_{\param \in \Param, \norm{\param - \param_n} \leq 1} -\langle \nabla f(\param_n), \param - \param_n \rangle \right] \leq \frac{\Delta_0 + \sqrt{\frac{2L\Delta_0}{\pi_{\text{min}}}\sum_{n = 1}^N r_n^2} + \Big(3 + t_{\odot}\Big)L\sum_{n = 1}^N r_n^2}{\sum_{n = 1}^N b_n}.
\end{align}

\end{proof}

\subsection{Proofs of Corollaries}

In this section, we prove Corollaries \ref{cor: conv rates to stationarity-param in interior} and \ref{cor: iteration complexity}.

\begin{proof}[\textbf{Proof of corollary \ref{cor: conv rates to stationarity-param in interior}}]

    Fix $N$ and let $k$ be the integer recognizing the minimum in \eqref{eq: conv_to_stationarity_alg 2 obj iid-extended}. If $\Param = \R^p$ then we may choose $\param^*$ so that $\param^* - \param_{k} = -\frac{\nabla f(\param_{k})}{\norm{\nabla f(\param_{k})}}$. Thus, 
    \begin{align}
        \min_{1 \leq n \leq N} \E[\norm{\nabla f(\param_n)}] \leq \E[\norm{\nabla f(\param_{k})}] &= \E\left[ \langle - \nabla f(\param_{k}), \param^* - \param_{k} \rangle \right]\\
        & \leq \E \left[ \sup_{\param \in \Param, \norm{\param - \param_{k}} \leq 1} \langle - \nabla f(\param_{k}), \param - \param_{k} \rangle \right] = O\left(N^{-1/2}\right).
    \end{align}
    If instead $\Param \neq \R^p$ but the second condition $\text{dist}(\param_k, \partial \Param) \geq c$ holds, we can take $\param - \param_{k} = -c\frac{\nabla f(\param_{k})}{\norm{\nabla f(\param_{k})}}$. In doing so we obtain
    \begin{align}
        \min_{1 \leq n \leq N} \E[\norm{\nabla f(\param_n)}] \leq \E[\norm{\nabla f(\param_{k})}] &= \frac{1}{c} \E\left[ \langle - \nabla f(\param_{k}), \param^* - \param_{k} \rangle \right]\\
        & \leq \frac{1}{c} \E \left[ \sup_{\param \in \Param, \norm{\param - \param_{k}} \leq 1} \langle - \nabla f(\param_{k}), \param - \param_{k} \rangle \right] = O\left( N^{-1/2} \right).
    \end{align}
    This shows \eqref{eq: cor-conv to stationarity-interior-PR}. The proof of \eqref{eq: cor-conv to stationarity-interior-DR} is similar. 
\end{proof}

\begin{proof}[\textbf{Proof of corollary \ref{cor: iteration complexity}}]
The convergence rates of Theorem \ref{thm: convergence rates to stationarity} are of order $O(N^{-1/2})$ for Algorithm \ref{RMISO}. Then we can prove \ref{item: pr iteration complexity} by choosing $N$ large enough so that $N^{-1/2} \leq  \eps$.

If we take $r_n = \frac{1}{\sqrt{n} \log n}$ in Algorithm \ref{RMISO_DR} then its corresponding convergence rate in \ref{thm: convergence rates to stationarity} is of order $O(n^{-1/2}\log n)$. Then \ref{item: dr iteration complexity} follows by using the fact that $n \geq \eps^{-2}(3\log \eps^{-1})^2$ implies $n^{-1/2}\log n \leq \eps$ for sufficiently large $\eps$. Indeed since $\frac{\log x}{\sqrt{x}}$ is decreasing for sufficiently large $x$ we have
\begin{align}
    n^{-1/2}\log n \leq \frac{\eps}{3\log \eps^{-1}} \left( 2 \log \eps^{-1} + 2 \log (3 \log \eps^{-1}) \right) \leq \eps
\end{align}
for $\eps$ sufficiently small.
\end{proof}

\section{Asymptotic Analysis}\label{sec: asymptotic analysis}

We use this section to prove Theorem \ref{thm: a.s. convergence}. Recall that by Proposition \ref{prop: return time finite exponential moments and cover time bound}, there are constants $C_1$ and $C_2$ with $\sup_{n \geq 1} \E[\tau_{n, v}^2 |\cF_n] \leq C_1$ and $\sup_{n \geq 1} \E[\tau_{n, v}^4 | \cF_n] \leq C_2$ for each $v \in \mathcal{V}$. Accordingly, we let $\mu_2 = \max_{v \in V} \sup_{n \geq 1} \left \lVert \E[\tau_{n, v}^2 |\cF_n] \right \rVert_{\infty}$ and $\mu_4 = \max_{v \in \mathcal{V}} \sup_{n \geq 1} \norm{\E[\tau_{n, v}^4 | \cF_n]}_{\infty}$. 

The first Lemma of this section states that the first and second moments of the dynamic regularization parameter $\rho_n$ in Algorithm \ref{RMISO} are uniformly bounded. While $\rho_n$ only appears in Algorithm \ref{RMISO}, the random variable $\max_{v \in V} (n - k^v(n))$ is also present in the analysis of Algorithm \ref{RMISO_DR}. Therefore, this Lemma is used in the analysis of both algorithms in this section.

\begin{lemma}\label{lem: uniform bound of reg parameter}
    Assume \ref{assumption: process properties}. Then there is a constant $C > 0$ such that
    \begin{align}
        \sup_{n \geq 1}\,\, \E[\rho_n] + \sup_{n \geq 1} \E[\rho_n^2] \leq C.
    \end{align}
\end{lemma}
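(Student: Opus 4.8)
The plan is to reduce everything to a geometric tail bound on the integer-valued random variable $a_n := \max_{v \in \mathcal{V}}(n - k^v(n))$, since $\rho_n = \rho + a_n$ and hence it suffices to bound $\sup_{n \geq 1}\E[a_n]$ and $\sup_{n \geq 1}\E[a_n^2]$ by a constant. First I would record the elementary inclusion already used in the proof of Lemma \ref{lem: sum max resid time}: if $a_n \geq i$, then some $v$ is left unvisited throughout the window $(n-i, n]$, which forces $\tau_{n-i, v} \geq i$ by the definition of the return time (Definition \ref{def: passage times}); consequently $\{a_n \geq i\} \subseteq \bigcup_{v \in \mathcal{V}} \{\tau_{n-i, v} \geq i\}$ for every $1 \leq i \leq n-1$. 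I would also note that $a_n \leq n-1$ almost surely, because $k^v(n) \geq 1$, so that $\P(a_n \geq i) = 0$ once $i \geq n$ and only indices $n-i \geq 1$ ever arise.

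Next I would invoke item \ref{item: return time exp moments} of Proposition \ref{prop: return time finite exponential moments and cover time bound}: there exists $s_0 > 0$ and, for each fixed $0 < s < s_0$, a constant $C_s$ with $\E[e^{s\tau_{m,v}} \mid \cF_m] \leq C_s$ uniformly in $m \geq 1$ and $v \in \mathcal{V}$. Taking total expectations and applying Markov's inequality to $e^{s\tau_{n-i,v}}$ yields $\P(\tau_{n-i,v} \geq i) \leq C_s e^{-si}$. Combining this with the union bound over $v$ supplied by the inclusion above gives the key geometric tail estimate $\P(a_n \geq i) \leq |\mathcal{V}|\, C_s\, e^{-si}$, valid for all $n$ and all $i \geq 1$ (trivially so for $i \geq n$).

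Finally I would convert the tail bound into moment bounds through the standard identities for a nonnegative integer-valued random variable, namely $\E[a_n] = \sum_{i \geq 1}\P(a_n \geq i)$ and $\E[a_n^2] = \sum_{i \geq 1}(2i-1)\P(a_n \geq i)$. Both resulting series are dominated term-by-term by $|\mathcal{V}|\,C_s\sum_{i \geq 1} e^{-si}$ and $|\mathcal{V}|\,C_s\sum_{i \geq 1}(2i-1)e^{-si}$ respectively, which converge and are independent of $n$. Hence $\sup_n\E[a_n]$ and $\sup_n\E[a_n^2]$ are finite, and writing $\E[\rho_n] = \rho + \E[a_n]$ together with $\E[\rho_n^2] = \rho^2 + 2\rho\,\E[a_n] + \E[a_n^2]$ produces the claimed uniform constant $C$ for $\sup_n \E[\rho_n] + \sup_n \E[\rho_n^2]$.

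I expect the only genuine subtlety — and the step worth stating carefully — to be the uniformity in $n$: the tail of $a_n$ involves return times started at the \emph{varying} past epoch $n-i$, so the argument truly needs the uniform-in-time conditional exponential moment bound of Proposition \ref{prop: return time finite exponential moments and cover time bound}, not merely a moment bound at one fixed time. Everything after the geometric tail estimate is a routine tail-sum computation.
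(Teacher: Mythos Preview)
Your proof is correct and follows the same skeleton as the paper's: both rely on the inclusion $\{n - k^v(n) \geq i\} \subseteq \{\tau_{n-i,v} \geq i\}$, convert it into a tail bound on $a_n$, and then sum tails to get moment bounds uniform in $n$. The only substantive difference is which part of Proposition \ref{prop: return time finite exponential moments and cover time bound} is fed into Markov's inequality. The paper uses the polynomial-moment consequence, bounding $\P(\tau_{n-j,v} \geq j) \leq \mu_2/j^2$ for the first moment and the analogous fourth-moment bound $\mu_4/j^2$ for the second, and it controls the maximum over $v$ crudely by the sum $\sum_v \E[(n-k^v(n))^p]$. You instead invoke the exponential-moment bound directly, obtaining a single geometric tail $\P(a_n \geq i) \leq |\mathcal{V}|\,C_s e^{-si}$ that handles both moments at once. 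Your route is marginally cleaner (one tail estimate serves all moments), while the paper's yields explicit constants in terms of $\mu_2, \mu_4, |\mathcal{V}|$ rather than the less transparent $C_s$; neither gains anything essential over the other.
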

\begin{proof}
    Fix $v \in \mathcal{V}$. For a positive integer $j$ we have
    \begin{align}
        \{n - k^v(n) \geq j\} = \{ k^v(n) \leq n - j\} \subseteq \{\tau_{n - j, v} \geq j\}.
    \end{align}
    Therefore, we get 
    \begin{align}
        \E[(n - k^v(n))] &= \sum_{j = 1}^{\infty} \P(n - k^v(n) \geq j) \\
        &\leq \sum_{j = 1}^{\infty} \P(\tau_{n - j, v} \geq j)\\
        & \leq \sum_{j = 1}^{\infty} \frac{\E[\tau_{n-j, v}^2]}{j^2}\\
        &\leq  \mu_2\sum_{j = 1}^{\infty} j^{-2}
    \end{align}
    since $\E[\tau_{n - j, v}^2] \leq \mu_2$. Finally,
    \begin{align}
        \E[\rho_n] = \rho + \E\left[\max_{v \in \mathcal{V}}(n - k^v(n))\right] \leq \rho +  \sum_{v \in V} \E[n - k^v(n)] \leq \rho + |\mathcal{V}|\mu_2 \sum_{j = 1}^{\infty} j^{-2}.
    \end{align}
    To bound the second moment we follow the same approach:
    \begin{align}
        \E[(n- k^v(n))^2] &= \sum_{j = 1}^{\infty} \P \left((n - k^v(n))^2 \geq j \right)\\
        & \leq \sum_{j = 1}^{\infty} \P(\tau_{n - j, v}^2 \geq j)\\
        & \leq \sum_{j = 1}^{\infty} \frac{\E[\tau_{n - j, v}^4]}{j^2}\\
        & \leq \mu_4 \sum_{j = 1}^{\infty} j^{-2}. 
    \end{align}
    The proof is completed by mimicking the last line of the proof bounding the first moment.
    
\end{proof}

\subsection{The dynamic proximal regularization case \ref{case: prox_regularization-markovian}} \label{sec: dpr a.s. convergence}

Here we prove Theorem \ref{thm: a.s. convergence} \ref{item: stationary_limit_points-dynamic}. Our first lemma, Lemma \ref{lem: gap sum dynamic prox reg}, is similar to \ref{lem: conv_proof_surrogate_error_grad_sum} and key to showing that $\norm{\nabla \bar{h}_n(\param_n)} \to 0$. The difference is that we must deal with $\norm{\param_n - \param_{k^v(n) - 1}}^2$ instead of $\norm{\param_n - \param_{k^v(n)-1}}$. In order to relate this to the sequence of one step iterate differences $(\norm{\param_n - \param_{n-1}}^2)$ we need to use the Cauchy-Schwartz inequality which introduces a dependence on $\mu_2$ as well as $t_{\text{hit}}$.

\begin{lemma}\label{lem: gap sum dynamic prox reg}
Let $(\param_n)_{n \geq 0}$ be an output of Algorithm \ref{RMISO}. Assume case \ref{case: prox_regularization-markovian}. Then
\begin{align}
    \sum_{n = 1}^{\infty} \E[\bar{h}_n(\param_n)] < \infty.
\end{align}
\end{lemma}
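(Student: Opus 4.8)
The plan is to dominate the averaged error $\bar h_n(\param_n)$ by the squared displacement of $\param_n$ from the points at which the individual surrogates were last made tight, and then run the same change-of-order-of-summation argument as in the key lemma (Lemma~\ref{lem: conv_proof_surrogate_error_grad_sum}), but now for a \emph{squared} quantity. Since $g_n^v \in \mathcal{S}_L(f^v,\param_{k^v(n)-1})$, Proposition~\ref{prop: surrogate properties} gives $h_n^v(\param_n) \le \tfrac{L}{2}\norm{\param_n - \param_{k^v(n)-1}}^2$, hence $\bar h_n(\param_n) \le \tfrac{L}{2}\sum_{v}\norm{\param_n-\param_{k^v(n)-1}}^2\pi(v)$. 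Writing $\Delta_i := \norm{\param_i-\param_{i-1}}^2$, telescoping and applying the triangle inequality followed by Cauchy--Schwartz yields
\[
\norm{\param_n-\param_{k^v(n)-1}}^2 \le (n-k^v(n)+1)\sum_{i=k^v(n)}^n \Delta_i,
\]
the squared analogue of \eqref{eq: sketch_schwarts_breakup}. The extra weight $(n-k^v(n)+1)$ is precisely what forces a \emph{second} moment of the return time to appear, which is why Proposition~\ref{prop: return time finite exponential moments and cover time bound}\ref{item: return time exp moments} (finiteness of $\mu_2$) is needed.

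First I would fix $v$ and interchange the sums over $n$ and $i$ exactly as in Lemma~\ref{lem: conv_proof_surrogate_error_grad_sum}. With $p^v(i)=\inf\{j>i:v_j=v\}$, the condition $k^v(n)\le i\le n$ (for $n\ge i$) holds precisely for $i\le n\le N\wedge(p^v(i)-1)$, on which range $k^v(n)=k^v(i)$, so the inner sum becomes $\sum_{n=i}^{N\wedge(p^v(i)-1)}(n-k^v(i)+1)$. Splitting $n-k^v(i)+1=(n-i)+(i-k^v(i)+1)$ and bounding the number of terms by $p^v(i)-i=\tau_{i,v}$ gives the clean estimate $\tau_{i,v}^2 + \tau_{i,v}(i-k^v(i))$. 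The point is that this anchors the return time at the \emph{current} time $i$, so it is amenable to conditioning on $\cF_i$. Setting $b_i := \max_v(i-k^v(i)) = \rho_i-\rho$ and summing over $v$ (with $\sum_v\pi(v)=1$) I obtain, for every $N$,
\[
\sum_{n=1}^N \bar h_n(\param_n) \le \frac{L}{2}\sum_{i=1}^N \Delta_i\Big(\textstyle\sum_v \tau_{i,v}^2\,\pi(v) + b_i\sum_v\tau_{i,v}\,\pi(v)\Big).
\]

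Next I would take expectations and condition on $\cF_i$: both $\Delta_i$ and $b_i$ are $\cF_i$-measurable, while the forward return times are not. Proposition~\ref{prop: return time finite exponential moments and cover time bound}\ref{item: return time exp moments} gives $\E[\tau_{i,v}^2\mid\cF_i]\le \mu_2$ and Assumption~\ref{assumption: process properties} gives $\E[\tau_{i,v}\mid\cF_i]\le t_{\text{hit}}$, so the parenthesis is at most $\mu_2 + b_i\,t_{\text{hit}}$ and
\[
\sum_{n=1}^N\E[\bar h_n(\param_n)] \le \frac{L}{2}\Big(\mu_2\sum_{i=1}^N\E[\Delta_i] + t_{\text{hit}}\sum_{i=1}^N\E[b_i\Delta_i]\Big).
\]
Both sums are controlled by the iterate-stability bound of Lemma~\ref{lem: iterate gap summability}\ref{item: prox_reg summability}, namely $\sum_i(\rho_i+\mu)\Delta_i\le 2\Delta_0$ a.s.: since $\mu>0$ this gives $\sum_i\E[\Delta_i]\le 2\Delta_0/\mu$, and since $b_i=\rho_i-\rho\le\rho_i$ it gives $\sum_i\E[b_i\Delta_i]\le\sum_i\E[\rho_i\Delta_i]\le 2\Delta_0$ --- this last estimate is exactly the summability of $\max_v(n-k^v(n))\norm{\param_n-\param_{n-1}}^2$ that the dynamic scheme affords (cf.\ Remark~\ref{rem: iterate stability and regularization}). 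Hence $\sum_{n}\E[\bar h_n(\param_n)]\le L\Delta_0\big(\mu_2/\mu + t_{\text{hit}}\big)<\infty$.

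The main obstacle is the bookkeeping in the interchange of summation: because of the squaring, the multiplicative weight $(n-k^v(n)+1)$ is itself of the order of a return time, and a naive exchange leaves the return time $\tau_{k^v(i),v}$ anchored at the past random index $k^v(i)$, which cannot be handled by conditioning on $\cF_i$. The decomposition $\tau_{k^v(i),v}=\tau_{i,v}+(i-k^v(i))$, together with the observation that the ``lag'' $i-k^v(i)$ is dominated by $b_i=\rho_i-\rho$ and therefore absorbed by the strengthened iterate stability of the dynamic regularization, is exactly what makes the conditional-expectation step go through and explains why both $\mu_2$ and $t_{\text{hit}}$ enter the final bound.
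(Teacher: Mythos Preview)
Your proof is correct and follows essentially the same route as the paper's: the Cauchy--Schwartz bound $\|\param_n-\param_{k^v(n)-1}\|^2\le(n-k^v(n)+1)\sum_{i=k^v(n)}^n\Delta_i$, the same interchange of summation, the same split $n-k^v(i)+1=(n-i)+(i-k^v(i)+1)$ yielding the two contributions $\mu_2$ and $(i-k^v(i))\,t_{\text{hit}}$ after conditioning on $\cF_i$, and the same use of the dynamic regularization (via $b_i\le\rho_i$) to make $\sum_i b_i\Delta_i$ summable. The only cosmetic differences are that you average over $v$ with weights $\pi(v)$ from the start whereas the paper fixes $v$, and that you invoke $\mu>0$ where the paper invokes $\rho>0$ to get $\sum_i\E[\Delta_i]<\infty$ (both are mild implicit assumptions; in the dynamic case $\rho_n\ge 1$ for $n\ge 2$ when $|\mathcal V|\ge 2$, which suffices).
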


\begin{proof}
    Since $\bar{h}_n(\param_n) = \sum_{v \in V} h_n^v(\param_n) \pi(v)$ and $\mathcal{V}$ is finite, it is sufficient to show $\sum_{n = 1}^{\infty} \E[h_n^v(\param_n)] < \infty$ for each $v \in V$.  Before starting recall that by Lemma \ref{lem: iterate gap summability}, and $\rho_n \geq \rho > 0$
    \begin{align}
        \sum_{n = 1}^{\infty} \frac{\rho_n}{2} \norm{\param_n -\param_{n-1}}^2 \leq \Delta_0 \quad \text{ and } \quad  \sum_{n = 1}^{\infty} \norm{\param_n - \param_{n-1}}^2 \leq \frac{2}{\rho}\Delta_0
    \end{align}
    almost surely. This implies
    \begin{align}
        \E \left[ \sum_{n = 1}^{\infty} \rho_n \norm{\param_n - \param_{n-1}}^2 \right] + \E \left[ \sum_{n = 1}^{\infty} \norm{\param_n - \param_{n-1}}^2 \right] < \infty.
    \end{align}

    Fix $v \in V$. For each $n$ we have $g_n^v \in \mathcal{S}_{L}(f^v, \param_{k^v(n)-1})$. Then using Proposition \ref{prop: surrogate properties}, the triangle inequality and Cauchy Schwartz
    \begin{align}
        |h_n^v(\param_n)| \leq \frac{L}{2}\norm{\param_n - \param_{k^v(n)-1}}^2 \leq \frac{L}{2}(n - k^v(n) + 1) \sum_{i = k^v(n)}^n \norm{\param_i - \param_{i-1}}^2.
    \label{eq: pf_gap_sum_prox_cs_bound}
    \end{align}
    Let $B_n = (n - k^v(n) + 1)\sum_{i = k^v(n)}^n \norm{\param_i - \param_{i-1}}^2$. We claim that $ \E \left[\sum_{n = 1}^{\infty} B_n \right] < \infty$. As in Lemma \ref{lem: conv_proof_surrogate_error_grad_sum} let $p^v(n) = \inf\{j > n : v_j = v\}$ be the next time strictly after time $n$ the sampling algorithm visits $v$. Exchanging the order of summation we have
    \begin{align}
         \E \left[\sum_{n = 1}^{\infty} (n - k^v(n) + 1) \sum_{i = k^v(n)}^n \norm{\param_i -\param_{i-1}}^2 \right] &= \E \left[ \sum_{i = 1}^{\infty} \norm{\param_{i} - \param_{i - 1}}^2 \sum_{n = i}^{p^v(i) - 1} (n - k^v(n) + 1)\right]\\
         &= \E \left[ \sum_{i = 1}^{\infty} \norm{\param_i - \param_{i-1}}^2 \sum_{n = i}^{\infty} (n - k^v(n)+1)\1(p^v(i) > n) \right].
    \end{align}
    The equality $\{p^v(i) > n\} = \{\tau_{i, v} \geq n - i + 1\}$ holds as both are equal to the event $\{v_j \neq v: i < j \leq n + 1\}$. Moreover, $k^v(n) = k^v(i)$ on $\{p^v(i) > n\}$ since there is no visit to $v$ between times $i$ and $n$. Therefore,
    \begin{align}
        &\E \left[ \sum_{i = 1}^{\infty} \norm{\param_i - \param_{i-1}}^2 \sum_{n = i}^{\infty} (n - k^v(n) + 1)\1(p^v(i) > n)\right]\\
        &= \E \left[ \sum_{i = 1}^{\infty} \norm{\param_i - \param_{i-1}}^2 \sum_{n = i}^{\infty} (n - k^v(i) + 1)\1(\tau_{i, v} \geq n - i + 1)\right]\\
        & = \E \left[ \sum_{i = 1}^{\infty} \norm{\param_i - \param_{i-1}}^2 \sum_{n = i}^{\infty} (n - k^v(i)+1)\P(\tau_{i, v} \geq n - i + 1|\cF_i)\right]
    \end{align}
    where the last line used $\param_i, \param_{i-1}$ and $k^v(i)$ are all measurable with respect to $\cF_i$. We have
    \begin{align}
        &\sum_{n = i}^{\infty}(n - k^v(i) + 1)\P(\tau_{i, v} \geq n - i + 1|\cF_i)\\
        &= \sum_{n = i}^{\infty} (n - i + 1)\P(\tau_{i, v} \geq n - i + 1|\cF_i) + (i - k^v(i))\sum_{n = i}^{\infty} \P(\tau_{i, v} \geq n - i + 1 |\cF_i)\\
        &= \E[\tau_{i, v}^2 |\cF_i] + (i - k^v(i)) \E[\tau_{i, v}|\cF_i]\\
        & \leq \mu_2 + (i - k^v(i))t_{\text{hit}}.
    \end{align}
    Finally,
    \begin{align}
        &\E \left[ \sum_{i = 1}^{\infty} \norm{\param_i - \param_{i-1}}^2 \sum_{n = i}^{\infty} (n - k^v(i)+1)\P(\tau_{i, v} \geq n - i + 1|\cF_i)\right]\\
        & \leq \mu_2 \E \left[ \sum_{i = 1}^{\infty} \norm{\param_i - \param_{i-1}}^2 \right] + t_{\text{hit}} \E \left[ \sum_{i = 1}^{\infty} (i - k^v(i))\norm{\param_i - \param_{i-1}}^2 \right]\\
        & \leq \mu_2 +  \E \left[ \sum_{i = 1}^{\infty} \norm{\param_i - \param_{i-1}}^2 \right] + t_{\text{hit}} \E \left[ \sum_{i = 1}^{\infty} \rho_i \norm{\param_i - \param_{i-1}}^2 \right] < \infty.
    \end{align}
    This shows $\E \left[ \sum_{n = 1}^{\infty} B_n \right] < \infty$. The proof is completed by using Fubini's Theorem and \eqref{eq: pf_gap_sum_prox_cs_bound} to conclude 
    \begin{align}
        \sum_{n = 1}^{\infty} \E[h_n^v(\param_n)] = \E \left[ \sum_{n =1}^{\infty} h_n^v(\param_n) \right] \leq \frac{L}{2} \E \left[ \sum_{n = 1}^{\infty} B_n \right] < \infty.
    \end{align}
    This completes the proof.
\end{proof}

We now prove Theorem \ref{thm: a.s. convergence} \ref{item: stationary_limit_points-dynamic}.

\begin{proof}[\textbf{Proof of Theorem \ref{thm: a.s. convergence}} \ref{item: stationary_limit_points-dynamic}]

Starting as in the proofs of Theorem \ref{thm: convergence rates to stationarity}
\begin{align}
    \E \left[ \sup_{\param \in \Param, \norm{\param - \param_n} \leq 1} - \nabla f(\param_n, \param - \param_n) \right] 
    &\leq \E \left[ \sup_{\param \in \Param, \norm{\param - \param_n} \leq 1} -\nabla \bar{g}_n(\param_n, \param - \param_n)\right] + \E \left[ \norm{\nabla \bar{h}_n(\param_n)} \right]\\
\end{align}
Using the same argument as in the proof of Theorem \ref{thm: convergence rates to stationarity} for Case \ref{case: prox_regularization-markovian},
\begin{align}
    \E \left[ \sup_{\param \in \Param, \norm{\param - \param_n} \leq 1} -\nabla \bar{g}_n(\param_n, \param - \param_n)\right] \leq \E[\rho_n\norm{\param_n - \param_{n-1}}] \leq \E [\sqrt{2\rho_n \delta_n}]
\end{align}
where $\delta_n = \bar{g}_{n-1}(\param_{n-1}) - \bar{g}_n(\param_n)$. By Cauchy-Schwartz and Lemma \ref{lem: uniform bound of reg parameter}
\begin{align}
    \E[\sqrt{2\rho_n \delta_n}] \leq \sqrt{2\E[\rho_n ]\E[\delta_n]} \leq C\sqrt{\E[\delta_n]}
\end{align}
for some $C > 0$ independent of $n$. 
By Jensen's inequality and Lemma \ref{lem: L-smooth_grad_upperbound}
\begin{align}
    \E[\norm{\nabla \bar{h}_n(\param_n)}] \leq \sqrt{\E[\norm{\nabla \bar{h}_n(\param_n)}^2]} \leq \sqrt{2L \E[\bar{h}_n(\param_n)]} 
\end{align}
We have
\begin{align}
    \sum_{n = 1}^{\infty} \E[\delta_n] = \sum_{n = 1}^{\infty} \E[\bar{g}_{n-1}(\param_{n-1})] - \E[\bar{g}_n(\param_n)] \leq \Delta_0 < \infty
\end{align}
so $\E[\delta_n] \to 0$ as $n \to \infty$. Also, $\sqrt{\E[\bar{h}_n(\param_n)]} \to 0$ by Lemma \ref{lem: gap sum dynamic prox reg}. Therefore
\begin{align}
    \limsup_{n \to \infty} \E \left[ \sup_{\param \in \Param, \norm{\param - \param_n} \leq 1} - \nabla f(\param_n, \param - \param_n) \right] 
    &\leq \limsup_{n \to \infty} \left( \E \left[ \sup_{\param \in \Param, \norm{\param - \param_n} \leq 1} -\nabla \bar{g}_n(\param_n, \param - \param_n)\right] + \E \left[ \norm{\nabla \bar{h}_n(\param_n)} \right] \right)\\
    & \leq \lim_{n \to \infty} \left( C \sqrt{\E[\delta_n]} + \sqrt{2L \E[\bar{h}_n(\param_n)]} \right) = 0.
\end{align}

We follow a similar approach to show that 
\begin{align}
    \E \left[ \left( \sup_{\param \in \Param, \norm{\param - \param_n}\leq 1} - \nabla f(\param_n, \param - \param_n) \right)^2 \right] \to 0.
\end{align}
Notice that the sub-optimality measure $\sup_{\param \in \Param, \norm{\param - \param_n} \leq 1} - \nabla f(\param_n, \param - \param_n)$ is always non-negative, since we can take $\param = \param_n$. Then from the inequality
\begin{align}
    \sup_{\param \in \Param, \norm{\param - \param_n} \leq 1} -\nabla f(\param_n, \param - \param_n) \leq \sup_{\param \in \Param, \norm{\param - \param_n} \leq 1} -\nabla \bar{g}_n(\param_n, \param - \param_n) + \norm{\nabla \bar{h}_n(\param_n)}
\end{align}
and  Cauchy-Schwartz we get
\begin{align}
    \E \left[ \left( \sup_{\param \in \Param, \norm{\param - \param_n} \leq 1} - \nabla f(\param_n, \param - \param_n) \right)^2 \right] \leq 2 \E \left[ \left( \sup_{\param \in \Param, \norm{\param - \param_n} \leq 1} - \nabla \bar{g}_n(\param_n, \param - \param_n) \right)^2 \right] + 2 \E \left[ \norm{\nabla \bar{h}_n(\param_n)}^2 \right].
\end{align}
Mimicking the proof above and using Lemma \ref{lem: uniform bound of reg parameter}
\begin{align}
\E \left[ \left( \sup_{\param \in \Param, \norm{\param - \param_n} \leq 1} - \nabla \bar{g}_n(\param_n, \param - \param_n) \right)^2 \right] \leq \E[\rho_n^2\norm{\param_n - \param_{n-1}}^2] &\leq 2\E[\rho_n \delta_n]\\
& \leq 2 \sqrt{\E[\rho_n^2] \E[\delta_n^2]}.\\
& \leq C \sqrt{\E[\delta_n^2]}.
\end{align}
Since $\sum_{n = 1}^{\infty} \delta_n \leq \Delta_0$
we have $\delta_n \to 0$ almost surely. Therefore, an application of the dominated convergence theorem shows $\E[\delta_n^2] \to 0$. Using Lemmas \ref{lem: gap sum dynamic prox reg} and \ref{lem: L-smooth_grad_upperbound} again to show $\E[\norm{\nabla \bar{h}_n(\param_n)}^2] \to 0$ completes the proof.

\end{proof}

\begin{remark}\label{rem: no convergence constant pr}
\normalfont
    The proof of Lemma \ref{lem: gap sum dynamic prox reg} demonstrates one of the main difficulties in proving asymptotic convergence for constant proximal regularization. In particular, our techniques require us to show that
    \begin{align}
        \E \left[ \sum_{i = 1}^{\infty} (i - k^v(i))\norm{\param_i - \param_{i-1}}^2 \right] < \infty.
        \label{eq: no convergence pr remark-i-k^v(i) sum}
    \end{align}
    The term $i - k^v(i)$ appears as the residual when we swap $n - k^v(i) + 1$ for $n - i + 1$ in order to show
    \begin{align*}
        \sum_{n = i}^{\infty} (n - k^v(i) + 1) \P(\tau_{i, v} \geq n - i + 1 |\cF_i) \leq \mu_2 + (i - k^v(i))t_{\text{hit}}
    \end{align*}
    To avoid this, an idea is to notice that $\tau_{i, v} \geq n - i + 1$ only if $\tau_{k^v(i), v} \geq n - k^v(i) + 1$ and instead compute
    \begin{align*}
        \sum_{n = i}^{\infty} (n - k^v(i) + 1)\P(\tau_{k^v(i), v} \geq n - k^v(i) + 1 |\cF_{k^v(i)}).
    \end{align*}
    The problem here is that $k^v(i)$ is \emph{not} a stopping time so, among other things, the $\sigma$-algebra $\cF_{k^v(i)}$ may not be well defined. Intuitively, $i - k^v(i)$ represents a gap in knowledge since we must wait until time $i$ to know the last time $v$ was visited.
    
    The use of dynamic proximal regularization bakes \eqref{eq: no convergence pr remark-i-k^v(i) sum} into the algorithm. Lemmas \ref{lem: uniform bound of reg parameter} and \ref{lem: iterate gap summability} suggests that it may be true with constant proximal regularization: if $(i - k^v(n))$ and $\norm{\param_i -\param_{i-1}}^2$ were independent then 
    \begin{align*}
        \E \left[ \sum_{i = 1}^{\infty} (i - k^v(i))\norm{\param_i - \param_{i-1}}^2 \right] &= \sum_{i = 1}^{\infty} \E[i - k^v(i)]\E[\norm{\param_i - \param_{i-1}}^2]\\
        & \leq C \sum_{i = 1}^{\infty} \E [\norm{\param_i - \param_{i-1}}^2] < \infty.
    \end{align*}
    However, $k^v(i)$, $\param_i$, and $\param_{i-1}$ are all determined by the behavior of the sampling process so we do not have this independence.

    We will see in the next subsection  that diminishing radius overcomes this issue by bounding the difference $\norm{\param_i - \param_{i-1}}^2$ by a deterministic quantity. 
\end{remark}

\subsection{The diminishing radius case \ref{case: diminishing_radius-markovian}}

Here we prove Theorem \ref{thm: a.s. convergence} \ref{item: stationary_limit_points}. Lemma \ref{lem: gap sum dr} is an analogue of Lemma \ref{lem: gap sum dynamic prox reg} for the diminishing radius case. The remaining argument is similar to that used in \cite{lyu2021BCD} and \cite{lyu2023srmm} to analyze block majorization-minimization and SMM with diminishing radius respectively. 

\begin{lemma}\label{lem: gap sum dr}
    Let $(\param_n)_{n \geq 0}$ be an output of Algorithm \ref{RMISO_DR}. Assume Case \ref{case: diminishing_radius-markovian}. Then almost surely
    \begin{align}
        \sum_{n = 1}^{\infty} \bar{h}_n(\param_n) < \infty.
    \end{align}
\end{lemma}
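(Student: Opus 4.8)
The plan is to follow the blueprint of the proof of Lemma~\ref{lem: gap sum dynamic prox reg}, exploiting the decisive simplification afforded by the diminishing radius restriction. Since $\bar{h}_n(\param_n) = \sum_{v \in \mathcal{V}} h_n^v(\param_n)\pi(v)$ and $\mathcal{V}$ is finite, it suffices to show $\E\left[\sum_{n=1}^{\infty} h_n^v(\param_n)\right] < \infty$ for each fixed $v$; by Tonelli's theorem (all terms are nonnegative) this yields $\sum_n \bar{h}_n(\param_n) < \infty$ almost surely. Because $g_n^v \in \mathcal{S}_L(f^v, \param_{k^v(n)-1})$, Proposition~\ref{prop: surrogate properties} gives $h_n^v(\param_n) \le \frac{L}{2}\norm{\param_n - \param_{k^v(n)-1}}^2$, and the triangle inequality together with Cauchy--Schwarz yields
\begin{align*}
    h_n^v(\param_n) \le \frac{L}{2}(n - k^v(n) + 1)\sum_{i=k^v(n)}^{n} \norm{\param_i - \param_{i-1}}^2.
\end{align*}

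Next I would sum over $n$ and exchange the order of summation using the next-visit time $p^v(i) = \inf\{j > i : v_j = v\}$ exactly as in Lemma~\ref{lem: gap sum dynamic prox reg}: the index $i$ contributes precisely for $n \in [i, p^v(i)-1]$, on which $k^v(n) = k^v(i)$, and $\{p^v(i) > n\} = \{\tau_{i,v} \ge n-i+1\}$. This gives
\begin{align*}
    \sum_{n=1}^{\infty} h_n^v(\param_n) \le \frac{L}{2}\sum_{i=1}^{\infty} \norm{\param_i - \param_{i-1}}^2 \sum_{n=i}^{\infty} (n - k^v(i) + 1)\1(\tau_{i,v} \ge n - i + 1).
\end{align*}
Here is the key departure from the constant-regularization obstruction of Remark~\ref{rem: no convergence constant pr}: rather than retaining the random factor $\norm{\param_i - \param_{i-1}}^2$, I use the deterministic bound $\norm{\param_i - \param_{i-1}}^2 \le r_i^2$ from Proposition~\ref{prop:iterate gap} to pull it outside the expectation. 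The remaining inner expectation is then handled by conditioning on $\cF_i$ (under which $k^v(i)$ is measurable) and splitting $n - k^v(i) + 1 = (n - i + 1) + (i - k^v(i))$, producing
\begin{align*}
    \E\left[\sum_{n=i}^{\infty}(n - k^v(i)+1)\1(\tau_{i,v} \ge n-i+1)\right] \le \mu_2 + t_{\text{hit}}\,\E[i - k^v(i)],
\end{align*}
which is bounded uniformly in $i$ because $\E[i - k^v(i)] \le \mu_2 \sum_{j \ge 1} j^{-2}$ as established in the proof of Lemma~\ref{lem: uniform bound of reg parameter}.

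Combining these bounds gives $\E\left[\sum_n h_n^v(\param_n)\right] \le \frac{L}{2}(\mu_2 + t_{\text{hit}} C)\sum_{i=1}^{\infty} r_i^2$, finite by the square-summability in Assumption~\ref{assumption: sequences}, and summing over the finitely many $v$ completes the argument. The main obstacle---and precisely the reason this case succeeds where constant proximal regularization fails---is the ``knowledge gap'' term $(i - k^v(i))$ multiplying $\norm{\param_i - \param_{i-1}}^2$; the deterministic per-step bound $\norm{\param_i - \param_{i-1}} \le r_i$ severs the problematic correlation between this gap and the iterate increments, letting me factor the estimate into a uniformly bounded expectation times the summable series $\sum_i r_i^2$. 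The one elementary point I would verify carefully is the conditional identity $\sum_{m \ge 1} m\,\P(\tau_{i,v} \ge m \mid \cF_i) = \tfrac{1}{2}\big(\E[\tau_{i,v}^2 \mid \cF_i] + \E[\tau_{i,v} \mid \cF_i]\big) \le \mu_2$, which underlies the first term in the display above.
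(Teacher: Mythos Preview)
Your proposal is correct and follows essentially the same route as the paper's proof: bound $h_n^v(\param_n)$ by $\frac{L}{2}(n-k^v(n)+1)\sum_{i=k^v(n)}^n\norm{\param_i-\param_{i-1}}^2$, replace $\norm{\param_i-\param_{i-1}}^2$ by the deterministic $r_i^2$, swap the order of summation via $p^v(i)$, and bound the residual $(i-k^v(i))$ term using the estimate from Lemma~\ref{lem: uniform bound of reg parameter}. The only cosmetic difference is that the paper packages the last step through the shorthand $\rho_i = \rho + \max_v(i - k^v(i))$ and invokes Lemma~\ref{lem: uniform bound of reg parameter} directly, whereas you bound $\E[i - k^v(i)]$ by citing the computation inside that lemma's proof; your handling of $\sum_{m\ge 1} m\,\P(\tau_{i,v}\ge m\mid\cF_i) = \tfrac12(\E[\tau_{i,v}^2\mid\cF_i]+\E[\tau_{i,v}\mid\cF_i]) \le \mu_2$ is in fact slightly more precise than the paper's.
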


\begin{proof}
    The strategy here is nearly the same as in Lemma \ref{lem: gap sum dynamic prox reg} except that we use $\norm{\param_n - \param_{n-1}} \leq r_n$.

    Again, it is sufficient to show $\sum_{n = 1}^{\infty} h_n^v(\param_n) < \infty$ almost surely for each $v \in \mathcal{V}$. Fixing $v$ we have $g_n^v \in \mathcal{S}_L(f^v, \param_{k^v(n) - 1})$. Then Proposition \ref{prop: surrogate properties}, the triangle inequality, and Cauchy-Schwartz give us
    \begin{align}
        |h_n^v(\param_n)| \leq \frac{L}{2}\norm{\param_n - \param_{k^v(n)-1}}^2 &\leq \frac{L}{2}(n - k^v(n)+1)\sum_{i = k^v(n)}^n \norm{\param_i - \param_{i-1}}^2\\
        & \leq \frac{L}{2}(n - k^v(n) + 1) \sum_{i = 1}^n r_i^2.
    \end{align}
    Let $B_n = (n - k^v(n) + 1) \sum_{i = k^v(n)}^n r_i^2$. We mimic the proof of Lemma \ref{lem: gap sum dynamic prox reg} with $r_i^2$ in place of $\norm{\param_i - \param_{i-1}}^2$ to conclude
    \begin{align}
        \E \left[ \sum_{n = 1}^{\infty} B_n \right] \leq \mu_2 \sum_{i = 1}^{\infty} r_i^2 + t_{\text{hit}}\E \left[ \sum_{i = 1}^{\infty} \rho_i r_i^2 \right].
    \end{align}
    The first term on the right hand side is finite by Assumption \ref{assumption: sequences}. Moreover, by Lemma \ref{lem: uniform bound of reg parameter} and Fubini's Theorem
    \begin{align}
        \E \left[ \sum_{i = 1}^{\infty} \rho_i r_i^2 \right] = \sum_{i = 1}^{\infty} \E[\rho_i]r_i^2 \leq C \sum_{i = 1}^{\infty}r_i^2 < \infty.
    \end{align}
    Hence,
    \begin{align}
        \E \left[ \sum_{n = 1}^{\infty} h_n^v(\param_n) \right] \leq \frac{L}{2} \E \left[ \sum_{n = 1}^{\infty} B_n \right] < \infty.
    \end{align}
    It then follows that $\sum_{n = 1}^{\infty} h_n^v(\param_n)$ is finite almost surely.

\end{proof}

\begin{prop}\label{prop: stationary_subsequence}
    Assume Case \ref{case: diminishing_radius-markovian}. Suppose there exists a sequence $(n_k)_{k \geq 1}$ such that almost surely either
    \begin{align}
        \sum_{k = 1}^{\infty} \norm{\param_{n_k+1} - \param_{n_k}} = \infty \quad \text{ or } \quad \liminf_{k \to \infty} \left| \left \langle \nabla \bar{g}_{n_k + 1}(\param_{n_k}), \frac{\param_{n_k+1} - \param_{n_k}}{\norm{\param_{n_k+1} - \param_{n_k}}} \right \rangle \right| = 0.
        \label{eq: prop-stationary_subsequence-conditions}
    \end{align}
    Then there exists a further subsequence $(m_k)_{k \geq 1}$ of $(n_k)_{k \geq 1}$ such that $\param_{\infty}:= \lim_{k \to \infty} \param_{m_k}$ exists almost surely and $\param_{\infty}$ is a stationary point of $f$ over $\Param$. 
\end{prop}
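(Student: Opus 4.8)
The plan is to reduce both disjuncts in \eqref{eq: prop-stationary_subsequence-conditions} to the single statement that the \emph{radial} derivative of $\bar g$ vanishes along a subsequence, and then feed this into the approximate first-order optimality bound of Lemma \ref{lem: surrogate_grad_inf_lowerbound} to show that the full stationarity measure $O_f(\param_{n_k})$ vanishes. Write $u_n := (\param_{n+1}-\param_n)/\norm{\param_{n+1}-\param_n}$ on the (relabelled) indices with nonzero steps. Reindexing Proposition \ref{prop: surogate_gradient_step_sum} gives $\sum_n \norm{\param_{n+1}-\param_n}\,|\langle\nabla\bar g_{n+1}(\param_n),u_n\rangle| = \sum_n|\langle\nabla\bar g_{n+1}(\param_n),\param_{n+1}-\param_n\rangle| < \infty$ almost surely. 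Hence if $\sum_k\norm{\param_{n_k+1}-\param_{n_k}}=\infty$, the coefficients $|\langle\nabla\bar g_{n_k+1}(\param_{n_k}),u_{n_k}\rangle|$ cannot be bounded below by a positive constant, so their $\liminf$ is $0$. Thus in either case of \eqref{eq: prop-stationary_subsequence-conditions} I may pass to a subsequence, still denoted $(n_k)$, along which $|\langle\nabla\bar g_{n_k+1}(\param_{n_k}),u_{n_k}\rangle|\to 0$.

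Next I apply Lemma \ref{lem: surrogate_grad_inf_lowerbound} at index $n_k+1$ and divide by $b_{n_k+1}=\min\{1,r_{n_k+1}\}=r_{n_k+1}$ (valid for large $k$ since $r_n\downarrow 0$). The leading term becomes $r_{n_k+1}^{-1}\langle-\nabla\bar g_{n_k+1}(\param_{n_k}),\param_{n_k+1}-\param_{n_k}\rangle$, which is bounded in absolute value by $(\norm{\param_{n_k+1}-\param_{n_k}}/r_{n_k+1})\,|\langle\nabla\bar g_{n_k+1}(\param_{n_k}),u_{n_k}\rangle|\le|\langle\nabla\bar g_{n_k+1}(\param_{n_k}),u_{n_k}\rangle|\to 0$, using $\norm{\param_{n_k+1}-\param_{n_k}}\le r_{n_k+1}$. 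The second term $\norm{\nabla h_{n_k}^{v_{n_k+1}}(\param_{n_k})}\to0$ because $\sum_n h_n^{v_{n+1}}(\param_n)<\infty$ (Lemma \ref{lem:single gap convergence}) forces $h_n^{v_{n+1}}(\param_n)\to0$, while $\norm{\nabla h_n^{v_{n+1}}(\param_n)}\le\sqrt{2Lh_n^{v_{n+1}}(\param_n)}$ by Lemma \ref{lem: L-smooth_grad_upperbound}; the third term is $2Lr_{n_k+1}\to0$. Hence $\limsup_k\sup_{\param\in\Param,\norm{\param-\param_{n_k}}\le1}\langle-\nabla\bar g_{n_k}(\param_{n_k}),\param-\param_{n_k}\rangle\le0$. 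Since $\bar g_{n_k}=f+\bar h_{n_k}$ and $\norm{\nabla\bar h_{n_k}(\param_{n_k})}\le\sqrt{2L\bar h_{n_k}(\param_{n_k})}\to 0$ by Lemma \ref{lem: gap sum dr}, the same bound holds with $\bar g_{n_k}$ replaced by $f$, giving $O_f(\param_{n_k})\to0$ almost surely.

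Finally, I extract (by Bolzano--Weierstrass, using boundedness of the relevant iterate subsequence, the regime in which limit points exist) a convergent sub-subsequence $\param_{m_k}\to\param_\infty$. To verify stationarity, fix $\param\in\Param$ and $t\in(0,1)$ with $t\norm{\param-\param_\infty}<1$; by convexity $y_k:=(1-t)\param_{m_k}+t\param\in\Param$, and $\norm{y_k-\param_{m_k}}=t\norm{\param-\param_{m_k}}\le1$ for large $k$, so that $t\langle-\nabla f(\param_{m_k}),\param-\param_{m_k}\rangle\le O_f(\param_{m_k})\to0$. Letting $k\to\infty$, using continuity of $\nabla f$ (Assumption \ref{assumption: Lsmooth}), and dividing by $t$ yields $\langle\nabla f(\param_\infty),\param-\param_\infty\rangle\ge0$ for every $\param\in\Param$, i.e. $O_f(\param_\infty)=0$, so $\param_\infty$ is stationary.

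I expect the main obstacle to be the division by the vanishing step scale $b_{n_k+1}=r_{n_k+1}$ in Lemma \ref{lem: surrogate_grad_inf_lowerbound}: the argument succeeds only because hypothesis \eqref{eq: prop-stationary_subsequence-conditions}, after the reduction through Proposition \ref{prop: surogate_gradient_step_sum}, forces the numerator $\langle-\nabla\bar g_{n_k+1}(\param_{n_k}),\param_{n_k+1}-\param_{n_k}\rangle$ to decay at least as fast as $r_{n_k+1}$, so the quotient stays controlled by the vanishing radial derivative. A secondary subtlety is transferring $O_f(\param_{n_k})\to0$ to genuine stationarity of $\param_\infty$ under the moving feasibility constraint, which the convexity/homogeneity construction above resolves without requiring any semicontinuity of $O_f$.
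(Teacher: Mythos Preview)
Your proof is correct and uses the same ingredients as the paper's (Proposition \ref{prop: surogate_gradient_step_sum} for the reduction, Lemma \ref{lem: surrogate_grad_inf_lowerbound} for the core estimate, Lemmas \ref{lem:single gap convergence} and \ref{lem: L-smooth_grad_upperbound} for the $h_{n}^{v_{n+1}}$ term, and Lemma \ref{lem: gap sum dr} for the $\bar h_n$ term). The organization differs: the paper argues by contradiction, fixing a direction $\param^\star$ at which stationarity fails and showing via Lemma \ref{lem: surrogate_grad_inf_lowerbound} that the radial derivative along $(m_k)$ would then stay bounded below by $\varepsilon/2$, contradicting its vanishing; you instead divide Lemma \ref{lem: surrogate_grad_inf_lowerbound} through by $b_{n_k+1}$ and show directly that $O_f(\param_{n_k})\to 0$, then transfer this to $\param_\infty$ by continuity. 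Your direct route is arguably cleaner and makes the role of the hypothesis more transparent. Both proofs leave the existence of a convergent subsequence implicit; the paper simply writes ``If $\param_\infty$ is not a stationary point'' without justifying that the limit exists, while you invoke Bolzano--Weierstrass with the same unstated boundedness premise. In the paper this is supplied at the call sites (Proposition \ref{prop: non_stationary_nhood} and the proof of Theorem \ref{thm: a.s. convergence} \ref{item: stationary_limit_points}), where the relevant iterates lie in a fixed ball, so your parenthetical caveat is appropriate.
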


\begin{proof}
    By Proposition \ref{prop: surogate_gradient_step_sum},
    \begin{align}
        \sum_{k = 1}^{\infty} \norm{\param_{n_k + 1} - \param_{n_k}} \left| \left \langle \nabla \bar{g}_{n_k + 1}(\param_{n_k}), \frac{\param_{n_k+1} - \param_{n_k}}{\norm{\param_{n_k+1} - \param_{n_k}}} \right \rangle \right| < \infty \quad \text{ a.s. }
    \end{align}
    Therefore, the former condition implies the latter almost surely. So, it suffices to show the the latter condition implies the assertion. Assume the latter condition in \eqref{eq: prop-stationary_subsequence-conditions} and let $(m_k)_{k \geq 1}$ be a subsequence of $(n_k)_{k \geq 1}$, satisfying
    \begin{align}
        \lim_{k \to \infty}\left| \left \langle \nabla \bar{g}_{m_k + 1}(\param_{m_k}), \frac{\param_{m_k+1} - \param_{m_k}}{\norm{\param_{m_k+1} - \param_{m_k}}} \right \rangle \right| = 0.
    \end{align}
    Since $\norm{\param_{m_k + 1} - \param_{m_k}} \leq r_{m_k}$, it follows that
    \begin{align}
        \lim_{k \to \infty} \frac{\norm{\param_{m_{k+1}} - \param_{m_k}}}{b_{m_k + 1}} \left| \left \langle \nabla \bar{g}_{m_k+1}(\param_{m_k}), \frac{\param_{m_k+1} - \param_{m_k}}{\norm{\param_{m_k+1} - \param_{m_k}}} \right \rangle \right| = 0.
        \label{eq: prop-stationary_subsequnce-first order conv}
    \end{align} 
    where $b_n = \min\{1, r_n\}$.
    If $\param_{\infty}$ is not a stationary point of $f$ over $\Param$, then we may find $\param^{\star} \in \Param$ with $\norm{\param^{\star} - \param_{\infty}} \leq 1$ and $\eps > 0$ so that 
    \begin{align}
        \langle \nabla f(\param_{\infty}), \param^{\star}  - \param_{\infty} \rangle \leq -\eps < 0.
    \end{align}
    On the other hand by the triangle inequality and Cauchy-Schwartz
    \begin{align}
        &|\langle \nabla \bar{g}_{m_k}(\param_{m_k}), \param^{\star} - \param_{m_k} \rangle - \langle \nabla f(\param_{\infty}), \param^{\star} - \param_{\infty} \rangle|\\
        &= |\langle \nabla \bar{g}_{m_k}(\param_{m_k}) - \nabla f(\param_{m_k}), \param^{\star} - \param_{m_k} \rangle + \langle \nabla f(\param_{m_k}) - \nabla f(\param_{\infty}), \param^{\star} - \param_{m_k}\rangle + \langle \nabla f(\param_{\infty}), \param_{\infty} - \param_{m_k} \rangle |\\
        & \leq \norm{\nabla \bar{h}_{m_k}(\param_{m_k})}\norm{\param^{\star} - \param_{m_k}} + \norm{\nabla f(\param_{m_k}) - \nabla f(\param_{\infty})}\norm{\param^{\star} - \param_{\infty}} + \norm{\nabla f(\param_{\infty})}\norm{\param_{\infty} - \param_{m_k}}
    \end{align}
    Since $(\param_{m_k})_{k \geq 1}$ converges, $\sup_{k} \norm{\param^{\star} - \param_{m_k}} \leq M$ for some $M < \infty$. Furthermore, 
    \begin{align}
        \sum_{n = 1}^{\infty} \norm{\nabla \bar{h}_n(\param_n)}^2 \leq 2L \sum_{n = 1}^{\infty} \bar{h}_n(\param_n) < \infty
    \end{align}
    by Lemmas \ref{lem: gap sum dr} and \ref{lem: L-smooth_grad_upperbound} so
    $\norm{\nabla \bar{h}_{m_k}(\param_{m_k})} \to 0$ almost surely as $k \to \infty$. This, together with continuity of $\nabla f$ and $\param_{m_k} \to \param_{\infty}$, shows that right hand side above tends to zero as $k \to \infty$. Then we can choose $K$ sufficiently large so that 
    \begin{align}
        \langle \nabla \bar{g}_{m_k}(\param_{m_k}), \param^{\star} - \param_{m_k} \rangle \leq -\frac{\eps}{2}
    \end{align}
    for $k \geq K$.
     Recall that $\norm{\param_n - \param_{n-1}} \leq r_n$ and $r_n = o(1)$. Applying Lemma  \ref{lem: surrogate_grad_inf_lowerbound} we get 
    \begin{multline}
        \frac{\norm{\param_n - \param_{n-1}}}{b_n}\left \langle \nabla \bar{g}_n(\param_{n-1}), \frac{\param_n - \param_{n-1}}{\norm{\param_n - \param_{n-1}}} \right \rangle\\
        \leq \inf_{\param \in \Param, \norm{\param - \param_{n-1}} \leq 1} \left \langle \nabla \bar{g}_{n-1}(\param_{n-1}), \param - \param_{n-1}\right \rangle + \norm{\nabla h_{n-1}^{v_n}(\param_{n-1})} + Lr_n.
    \end{multline}
    It then follows that for sufficiently large $k$ 
    \begin{align}
        &\frac{\norm{\param_{m_k + 1} - \param_{m_k}}}{b_{m_k+1}}\left \langle \nabla \bar{g}_{m_k+1}(\param_{m_k}), \frac{\param_{m_k + 1} - \param_{m_k}}{\norm{\param_{m_k + 1} - \param_{m_k}}} \right \rangle\\
        &\leq \inf_{\param \in \Param, \norm{\param - \param_{m_k}} \leq 1} \left \langle \nabla \bar{g}_{m_k}(\param_{m_k}), \param - \param_{m_k}\right \rangle + \norm{\nabla h_{m_k}^{v_{m_k+1}}(\param_{m_k})} + Lr_{m_k} \\
        & \leq  \left \langle \nabla \bar{g}_{m_k}(\param_{m_k}), \param^{\star} - \param_{m_k} \right \rangle  + \norm{\nabla h_{m_k}^{v_{m_k+1}}(\param_{m_k})} + r_{m_k}\\
        & \leq -\frac{\eps}{2} + \norm{\nabla h_{m_k}^{v_{m_k + 1}}(\param_{m_k}} + r_{m_k}
    \end{align}
    Recall Lemma \ref{lem:single gap convergence} which shows 
    \begin{align}
         \sum_{n=1}^{\infty} h_n^{v_{n+1}}(\param_n) < \infty
    \end{align}
    almost surely.
    Therefore, since $h_n^{v_n}$ is non-negative, $\sqrt{h_n^{v_{n+1}}(\param_n)} \to 0$ almost surely as $n \to \infty$. Moreover, by Lemma \ref{lem: L-smooth_grad_upperbound}, $\norm{\nabla h_n^{v_{n+1}}(\param_n)} \leq \sqrt{2Lh_n^{v_{n+1}}(\param_n)}$. So letting $k \to \infty$ shows
    \begin{align}
        \limsup_{k \to \infty} \frac{\norm{\param_{m_k + 1} - \param_{m_k}}}{b_{m_k + 1}} \left \langle \nabla \bar{g}_{m_k}(\param_{m_k+1}), \frac{\param_{m_k + 1} - \param_{m_k}}{\norm{\param_{m_k+1} - \param_{m_k}} } \right \rangle \leq -\frac{\eps}{2}
    \end{align}
    contradicting \eqref{eq: prop-stationary_subsequnce-first order conv}. 
\end{proof}

Recall that under Algorithm \ref{RMISO_DR}, the one step parameter difference $\norm{\param_n - \param_{n-1}}$ is at most $r_n$. For each $n \geq 1$ we say that $\param_n$ is a \emph{long point} if $\norm{\param_n - \param_{n-1}} < r_n$ and a \emph{short point} if $\norm{\param_n - \param_{n-1}} = r_n$. The next proposition shows that if $\param_n$ is a long point, then $\param_n$ is obtained by directly minimizing $\bar{g}_n$ over the full parameter space $\Param$. It is here the we crucially use the convexity of $\bar{g}_n$ from Definition \ref{def: surrogates}. 

\begin{prop}\label{prop: long_point_global_min}
    For $n \geq 1$, suppose that $\param_n \in \argmin_{\param \in \Param \cap B_{r_n}(\param_{n-1})} \bar{g}_n(\param)$ and that $\norm{\param_n - \param_{n-1}} < r_n$. Then $\param_n \in \argmin_{\param \in \Param} \bar{g}_n(\param)$.  
\end{prop}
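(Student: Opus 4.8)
The plan is to argue by contradiction, using only the convexity of $\bar{g}_n$ together with the convexity of $\Param$. Note first that $\bar{g}_n$ is convex: each surrogate $g_n^v$ is convex by Definition \ref{def: surrogates}, and $\bar{g}_n = \sum_{v \in \mathcal{V}} g_n^v \pi(v)$ is a nonnegative combination of convex functions. Suppose, toward a contradiction, that $\param_n$ is \emph{not} a global minimizer of $\bar{g}_n$ over $\Param$. Then there exists $\param^\star \in \Param$ with $\bar{g}_n(\param^\star) < \bar{g}_n(\param_n)$.

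First I would exploit convexity of the feasible set to produce a descent direction that stays in $\Param$. Since $\Param$ is convex, the entire segment $\param_t := (1-t)\param_n + t\param^\star$ lies in $\Param$ for all $t \in [0,1]$, and convexity of $\bar{g}_n$ gives $\bar{g}_n(\param_t) \le (1-t)\bar{g}_n(\param_n) + t\,\bar{g}_n(\param^\star) < \bar{g}_n(\param_n)$ for every $t \in (0,1]$. Next I would invoke the strict interior hypothesis $\norm{\param_n - \param_{n-1}} < r_n$ to show that a sufficiently short step along this segment remains feasible for the trust-region subproblem. By continuity of the norm, $\norm{\param_t - \param_{n-1}} \to \norm{\param_n - \param_{n-1}} < r_n$ as $t \to 0^+$, so there is some $t_0 \in (0,1]$ with $\param_{t_0} \in B_{r_n}(\param_{n-1})$, and hence $\param_{t_0} \in \Param \cap B_{r_n}(\param_{n-1})$. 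But then $\bar{g}_n(\param_{t_0}) < \bar{g}_n(\param_n)$ contradicts the assumption that $\param_n$ minimizes $\bar{g}_n$ over $\Param \cap B_{r_n}(\param_{n-1})$. This contradiction establishes the claim.

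This is essentially the elementary fact that an interior local minimizer of a convex function over a convex set is in fact a global minimizer, so I do not expect any genuine obstacle. The only point requiring care is the continuity step guaranteeing that the perturbed iterate $\param_{t_0}$ stays \emph{strictly} inside the ball $B_{r_n}(\param_{n-1})$; this is immediate precisely because the hypothesis gives the strict inequality $\norm{\param_n - \param_{n-1}} < r_n$, so the ball constraint is inactive at $\param_n$ and can be ignored in a neighborhood. It is worth emphasizing that convexity of $\bar{g}_n$ (rather than mere convexity of the surrogate error, as needed elsewhere) is exactly what makes this global conclusion available, which is why Algorithm \ref{RMISO_DR} only requires surrogates in $\mathcal{S}_L(f^v,\param)$ and why this proposition underlies the stationarity argument in the proof of Theorem \ref{thm: a.s. convergence}\ref{item: stationary_limit_points}.
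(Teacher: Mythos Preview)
Your proof is correct and takes essentially the same approach as the paper: both argue by contradiction, moving along the segment from $\param_n$ toward a better point in $\Param$ and using the strict inequality $\norm{\param_n - \param_{n-1}} < r_n$ to keep the perturbed iterate inside the trust region. The only cosmetic difference is that the paper phrases the contradiction via first-order optimality (assuming $\langle \nabla \bar{g}_n(\param_n), \param^\star - \param_n \rangle < 0$ and constructing an explicit interior point along that direction), whereas you work directly with function values and a continuity argument; your version is marginally more elementary in that it never invokes differentiability of $\bar{g}_n$, though this makes no practical difference under Assumption~\ref{assumption: Lsmooth}.
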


\begin{proof}
    By Definition \ref{def: surrogates}, $\bar{g}_n$ is convex. Thus, it suffices to verify the first order stationarity condition
    \begin{align}
        \inf_{\param \in \Param} \langle \nabla \bar{g}_n(\param_n), \param - \param_n \rangle \geq 0
    \end{align}
     to conclude $\param_n \in \argmin_{\param \in \Param} \bar{g}_n (\param)$. To this end, assume the conclusion is false. Then there is $\param^{\star} \in \Param$ with $\langle \nabla \bar{g}_n(\param_n), \param^{\star} - \param_n \rangle < 0$. Moreover, as $\param_n$ is obtained by minimizing $\bar{g}_n$ over $\Param \cap B_{r_n}(\param_{n-1})$ we must have $\norm{\param^{\star} - \param_{n-1}} > r_n$. As we are assuming $\norm{\param_n - \param_{n-1}} < r_n$, there is $\alpha \in (0, 1)$ so that $\norm{\param_n - \param_{n-1}} = \alpha r_n$. Notice that 
    \begin{align}
        \norm{\param^{\star} - \param_n} \geq \norm{\param^{\star} - \param_{n-1}} - \norm{\param_{n} - \param_{n-1}} > (1 - \alpha)r_n. 
    \end{align}
    Hence if we set $a = \frac{(1 - \alpha)r_n}{\norm{\param^{\star} - \param_n}}$ then $a \in (0, 1)$. So, the convexity of $\Param$ implies that $\tilde{\param} := a(\param^{\star} - \param_n) + \param_n \in \Param$. Furthermore, 
    \begin{align}
        \norm{\tilde{\param} - \param_{n-1}} \leq a\norm{\param^{\star} - \param_n} + \norm{\param_n - \param_{n-1}} = (1 - \alpha)r_n + \alpha r_n = r_n
    \end{align}
    and 
    \begin{align}
        \langle \nabla \bar{g}_n(\param_n), \tilde{\param} - \param_n \rangle = a \langle \nabla \bar{g}_n(\param_n), \param^{\star} - \param_n \rangle < 0.
    \end{align}
    This contradicts $\param_n \in \argmin_{\param \in \Param \cap B_{r_n}(\param_{n-1})} \bar{g}_n(\param)$ and completes the proof. 
\end{proof}

\begin{prop}\label{prop: long_point_limit_stationary}
    Assume the Case \ref{case: diminishing_radius-markovian}. If $(\param_{n_k})_{k \geq 1}$ is a sequence consisting of long points such that $\param_{\infty} := \lim_{k \to \infty} \param_{n_k}$ exist almost surely, then $\param_{\infty}$ is a stationary point of $f$ over $\Param$.
\end{prop}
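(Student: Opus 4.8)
The plan is to exploit the key structural fact, established in Proposition \ref{prop: long_point_global_min}, that at a long point the trust-region constraint is inactive, so that each $\param_{n_k}$ in fact minimizes the \emph{full} averaged surrogate $\bar{g}_{n_k}$ over all of $\Param$, not merely over the ball $B_{r_{n_k}}(\param_{n_k - 1})$. Since $\bar{g}_{n_k}$ is convex by Definition \ref{def: surrogates}, this global minimality is equivalent to the first-order optimality condition $\langle \nabla \bar{g}_{n_k}(\param_{n_k}), \param - \param_{n_k} \rangle \geq 0$ for every $\param \in \Param$. This is the one place where both the convexity of the surrogates and the long-point property are used, and it is the crux of the argument.

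Next I would convert this surrogate-level stationarity into objective-level stationarity by controlling the approximation error. Writing $\nabla \bar{g}_n = \nabla f + \nabla \bar{h}_n$ (valid since $\bar{g}_n = f + \bar{h}_n$ and, under Case \ref{case: diminishing_radius-markovian}, every function involved is differentiable by Assumption \ref{assumption: Lsmooth}), for any fixed $\param \in \Param$ the first-order condition gives
\[
\langle \nabla f(\param_{n_k}), \param - \param_{n_k} \rangle \geq -\langle \nabla \bar{h}_{n_k}(\param_{n_k}), \param - \param_{n_k} \rangle \geq -\norm{\nabla \bar{h}_{n_k}(\param_{n_k})}\,\norm{\param - \param_{n_k}},
\]
using Cauchy--Schwarz in the last step.

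It remains to show the error term vanishes along the subsequence. By Lemma \ref{lem: gap sum dr} we have $\sum_{n} \bar{h}_n(\param_n) < \infty$ almost surely, and since each $\bar{h}_n$ is non-negative with $L$-Lipschitz gradient, Lemma \ref{lem: L-smooth_grad_upperbound} yields $\norm{\nabla \bar{h}_n(\param_n)}^2 \leq 2L\bar{h}_n(\param_n)$; hence $\sum_n \norm{\nabla \bar{h}_n(\param_n)}^2 < \infty$ and in particular $\norm{\nabla \bar{h}_{n_k}(\param_{n_k})} \to 0$ almost surely. Letting $k \to \infty$ in the displayed inequality, and using that $\param_{n_k} \to \param_\infty$ (so $\norm{\param - \param_{n_k}}$ remains bounded and, by continuity of $\nabla f$, $\nabla f(\param_{n_k}) \to \nabla f(\param_\infty)$), the right-hand side tends to zero while the left-hand side tends to $\langle \nabla f(\param_\infty), \param - \param_\infty \rangle$. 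This gives $\langle \nabla f(\param_\infty), \param - \param_\infty \rangle \geq 0$ for every $\param \in \Param$, which is exactly the condition $O_f(\param_\infty) = 0$; that is, $\param_\infty$ is stationary for $f$ over $\Param$, almost surely on the event that $\param_\infty$ exists.

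I expect the only genuine subtlety to be the first paragraph: recognizing that Proposition \ref{prop: long_point_global_min} is precisely what upgrades minimization over the shrinking trust region to a bona fide global first-order condition on $\Param$, which is what lets the limiting argument run without any further control on the radii at the indices $n_k$. Everything downstream is a routine limit, the only care needed being to hold $\param$ fixed while passing to the limit so that $\norm{\param - \param_{n_k}}$ stays bounded and the error term is annihilated.
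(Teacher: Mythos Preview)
Your proposal is correct and follows essentially the same approach as the paper: invoke Proposition \ref{prop: long_point_global_min} to get the first-order condition $\langle \nabla \bar{g}_{n_k}(\param_{n_k}), \param - \param_{n_k} \rangle \geq 0$, split off the error term $\nabla \bar{h}_{n_k}$, and use Lemmas \ref{lem: gap sum dr} and \ref{lem: L-smooth_grad_upperbound} to kill it in the limit. If anything, you are slightly more careful than the paper in explicitly noting the boundedness of $\norm{\param - \param_{n_k}}$ and the continuity of $\nabla f$ needed to pass to the limit.
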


\begin{proof}
    By the assumption that $\param_{n_k}$ is a long point and Proposition \ref{prop: long_point_global_min} we have $\param_{n_k} \in \argmin_{\param \in \Param} \bar{g}_{n_k}(\param)$. Therefore, for any $\param \in \Param$,
    \begin{align}
        \langle \nabla \bar{g}_{n_k}(\param_{n_k}), \param - \param_{n_k} \rangle \geq 0.
    \end{align}
    We then notice that
    \begin{align}
        \langle \nabla f(\param_{n_k}), \param - \param_{m_k} \rangle = \langle \nabla \bar{g}_{n_k}(\param_{n_k}), \param - \param_{n_k} \rangle - \langle \nabla \bar{h}_{n_k}(\param_{n_k}), \param - \param_{n_k} \rangle. 
    \end{align}
    By Lemmas \ref{lem: L-smooth_grad_upperbound} and \ref{lem: gap sum dr}, $\norm{\nabla \bar{h}_{n_k}(\param_{n_k})}^2 \leq 2L\bar{h}_{n_k}(\param_{n_k}) \to 0$ almost surely as $k \to \infty$. Therefore, by taking limits we get
    \begin{align}
        \langle \nabla f(\param_{\infty}), \param - \param_{\infty} \rangle \geq 0.
    \end{align}
    Since this holds for all $\param \in \Param$, 
    \begin{align}
        \sup_{\param \in \Param, \norm{\param - \param_{\infty}} \leq 1} \langle -\nabla f(\param_{\infty}), \param - \param_{\infty} \rangle \leq 0
    \end{align}
    which means that $\param_{\infty}$ is a stationary point of $f$ over $\Param$. 
\end{proof}

\begin{prop}\label{prop: non_stationary_nhood}
Suppose there exists a sub-sequence $(\param_{n_k})_{k \geq 1}$ such that $\lim_{k \to \infty} \param_{n_k} = \param_{\infty}$ exists almost surely and that $\param_{\infty}$ is not a stationary point of $f$ over $\Param$. Then there is $\eps > 0$ such that the $\eps$-neighborhood $B_{\eps}(\param_{\infty})$ has the following properties:
\begin{itemize}
    \item[(a)] $B_{\eps}(\param_{\infty})$ does not contain any stationary points of $f$ over $\Param$. 
    \item[(b)] There are infinitely many $n$ for which $\param_n$ is outside of $B_{\eps}(\param_{\infty})$. 
\end{itemize}
\end{prop}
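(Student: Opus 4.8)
The plan is to treat the two assertions separately, using the continuity of $\nabla f$ (which holds in Case \ref{case: diminishing_radius-markovian} by Assumption \ref{assumption: Lsmooth} and Proposition \ref{prop: lipshitz surrogate}) for part (a), and a dichotomy on the total length of the iterate path for part (b).

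For (a), I would first record that since $f$ is differentiable here, the optimality measure is $O_f(\param) = \sup_{\param' \in \Param,\, \norm{\param' - \param} \le 1} \langle -\nabla f(\param), \param' - \param \rangle$, and that $\param_\infty$ not being stationary means $O_f(\param_\infty) > 0$. The key step is to show $O_f$ is lower semicontinuous at $\param_\infty$. Given any $\param \to \param_\infty$ and a near-optimal competitor $\param^\star \in \Param$ for $O_f(\param_\infty)$ with $\norm{\param^\star - \param_\infty} \le 1$, I would transport $\param^\star$ to a feasible competitor $\tilde\param = \param + c(\param^\star - \param)$ for $O_f(\param)$, with $c = \min\{1, 1/\norm{\param^\star - \param}\} \in (0,1]$: convexity of $\Param$ gives $\tilde\param \in \Param$, the choice of $c$ gives $\norm{\tilde\param - \param} \le 1$, and $c \to 1$ as $\param \to \param_\infty$ since $\norm{\param^\star - \param_\infty} \le 1$. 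Continuity of $\nabla f$ then yields $\liminf_{\param \to \param_\infty} O_f(\param) \ge O_f(\param_\infty) > 0$. Consequently the set $\{O_f > O_f(\param_\infty)/2\}$ contains a ball $B_{\eps_0}(\param_\infty)$, and fixing $\eps := \eps_0/2$ guarantees that even $\overline{B_\eps(\param_\infty)}$ contains no stationary point; this gives (a) and also supplies the stationary-point-free closed ball that (b) will invoke.

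For (b), I would argue by contradiction: suppose $\param_n \in B_\eps(\param_\infty)$ for all $n \ge N_0$, so the tail is confined to a bounded set. I split on whether the total path length $\sum_{n \ge N_0} \norm{\param_n - \param_{n-1}}$ is infinite or finite. If it is infinite, applying Proposition \ref{prop: stationary_subsequence} to the index sequence $(n)_{n \ge N_0}$ (whose first alternative then holds) produces a convergent further subsequence, which is bounded by confinement, whose limit lies in $\overline{B_\eps(\param_\infty)}$ and is stationary, contradicting (a). If the total length is finite, then $(\param_n)$ is Cauchy and, since $\param_{n_k} \to \param_\infty$, the whole sequence converges to $\param_\infty$. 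I then examine the long/short dichotomy: if there are infinitely many long points they form a subsequence converging to $\param_\infty$, so Proposition \ref{prop: long_point_limit_stationary} forces $\param_\infty$ to be stationary, a contradiction; if there are only finitely many long points then eventually $\norm{\param_n - \param_{n-1}} = r_n$, whence $\sum_n \norm{\param_n - \param_{n-1}} = \sum_n r_n = \infty$ by Assumption \ref{assumption: sequences}, contradicting finiteness of the path length. Every branch ends in a contradiction, so $\param_n$ must lie outside $B_\eps(\param_\infty)$ infinitely often.

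The main obstacle I anticipate is the lower-semicontinuity step in (a): the unit-ball constraint $\norm{\param' - \param} \le 1$ and feasibility $\param' \in \Param$ must be maintained simultaneously as the base point moves, which is precisely why the rescaling factor $c = \min\{1, 1/\norm{\param^\star - \param}\}$ is needed in place of a naive translation of $\param^\star$. A secondary point requiring care is ensuring that the further subsequence extracted through Proposition \ref{prop: stationary_subsequence} is bounded, which is guaranteed here only because of the standing confinement assumption, so that its limit exists and lands inside the stationary-point-free closed ball.
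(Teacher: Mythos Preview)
Your argument is correct. The route differs from the paper's in two places worth noting. For (a), the paper argues indirectly: if every ball around $\param_\infty$ contained a stationary point, a sequence of stationary points would converge to $\param_\infty$, and continuity of $\nabla f$ would force $\param_\infty$ itself to be stationary. Your direct lower-semicontinuity argument for $O_f$ via the rescaled competitor $\tilde\param = \param + c(\param^\star - \param)$ is a clean alternative that gives a quantitative bound $O_f > O_f(\param_\infty)/2$ on the ball. For (b), the paper first proves an auxiliary fact you do not use: some ball around $\param_\infty$ contains \emph{no long points} (otherwise a sequence of long points would converge to $\param_\infty$ and Proposition~\ref{prop: long_point_limit_stationary} would make it stationary). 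With this in hand, confinement to the ball immediately forces every iterate to be short, so $\sum_n \norm{\param_n - \param_{n-1}} = \sum_n r_n = \infty$, and a single appeal to Proposition~\ref{prop: stationary_subsequence} closes the argument. Your version instead runs a two-level dichotomy (finite versus infinite path length, then infinitely many versus finitely many long points), which ultimately reproduces the same contradictions but at the cost of one extra case. The paper's ``no long points in a ball'' observation is what collapses your case analysis; conversely, your approach avoids stating that lemma separately.
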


\begin{proof}
    We first show that there exists $\eps > 0$ so that $B_{\eps}(\param_{\infty})$ does not contain any long points. Suppose for contradiction that for each $\eps > 0$, there is a long point in $B_{\eps}(\param_{\infty})$. Then one may construct a sequence of long points converging to $\param_{\infty}$. But then by Proposition \ref{prop: long_point_limit_stationary}, $\param_{\infty}$ is a stationary point for $f$ over $\Param$, a contradiction. 
    
    Next we show that there exists $\eps$ so that $B_{\eps}(\param_{\infty})$ satisfies (a). In fact, suppose not. Then we can find a sequence of stationary points $(\param_{\infty, k})_{k \geq 1}$ converging to $\param_{\infty}$. But then by continuity of $\nabla f$,
    \begin{align}
        \langle \nabla f(\param_{\infty}), \param - \param_{\infty} \rangle = \lim_{k \to \infty} \langle \nabla f(\param_{k, \infty}), \param - \param_{k, \infty} \rangle \geq 0
    \end{align}
    for any $\param \in \Param$. Then $\param_{\infty}$ is a stationary point of $f$ over $\Param$, contradicting our assumptions. 

    Now let $\eps > 0$ be such that $B_{\eps}(\param_{\infty})$ does not contain any long points and satisfies (a). We will show that $B_{\eps/2}(\param_{\infty})$ satisfies (b) and thus $B_{\eps/2}(\param_{\infty})$ satisfies both (a) and (b) as desired. Aiming for a contradiction, suppose there are only finitely many $n$ for which $\param_{n}$ is outside $B_{\eps/2}(\param_{\infty})$. Then there exists $N$ so that $\param_{n} \in B_{\eps/2}(\param_{\infty})$ for all $n \geq N$. Then $\param_n$ is a short point for each $n \geq N$ so $\norm{\param_n - \param_{n-1}} = r_n$ for all $n \geq N$. This, in turn, implies that $\sum_{n = 1}^{\infty} \norm{\param_n - \param_{n-1}} = \infty$. By Proposition \ref{prop: stationary_subsequence}, there exists a subsequence $(\param_{n_k})_{k \geq 1}$ such that $\param_{\infty}' = \lim_{k \to \infty} \param_{n_k}$ exists and is stationary for $f$. But since $\param_{\infty}' \in B_{\eps}(\param_{\infty})$, this contradicts (a). The proof is complete.
\end{proof}

We now prove Theorem \ref{thm: a.s. convergence} \ref{item: stationary_limit_points}.

\begin{proof}[\textbf{Proof of Theorem \ref{thm: a.s. convergence} \ref{item: stationary_limit_points}}]

Suppose for contradiction that there exists a non-stationary limit point $\param_{\infty}$ of $(\param_n)_{n \geq 0}$. By Proposition \ref{prop: non_stationary_nhood}, there is $\eps > 0$ so that $B_{\eps}(\param_{\infty})$ satisfies the conditions (a) and (b). Choose $N$ large enough so that $r_n \leq \frac{\eps}{4}$ for $n \geq N$. We call an integer interval $I := [\ell, \ell')$ a \emph{crossing} if $\param_{\ell} \in B_{\eps/3}(\param_{\infty})$, $\param_{\ell'} \notin B_{2\eps/3}(\param_{\infty})$, and no proper subset of $I$ satisfies both of these conditions. By definition, two distinct crossings have empty intersection. Fix a crossing $I = [\ell, \ell')$. It follows by the triangle inequality,
\begin{align}
    \sum_{n = \ell}^{\ell'-1} \norm{\param_{n+1} - \param_n} \geq \norm{\param_{\ell'} - \param_{\ell}} \geq \eps/3.
    \label{eqn: sum_over_crossing}
\end{align}
Note that since $\param_{\infty}$ is a limit point of $(\param_n)_{n \geq 0}$, we have $\param_n \in B_{\eps/3}(\param_{\infty})$ infinitely often. In addition, by condition (b) of Proposition \ref{prop: non_stationary_nhood}, $\param_n$ also exits $B_{\eps}(\param_{\infty})$ infinitely often. Therefore, there must be infinitely many crossings. Let $n_k$ be the $k$-th smallest integer that appears in some crossing, noting importantly that $\param_{n_k} \in B_{2\eps/3}$ for $k \geq 1$. Then $n_k \to \infty$ as $k \to \infty$ and by \eqref{eqn: sum_over_crossing},
\begin{align}
    \sum_{k = 1}^{\infty} \norm{\param_{n_k+1} - \param_{n_k}} \geq (\# \text{ of crossings} ) \frac{\eps}{3} = \infty.
\end{align}
Then by Proposition \ref{prop: stationary_subsequence}, there is a further subsequence $(\param_{m_k})_{k \geq 1}$ of $(\param_{n_k})_{k \geq 1}$ so that $\param_{\infty}' = \lim_{k \to \infty} \param_{m_k}$ exists and is stationary. However, since $\param_{n_k} \in B_{2\eps/3}(\param_{\infty})$ the stationary point $\param_{\infty}'$ is in $B_{\eps}(\param_{\infty})$. This contradicts property (a) of Proposition \ref{prop: non_stationary_nhood} which shows the assertion.
\end{proof}

\subsection{Details for numerical experiments}\label{sec: experiment details}

\subsubsection{Distributed Nonnegative Matrix Factorization}\label{sec: nmf experiments-details}

The MNIST samples $X_v$ at each node were formed by concatenating a collection of images $\{X_i\}_{i = 1}^k \subset \R_{+}^{28 \times 28}$ along the horizontal axis so that $X_v \in \R_{+}^{28 \times 28k}$. We selected 5000 images from the full dataset at random and divided them into groups based on class label. New nodes were formed by adding batches of 100 images from each group until fewer than 100 images remained. Then a final node was added for the remaining images.   

We include here a list of hyperparamters used for the NMF experiments.

For AdaGrad we used constant step size parameter $\eta=0.5$. For both RMISO-DPR and RMISO-CPR we set $\rho=2500$ for the random walk and $\rho=50$ for cyclic sampling. For the diminishing radius version RMISO-DR we set $r_n = \frac{1}{\sqrt{n}\log(n+1)}$. 

Figure \ref{subfig: nmf time} displays the results of these experiments vs compute time.

\begin{figure}[h]
\centering
        \begin{subfigure}{0.4\textwidth}
        \includegraphics[width=\textwidth]{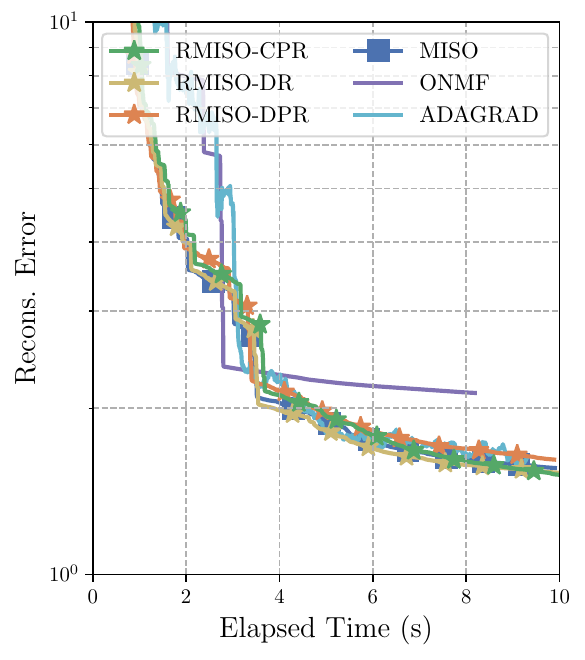}
        \caption{Random Walk}
        \end{subfigure}
        \begin{subfigure}{0.4\textwidth}
            \includegraphics[width=\textwidth]{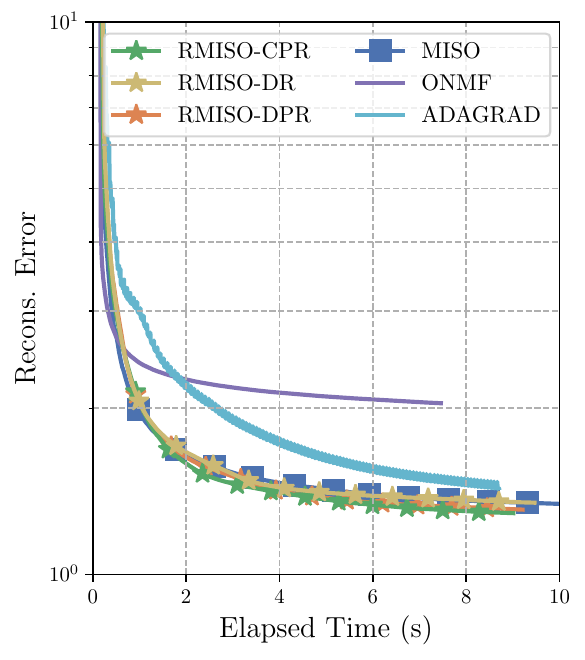}
            \caption{Cyclic}
        \end{subfigure}
        \caption{Plot of reconstruction error against compute time for NMF using two sampling algorithms. Results show the performance of algorithms RMISO, MISO (Algorithm \ref{RMISO} with $\rho_n = 0$), ONMF, and AdaGrad in factorizing a collection of MNIST \cite{mnist} data matrices. }
        \label{subfig: nmf time}
\end{figure}

\subsubsection{Logistic Regression with nonconvex regularization}
\label{sec: binary classificatin experiment-details}

The hyperparameters for the logistic regression experiments were chosen as follows. For MCSAG and RMISO/MISO we took $L = 2/5$. The random walk on the complete graph has $t_{\text{hit}} = O(|\mathcal{V}|)$ while $t_{\text{hit}} = O(|\mathcal{V}|^2)$ for the lonely graph but $t_{\odot} = O(|\mathcal{V}|)$ for both. Accordingly for MCSAG we set the hitting time parameter in the step size $t_{\text{hit}} = 50$ for the complete graph and $t_{\text{hit}} = 2500$ for the lonely graph. For RMISO we set $\rho = 50$ for both the constant proximal regularization version and the dynamic proximal regularization version. We ran SGD with a decaying step size of the form $\alpha_n = \frac{\alpha}{n^{\gamma}}$ where $\alpha = 0.1$ and $\gamma = 0.5$. For SGD-HB and AdaGrad we used step sizes $\alpha=0.05$ and SGD-HB momentum parameter $\beta = 0.9$.   

Figure \ref{fig: binary classification time} shows the results of our experiments plotted vs compute time. 

\begin{figure}[h]
\centering
        \begin{subfigure}{0.4\textwidth}
        \includegraphics[width=\textwidth]{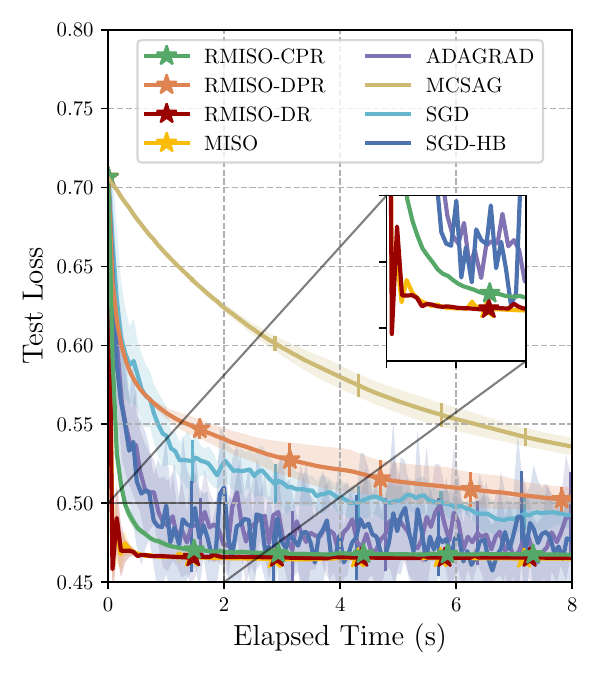}
        \caption{Lonely graph}
        \end{subfigure}
        \begin{subfigure}{0.4\textwidth}
            \includegraphics[width=\textwidth]{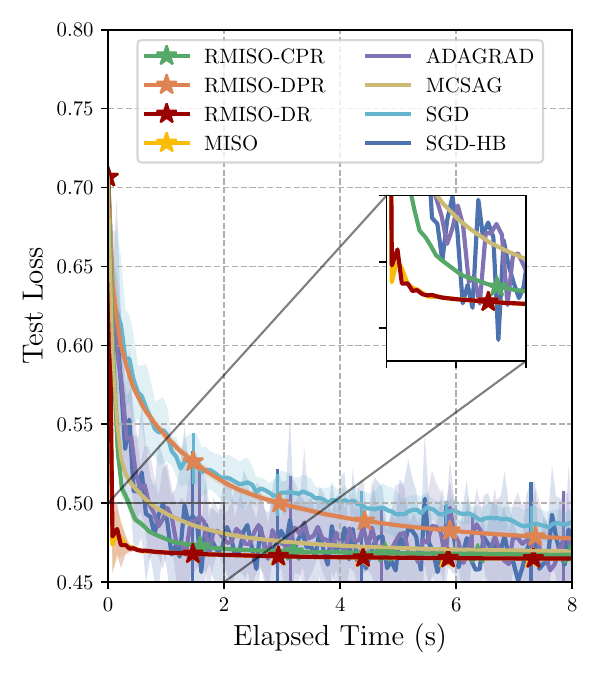}
            \caption{Complete graph}
        \end{subfigure}
    \caption{Plot of objective loss and standard deviation vs compute time for \texttt{a9a} for two graph topologies and various optimization algorithms- RMISO, MISO (Algorithm \ref{RMISO} with $\rho_n =0$), AdaGrad, MCSAG, SGD, Adam, and SGD-HB}
    \label{fig: binary classification time}
\end{figure}

\section{Auxiliary Lemmas}\label{sec: Auxiliary Lemmas}

\begin{lemma}\label{lem: L-smooth_surrogate}
Let $f : \R^p \to \R$ be a continuously differentiable function with $L$-Lipschitz continuous gradient. Then for all $\param, \param' \in \R^p$,
\begin{align}
    |f(\param') - f(\param) - \langle \nabla f(\param), \param' - \param \rangle | \leq \frac{L}{2}\norm{\param - \param'}^2.
\end{align}
\end{lemma}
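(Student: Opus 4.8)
The plan is to reduce the multivariate estimate to a one-dimensional one by restricting $f$ to the line segment joining $\param$ and $\param'$, applying the fundamental theorem of calculus, and then controlling the error with the Lipschitz hypothesis on $\nabla f$ together with Cauchy--Schwarz. This is the classical descent lemma and the argument is entirely routine; the only points needing mild care are the differentiability of the restricted function and the passage of the absolute value inside the integral.

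First I would define $\phi : [0,1] \to \R$ by $\phi(t) = f(\param + t(\param' - \param))$. Since $f$ is continuously differentiable, the chain rule shows $\phi$ is $C^1$ with $\phi'(t) = \langle \nabla f(\param + t(\param' - \param)), \param' - \param \rangle$. The fundamental theorem of calculus then gives
\begin{align}
    f(\param') - f(\param) = \phi(1) - \phi(0) = \int_0^1 \langle \nabla f(\param + t(\param' - \param)), \param' - \param \rangle \, dt.
\end{align}
Subtracting the linear term, which I rewrite as $\langle \nabla f(\param), \param' - \param \rangle = \int_0^1 \langle \nabla f(\param), \param' - \param \rangle \, dt$, the left-hand side of the claimed inequality becomes
\begin{align}
    f(\param') - f(\param) - \langle \nabla f(\param), \param' - \param \rangle = \int_0^1 \langle \nabla f(\param + t(\param' - \param)) - \nabla f(\param), \param' - \param \rangle \, dt.
\end{align}

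Next I would bound the integral. Taking absolute values and using the triangle inequality for integrals, then Cauchy--Schwarz on each integrand, and finally the $L$-Lipschitz continuity of $\nabla f$ (which yields $\norm{\nabla f(\param + t(\param' - \param)) - \nabla f(\param)} \leq Lt\norm{\param' - \param}$), the integrand is at most $Lt\norm{\param' - \param}^2$. Integrating $\int_0^1 Lt \, dt = L/2$ gives
\begin{align}
    \left| f(\param') - f(\param) - \langle \nabla f(\param), \param' - \param \rangle \right| \leq \int_0^1 Lt\norm{\param' - \param}^2 \, dt = \frac{L}{2}\norm{\param' - \param}^2,
\end{align}
which is the desired estimate. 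There is no genuine obstacle here: the result is standard, and the argument only relies on the one-variable fundamental theorem of calculus, the chain rule, Cauchy--Schwarz, and the Lipschitz hypothesis. The sole step warranting a line of justification is confirming that $\phi$ is continuously differentiable with the stated derivative, which is immediate from the assumed continuous differentiability of $f$.
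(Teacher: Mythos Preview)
Your proof is correct and is precisely the standard argument for the descent lemma. The paper itself does not give a proof at all; it simply cites Nesterov's \emph{Introductory Lectures on Convex Optimization}, Lemma~1.2.3, whose proof is essentially the integral argument you wrote out.
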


\begin{proof}
    This is a classical lemma. See \cite{nesterov2003} Lemma 1.2.3.
\end{proof}

\begin{lemma}\label{lem: L-smooth_grad_upperbound}
Let $f : \R^p \to [0, \infty)$ be a continuously differentiable function with $L$-Lipschitz continuous gradient. Then for all $\param \in \R^p$, it holds $\norm{\nabla f(\param)}^2 \leq 2Lf(\param)$. 
\end{lemma}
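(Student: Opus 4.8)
The plan is to deduce this from the descent inequality already recorded as Lemma~\ref{lem: L-smooth_surrogate}, combined with the crucial nonnegativity hypothesis $f \geq 0$. The standard strategy is to bound the value of $f$ at a well-chosen point $\param'$ from below by zero, and from above by the quadratic upper model at $\param$, and then optimize the choice of $\param'$.

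Concretely, I would first apply Lemma~\ref{lem: L-smooth_surrogate} to obtain, for every $\param, \param' \in \R^p$,
\begin{align}
    f(\param') \leq f(\param) + \langle \nabla f(\param), \param' - \param \rangle + \frac{L}{2}\norm{\param' - \param}^2.
\end{align}
Since $f$ takes values in $[0, \infty)$, the left-hand side is nonnegative, so the right-hand side is nonnegative for all $\param'$. The natural move is to minimize the right-hand side over $\param'$, which (for the quadratic model) is achieved at the gradient-step point $\param' = \param - \tfrac{1}{L}\nabla f(\param)$. Substituting $\param' - \param = -\tfrac{1}{L}\nabla f(\param)$ collapses the linear term to $-\tfrac{1}{L}\norm{\nabla f(\param)}^2$ and the quadratic term to $\tfrac{1}{2L}\norm{\nabla f(\param)}^2$.

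Carrying this out gives
\begin{align}
    0 \leq f(\param) - \frac{1}{L}\norm{\nabla f(\param)}^2 + \frac{1}{2L}\norm{\nabla f(\param)}^2 = f(\param) - \frac{1}{2L}\norm{\nabla f(\param)}^2,
\end{align}
and rearranging yields $\norm{\nabla f(\param)}^2 \leq 2Lf(\param)$, as claimed.

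There is no genuine obstacle here: the result is a classical consequence of $L$-smoothness together with a global lower bound on $f$. The only point worth flagging is that nonnegativity of $f$ is essential and is exactly what makes the optimized gradient-step substitution informative; without a lower bound on $f$ the inequality is false (e.g.\ an affine function has constant nonzero gradient but is unbounded below). In the application (Lemmas~\ref{lem:single gap convergence}, \ref{lem: gap sum dynamic prox reg}, \ref{lem: gap sum dr} and Proposition~\ref{prop: long_point_limit_stationary}) this lemma is invoked with $f = h_n^v$ or $f = \bar{h}_n$, which are nonnegative surrogate approximation errors by Definition~\ref{def: surrogates}, so the hypothesis is satisfied.
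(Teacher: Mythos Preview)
Your proof is correct and follows essentially the same approach as the paper: both apply the descent inequality from Lemma~\ref{lem: L-smooth_surrogate}, minimize the quadratic upper model over $\param'$ (the paper states the infimum, you plug in the explicit minimizer $\param' = \param - \tfrac{1}{L}\nabla f(\param)$), and then invoke $f \geq 0$ to rearrange. The arguments are the same up to presentation.
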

\begin{proof}
    Fix $\param \in \R^p$. By Lemma \ref{lem: L-smooth_surrogate} we have
    \begin{align}
       \inf_{\param' \in \R^p} f(\param') \leq \inf_{\param' \in \R^p} \left\{f(\param) + \langle \nabla f(\param), \param' - \param \rangle + \frac{L}{2} \norm{\param' - \param}^2\right\}.
    \end{align}
    It is easy to compute that 
    \begin{align}
        \inf_{\param' \in \R^p} \left\{f(\param) + \langle \nabla f(\param), \param' - \param \rangle + \frac{L}{2} \norm{\param' - \param}^2\right\} = f(\param) - \frac{1}{2L}\norm{\nabla f(\param)}_2^2.
    \end{align}
    Therefore
    \begin{align}
        \norm{\nabla f(\param)}_2^2 \leq 2L(f(\param) - \inf_{\param' \in \R^p}f(\param')) \leq 2Lf(\param)
    \end{align}
    since $\inf_{\param' \in \R^p}f(\param')\geq 0$.
\end{proof}

\section{Examples of Surrogate Functions}\label{sec: examples of Surrogate Functions}

\begin{example}[Proximal surrogates for $L$-smooth functions]
\normalfont
Suppose $f$ is continuously differentiable with $L$-Lipschitz continuous gradients. Then $f$ is $L$-weakly convex, meaning $\param \mapsto f(\param) + \frac{L}{2}\norm{\param}^2$ is convex (see \cite{lyu2023srmm} Lemma C.2). For each $\gamma \geq L$, the following function belongs to $\mathcal{S}_{L + \gamma}(f, \param^*)$:
\begin{align}
    g: \param \mapsto f(\param) + \frac{\gamma}{2}\norm{\param - \param^*}^2
\end{align}
Indeed, $g \geq f$, $g(\param^*) = f(\param^*)$, $\nabla h(\param^*) = 0$, and $\nabla h$ is $(L + \rho)$ Lipschitz. Minimizing the above function over $\Param$ is equivalent to applying a proximal mapping of $f$ where the resulting estimate is denoted $\text{prox}_{f/\rho}(\param^*)$ (see \cite{parikh2014prox, davis2019stochastic}).
    
\end{example}

\begin{example}[Prox-linear surrogates]
\label{ex: prox_linear surrogates}
\normalfont
If $f$ is $L$-smooth, then the following quadratic function $g$ belongs to $\mathcal{S}_{2L}(f, \param^*)$:
\begin{align}
    g : \param \mapsto f(\param^*) + \langle \nabla f(\param^*), \param - \param^* \rangle + \frac{L}{2}\norm{\param - \param^*}^2.
\end{align}
Indeed, $g(\param^*) = f(\param^*)$, $\nabla g(\param^*) = \nabla f(\param^*)$. Moreover,
\begin{align}
    \norm{\nabla h(\param) - \nabla h(\param')} = \norm{\nabla f(\param') - \nabla f(\param) +  L(\param - \param')} \leq 2L\norm{\param - \param'}
\end{align}
since $f$ is $L$-smooth.
\end{example}

\begin{example}[Prox-linear surrogates]
    \normalfont
    Suppose $f = f_1 + f_2$ where $f_1$ is differentiable with $L$-Lipschitz gradient and $f_2$ is convex over $\Param$. Then the following function $g$ belongs to $\mathcal{S}_{2L}(f, \param^*)$:
    \begin{align}
        g : \param \mapsto f_1(\param^*) + \langle \nabla f_1(\param^*), \param - \param^* \rangle + \frac{L}{2}\norm{\param - \param^*}^2 + f_2(\param).
    \end{align}
    Minimizing $g$ over $\Param$ amounts to performing a proximal gradient step \cite{beck2009interative, nesterov2013composite}.
\end{example}

\begin{example}[DC programming surrogates]
\normalfont
Suppose $f = f_1 + f_2$ where $f_1$ is convex and $f_2$ is concave and differentiable with $L_2$-Lipschitz gradient over $\Param$. One can also write $f = f_1 - (-f_2)$ which is the difference of convex (DC) functions $f_1$ and $-f_2$. Then the following function $g$ belongs to $\mathcal{S}_{2L}(f, \param^*)$:
\begin{align}
    g: \param \mapsto f_1(\param) + f_2(\param^*) + \langle \nabla f_2(\param^*), \param - \param^* \rangle.
\end{align}
Such surrogates are important in DC programming \cite{horst1999dcprogramming}.
    
\end{example}

\begin{example}[Variational Surrogates]
\label{ex: variational surrogates}
\normalfont
Let $f: \R^p 
\times \R^q \to \R$ be a two-block multi-convex function and let $\Param_1 \subseteq \R^p$ and $\Param_2 \subseteq \R^q$ be two convex sets. Define a function $f_* : \inf_{H \in \Param_2} f(\param, H)$. Then for each $\param^* \in \Param$, the following function 
\begin{align}
    g : \param \to f(\param, H^*), \quad H^* \in \argmin_{H \in \Param_2} f(\param^*, H)
\end{align}
is convex over $\Param_1$ and satisfies $g \geq f$ and $g(\param^*) = f(\param^*)$. Further, assume that 
\begin{itemize}
    \item[(i)] $\param \mapsto f(\param, H)$ is differentiable for all $H \in \Param_2$ and $\param \to \nabla_{\param}f(\param, H)$ is $L'$-Lipschitz for all $H \in \Param_2$;
    \item[(ii)] $H \mapsto \nabla_{\param}f(\param, H)$ is $L$-Lipschitz for all $\param \in \Param_1$;
\end{itemize}
Then $g$ belongs to $\mathcal{S}_{L}(f_*, \param^*)$ for some $L'' > 0$. When $f$ is jointly convex, then $f_{*}$ is also convex and we can choose $L'' = L$. 

\end{example}

\section{Matrix factorization algorithms}\label{sec: nmf algorithm statements}

Here we formally state the non-negative matrix factorization algorithms derived in Section \ref{sec: nmf}. They may be compared to the celebrated online nonnegative matrix factorization algorithm in \cite{marialONMF} which is a special case of SMM. 

With surrogates $g_n(W)$ as defined in Section \ref{sec: nmf}, one can show that minimizing the averaged surrogate $\bar{g}_n(W) = \frac{1}{|\mathcal{V}|} \sum_{v \in V} g_n^v(W)$ is equivalent to minimizing 
\begin{align}
\text{tr}(W A_n W^T) - 2\text{tr}(WB_n), 
\end{align}
with $A_n$ and $B_n$, 
defined recursively as 
\begin{align}
    A_n &:= A_{n-1} + \frac{1}{|\mathcal{V}|} \Big[ H_n^{v_n} (H_n^{v_n})^T - H_{n-1}^{v_n} (H_{n-1}^{v_n})^T \Big]\\
    B_n &:= B_{n-1} + \frac{1}{|\mathcal{V}|} \Big[H_n^{v_n}X_v^T - H_{n-1}^{v_n}X_v^T\Big].
\end{align}
With this, we state the full algorithms below. 

\begin{algorithm}[H]
    \small
    \caption{Distributed Matrix Factorization with Proximal Regularization}
    \label{DNMF}
    \begin{algorithmic}
    \STATE \textbf{Input:} $(X^v)_{v \in \mathcal{V}}$ (Data matrices in $\R^{p \times d}$); $W_0 \in \Param_W$ (initial dictionary); $N$ (number of iterations); $\rho > 0$ (regularization parameter)
    \STATE \textbf{Option: } $Regularization \in \{Dynamic, Constant\}$
    \STATE Compute initial codes $H_0^v \in \argmin_{H \in \Param_H^{\mathcal{V}}} \frac{1}{2}\norm{X^v - W_0H}_F^2 + \alpha \norm{H}_1$ for each $v \in \mathcal{V}$
    \FOR {$n = 1$ {\bfseries to} $N$}
    \STATE sample an index $v_n$
    \STATE update $H_n^v \in \argmin_{H \in \Param_H^{\mathcal{V}}} \frac{1}{2} \norm{X_v - W_{n-1}H}_F^2 + \alpha \norm{H}_1$; $H_n^v = H_{n-1}^v$ for $v \neq v_n$
    \STATE $A_n \leftarrow A_{n-1} + \frac{1}{|\mathcal{V}|} \left[ H_n^{v_n} (H_n^{v_n})^T - H_{n-1}^{v_n}(H_{n-1}^{v_n})^T \right]$
    \STATE $B_n \leftarrow B_{n-1} + \frac{1}{|\mathcal{V}|} \left[ H_n^{v_n} (X^{v_n})^T - H_{n-1}^{v_n} (X^{v_n})^T \right]$
    \IF{$Regularization=Dynamic$}
    \STATE $\rho_n \leftarrow \rho + \max_{v \in \mathcal{V}} (n - k^v(n))$
    \ELSE 
    \STATE $\rho_n \leftarrow \rho$
    \ENDIF
    \STATE update dictionary $W_n$:
    \begin{align}
        &W_n \in \argmin_{W \in \Param_W}\Big[ \text{tr}(WA_nW^T) - 2\text{tr}(WB_n) + \frac{\rho_n}{2}\norm{W - W_{n-1}}_F^2 \Big]
    \end{align}
    \ENDFOR
    \STATE \textbf{output:} $\param_N$
    \end{algorithmic}
\end{algorithm}

\begin{algorithm}[H]
    \small
    \caption{Distributed Matrix Factorization with Diminishing Radius}
    \label{DNMF_DR}
    \begin{algorithmic}
    \STATE \textbf{Input:} $(X^v)_{v \in \mathcal{V}}$ (Data matrices in $\R^{p \times d}$); $W_0 \in \Param_W$ (initial dictionary); $N$ (number of iterations); $(r_n)_{n \geq 1}$ (diminishing radius search constraints)
    \STATE Compute initial codes $H_0^v \in \argmin_{H \in \Param_H^{\mathcal{V}}} \frac{1}{2}\norm{X^v - W_0H}_F^2 + \alpha \norm{H}_1$ for each $v \in \mathcal{V}$
    \FOR {$n=1$ {\bfseries to} $N$}
    \STATE sample an index $v_n$
    \STATE update $H_n^v \in \argmin_{H \in \Param_H^{\mathcal{V}}} \frac{1}{2} \norm{X_v - W_{n-1}H}_F^2 + \alpha \norm{H}_1$; $H_n^v = H_{n-1}^v$ for $v \neq v_n$
    \STATE $A_n \leftarrow A_{n-1} + \frac{1}{|\mathcal{V}|} \left[ H_n^{v_n} (H_n^{v_n})^T - H_{n-1}^{v_n}(H_{n-1}^{v_n})^T \right]$
    \STATE $B_n \leftarrow B_{n-1} + \frac{1}{|\mathcal{V}|} \left[ H_n^{v_n} (X^{v_n})^T - H_{n-1}^{v_n} (X^{v_n})^T \right]$
    \STATE update dictionary $W_n$:
    \begin{align}
        &W_n \in \argmin_{W \in \Param_W \cap B_{r_n}(W_{n-1})}\Big[ \text{tr}(WA_nW^T) - 2\text{tr}(WB_n) \Big] 
    \end{align}
    \ENDFOR
    \STATE \textbf{output:} $\param_N$
    \end{algorithmic}
\end{algorithm}


\end{document}